\theoremstyle{plain}
    \newtheorem{theorem}{Theorem}[section]
    \newtheorem{lemma}[theorem]{Lemma}
    \newtheorem{corollary}[theorem]{Corollary}
    \newtheorem{proposition}[theorem]{Proposition}
 \theoremstyle{definition}
    \newtheorem{definition}[theorem]{Definition}
    \newtheorem{example}[theorem]{Example}
    \newtheorem{remark}[theorem]{Remark}
\theoremstyle{remark}
\numberwithin{equation}{section}
 \DeclareMathOperator{\Tr}{Tr}
 \DeclareMathOperator{\tr}{tr}
\DeclareMathOperator{\AS}{AS}
\DeclareMathOperator{\Id}{Id}
\DeclareMathOperator{\erf}{erf}
\DeclareMathOperator{\ind}{index}
\DeclareMathOperator{\End}{End}
\DeclareMathOperator{\Hom}{Hom}
\DeclareMathOperator{\sgn}{sgn}
\DeclareMathOperator{\Pen}{Pen}
\DeclareMathOperator{\Spin}{Spin}
 \DeclareMathOperator{\Ind}{Ind}
\DeclareMathOperator{\vol}{vol}
\begin{document}


\newcommand{\myemph}{\emph}

\newcommand{\Spinc}{\Spin^c}

    \newcommand{\R}{\mathbb{R}}
    \newcommand{\C}{\mathbb{C}}
    \newcommand{\N}{\mathbb{N}}
    \newcommand{\Z}{\mathbb{Z}}
    \newcommand{\Q}{\mathbb{Q}}
    \newcommand{\bT}{\mathbb{T}}
    \newcommand{\bP}{\mathbb{P}}

\newcommand{\g}{\mathfrak{g}}
\newcommand{\h}{\mathfrak{h}}
\newcommand{\p}{\mathfrak{p}}
\newcommand{\kg}{\mathfrak{g}}
\newcommand{\kt}{\mathfrak{t}}
\newcommand{\ka}{\mathfrak{a}}
\newcommand{\XX}{\mathfrak{X}}
\newcommand{\kh}{\mathfrak{h}}
\newcommand{\kp}{\mathfrak{p}}
\newcommand{\kk}{\mathfrak{k}}

\newcommand{\cA}{\mathcal{A}}
\newcommand{\cE}{\mathcal{E}}
\newcommand{\calL}{\mathcal{L}}
\newcommand{\calH}{\mathcal{H}}
\newcommand{\cO}{\mathcal{O}}
\newcommand{\cB}{\mathcal{B}}
\newcommand{\cK}{\mathcal{K}}
\newcommand{\cP}{\mathcal{P}}
\newcommand{\cN}{\mathcal{N}}
\newcommand{\calD}{\mathcal{D}}
\newcommand{\cC}{\mathcal{C}}
\newcommand{\calS}{\mathcal{S}}
\newcommand{\cM}{\mathcal{M}}
\newcommand{\cU}{\mathcal{U}}
\newcommand{\cS}{\mathcal{S}}

\newcommand{\cCM}{\cC}
\newcommand{\PM}{P}
\newcommand{\DM}{D}
\newcommand{\LM}{L}
\newcommand{\vM}{v}

\newcommand{\sumGam}{\textstyle{\sum_{\Gamma}} }

\newcommand{\sigDg}{\sigma^D_g}

\newcommand{\Bigwedge}{\textstyle{\bigwedge}}

\newcommand{\ii}{\sqrt{-1}}

\newcommand{\Ubar}{\overline{U}}

\newcommand{\beq}[1]{\begin{equation} \label{#1}}
\newcommand{\eeq}{\end{equation}}

\newcommand{\Todo}{\textbf{To do}}

\newcommand{\mattwo}[4]{
\left( \begin{array}{cc}
#1 & #2 \\ #3 & #4
\end{array}
\right)
}

\title{A Lefschetz fixed-point formula for noncompact fixed-point sets}

\author{Peter Hochs\footnote{Radboud University, \texttt{p.hochs@math.ru.nl}}}

\date{\today}

\maketitle

\begin{abstract}
We obtain an equivariant index theorem, or Lefschetz fixed-point formula, for isometries from complete Riemannian manifolds to themselves. The fixed-point set of such an isometry may be noncompact. We build on techniques developed by Roe. Key new ingredients are a localised functional on operators with bounded smooth kernels, and an algebra (reminiscent of Yu's localisation algebra) of ``asymptotically local'' operators, on which this functional has an asymptotic trace property.  As consequences, we show that some earlier indices used are special cases of the one we introduce here, and obtain an obstruction to positive scalar curvature.  
\end{abstract}

\tableofcontents

\section{Introduction}

\subsection*{Background}

A Lefschetz fixed-point theorem, or equivariant index theorem, expresses the graded trace of a geometric operator on cohomology, or on the kernel of an elliptic operator, in topological terms involving the fixed-point set of the map inducing such an operator. Important early results are the ones by Atiyah and Bott \cite{AB66, ABI} and by Atiyah, Singer and Segal \cite{Atiyah68, ASIII}. Since then, more than 100 papers on this subject have appeared, with various generalisations and applications. 

Our focus in this paper is a Lefschetz fixed point formula on noncompact manifolds. Such results have been obtained before, but, to our knowledge, in most of these results it is either assumed that the fixed-point set in question is compact, as for example in \cite{CL08, HW}, or one considers a proper group action with compact quotient, as for example in \cite{HW2, LR03, Wangwang}. The main result in this paper is a Lefschetz fixed-point theorem without such compactness assumptions.

The approach is to build on Roe's index theorem on noncompact maniofolds \cite{Roe88I, Roe88II}. He used functionals associated to certain exhaustions of manifolds by compact sets. Roe proved that these functionals are traces on certain smooth precursors of the Roe algebra, which allowed him to extract a number from a $K$-theoretic index, and obtain an index formula.

A key point in our equivariant setting, is that we need to use \emph{localised} versions of Roe's functionals, to obtain a nontrivial result. (More on this below.) This means that the relevant analogue of Roe's functional is no longer a trace. Our solution is to define a suitable algebra of \emph{asymptotically local} paths of operators, on which this functional has an asymptotic trace property. Constructing the right algebra for this purpose was the main technical challenge in constructing an index and obtaining our main result: the algebra should be large enough so that its $K$-theory contains indices of Dirac operators, and small enough for the asymptotic trace property to hold. 

The algebras of ``uniform operators'' in \cite{Roe88I} later led to the development of the Roe algebra, which has found applications in many places. We found it interesting that our search for an algebra on which our localised functional has an asymptotic trace property naturally led to a smooth analogue of Yu's localisation algebras.
See \cite{WillettYu} for an overview of index theory based on Roe algebras and localisation algebras.

\subsection*{Results}

Let $M$ be a complete Riemannian manifold of bounded geometry, and $\varphi\colon M \to M$ an isometry. Let $D$ be a Dirac-type operator on a vector bundle $S \to M$, and assume that  $D$ commutes with a lift $\Phi$ of $\varphi$ to $S$. Suppose that $D$ is odd with respect to a $\Phi$-invariant grading on $S$. Let $(M_j)_{j=1}^{\infty}$ be an increasing sequence of compact subsets of $M$ whose union is all of $M$. We use a neighbourhood $U$ of the fixed-point set $M^{\varphi}$ of $\varphi$, and a functional $I$ on bounded densities on $M$ associated to the sequence of averages
\[
\frac{1}{\vol(M_j \cap U)} \int_{M_j \cap U} \alpha,
\]
for bounded densities $\alpha$. For an operator $A$ with smooth, bounded Schwartz kernel $\kappa$, we define the trace-like number $\Tr^U_{\Phi}(A)$ as the value of $I$ on the density $m \mapsto \tr( \Phi \kappa(\varphi^{-1}(m), m)) dm$, where $dm$ is the Riemannian density.

A central result in this paper is the construction of an algebra $\cA_{-\infty}^L(S)^{\Phi}$ (see Definition \ref{def AL}) of paths of asymptotically local operators $t \mapsto A(t)$, on which
$\Tr^U_{\Phi}$ is a trace ``in the limit $ t\downarrow 0$'' (see Theorem \ref{thm asympt trace}). The Dirac operator $D$  has an index 
\[
\ind^L(D) \in K_0(\cA_{-\infty}^L(S)^{\Phi}).
\]
The asymptotic trace property of $\Tr^U_{\Phi}$  allows us to apply this functional to $\ind^L(D) $. Our main result is an equivariant index formula of the form
\beq{eq index intro}
\Tr^U_{\Phi}(\ind^L(D) ) = \lim_{j \to \infty} \frac{1}{\vol(M_j \cap U)} \int_{(M_j \cap U)^{\varphi}} \AS_{\Phi}(D),
\eeq
where $\AS_{\Phi}(D)$ is the classical Atiyah--Segal--Singer integrand for $D$, and we assume that the right hand side converges. See Theorem \ref{thm index} for the precise formulation, assumptions are that $\varphi$ preserves an orientation and lies in a compact group of isometries.

Both sides of \eqref{eq index intro} depend on $U$. This is deliberate, and
 it is important to allow $U$ to be different from $M$. Indeed, as $\AS_{\Phi}(D)$ is bounded in our setting, the right hand side of \eqref{eq index intro} for $U = M$ is bounded by a constant times
\[
\frac{\vol(M_j^{\varphi})}{\vol(M_j)}.
\]
In general, we expect this ratio to go to zero as $j \to \infty$ in many cases, because $M^{\varphi}$ has lower dimension than $M$ in general. Taking $U$ to be a tubular neighbourhood of $M^{\varphi}$, for example,  is more likely to lead to nontrivial results.

As consequences of \eqref{eq index intro}, we show that the indices from \cite{HW, Roe88I} are special cases of the index we consider here. In the case where $M^{\varphi}$ and $U$ are compact, we find that
\[
\Tr^U_{\Phi}(\ind^L(D) ) =\frac{1}{\vol( U)} \int_{M^{\varphi}} \AS_{\Phi}(D).
\]
And if $M$ is $\Spin$ and has uniformly positive scalar curvature, then the limit
\[
\lim_{j \to \infty} \frac{1}{\vol(M_j \cap U)} \int_{(M_j \cap U)^{\varphi}} \frac{\hat A(M^{\varphi})}{\det(1-\Phi e^{-R})^{1/2}}
\]
equals zero if it converges. (Here $R$ is the curvature of the connection on the normal bundle to $M^{\varphi}$ induced by the Levi--Civita connection.) The latter result is purely geometric, and its statement does not involve the algebras and functionals developed in this paper.

%
%

%

\subsection*{Outline}

In Section \ref{sec prelim}, we introduce the geometric setting we consider, and the functional $\Tr^U_{\Phi}$ that will be used to define a numerical index of Dirac operators. This functional has an asymptotic trace property on the algebra $\cA_{-\infty}^L(S)^{\Phi}$ introduced in Section \ref{sec asympt loc}. This trace property is Theorem \ref{thm asympt trace}, which is proved in Section \ref{sec trace}. We then use this trace property to obtain a numerical index from a $K$-theoretic index of Dirac operators, and obtain an index theorem, in Section \ref{sec index}.

\subsection*{Acknowledgements}

This research was partially supported by the Australian Research Council, through Discovery Project DP200100729, and by NWO ENW-M grant OCENW.M.21.176. 

\section{Preliminaries}\label{sec prelim}

We start by briefly stating a standard definition of the Dirac operators that we consider. Then we 
introduce a certain type of functional 
 on operators with bounded, smooth kernels (Definition \ref{def TrU}). In Section 
 \ref{sec trace}, we show that this functional has a trace-like property (Theorem \ref{thm asympt trace}) on an algebra introduced in Section \ref{sec asympt loc}. That will allow us to extract a numerical index (Definition \ref{def index}) from a $K$-theoretic index (Definition \ref{def index K}) and obtain an index theorem (Theorem \ref{thm index}).

\subsection{Dirac operators}

Let $M$ be a complete Riemannian manifold. Let $S \to M$ be a Hermitian vector bundle. 

Let $c \colon TM \to \End(S)$ be a vector bundle homomorphism such that for all $v \in TM$, 
\[
c(v)^2 = -\|v\|^2.
\]
Let $\nabla^{TM}$ be the Levi--Civita connection on $TM$. 
Let $\nabla$ be a Hermitian connection on $S$ such that for all vector fields $v$ and $w$ on $M$, and all $s  \in \Gamma^{\infty}(S)$,  
\[
\nabla_v(c(w) s) = c(w) \nabla_v s + c(\nabla^{TM}_v w)s.
\]
Consider the Dirac operator $D\colon \Gamma^{\infty}(S) \to \Gamma^{\infty}(S)$ defined as the composition
\[
D\colon \Gamma^{\infty}(S) \xrightarrow{\nabla} \Gamma^{\infty}(T^*M \otimes S) \xrightarrow{\cong} \Gamma^{\infty}(TM \otimes S)  \xrightarrow{c} \Gamma^{\infty}(S).
\]
The isomorphism in the middle is induced by the isomorphism $T^*M \cong TM$ defined by the Riemannian metric.

We assume that $S$ is $\Z/2\Z$-graded, that $\nabla$ preserves this grading, and that $c(v)$ reverses it, for all $v \in TM$.  
Then $D$ is odd with respect to the grading. 

We assume that $M$ and $S$ have \emph{bounded geometry}, i.e.\ $M$ has positive injectivity radius, and the curvature tensors of $\nabla^{TM}$ and $\nabla$, and all their covariant derivatives, are bounded.

Let $\varphi\colon M \to M$ be an isometry. Let $\Phi\colon S \to S$ be  a smooth map such that for all $m \in M$, the restriction $\Phi_m$ of $\Phi$ to the fibre $S_m$ at $m$ is a linear map from $S_m$  to  $S_{\varphi(m)}$ that preserves the metric and the grading. Then $\varphi$ and $\Phi$ define a map from $\Gamma^{\infty}(S)$ to itself by
\beq{eq Phi action}
(\Phi (s))(m) = \Phi(s(\varphi^{-1}(m)))
\eeq
for all 
 $s \in \Gamma^{\infty}(S)$ and $m \in M$. We suppose that $D$ commutes with this map.

\subsection{Exhaustions}\label{sec exhaust}


We denote the Riemannian distance on $M$ by $d$. 
Let $U$ be an open neighbourhood of the fixed-point set $M^{\varphi}$ such that
\begin{enumerate}
\item $U$ is invariant under $\varphi$; and
\item there is a $\delta>0$ such that $d(\varphi(m), m)\geq\delta$ for all $m \in M \setminus U$.
\end{enumerate}
Our main result, Theorem \ref{thm index} depends on the choice of $U$. This flexibility allows us to choose $U$ such that the main equality \eqref{eq index} yields interesting information. This is crucial, as the most straightforward choice $U = M$ leads to an index theorem that reduces  
 to the equality $0=0$ in many cases, see Remark \ref{rem U=M}. In fact, much of the work done in this paper is to allow the possibility to take $U$ different from $M$, and this is a key difference with \cite{Roe88I}. (Compare for example Theorem \ref{thm asympt trace} in this paper with Proposition 6.7 in \cite{Roe88I}.)

For every $j \in \N$, let $M_j \subset M$ be a compact subset. Suppose that
\begin{enumerate}
\item  $M_{j} \subset M_{j+1}$ for all $j \in \N$; and
\item $\bigcup_{j=1}^{\infty}M_j = M$.
\end{enumerate}
For every $j$, we write
\[
U_j := U \cap M_j.
\]
For $j \in \N$ and $r>0$, define
\[
\Pen_U^-(U_j, r) := \{ m \in U_j; d(m, M\setminus M_j) \geq r \}.
\]
\begin{definition}
The sequence $(M_j)_{j=1}^{\infty}$ is a \emph{$U$-regular exhaustion} of $M$ if for all $r>0$, 
\beq{eq phi reg exh}
\lim_{j \to \infty} \frac{\vol(U_j) - \vol(\Pen_U^-(U_j, r))}{\vol(U_j)}  = 0.
\eeq
\end{definition}

A $U$-regular exhaustion exists for example if volumes of balls in $M$ increase slower than exponentially in their radii, via an argument analogous to the proof of Proposition 6.2 in \cite{Roe88I}.

\begin{example}
Suppose that $M = \R^n$, with the Euclidean metric. Let $k \in \{1, \ldots, n\}$, and define $\varphi\colon M \to M$ by
\[
\varphi(x) = (-x_1, \ldots, -x_k, x_{k+1}, \ldots, x_n)
\]
for $x = (x_1, \ldots, x_n) \in \R^n$. Then $M^{\varphi} = \{0_{\R^k}\} \times \R^{n-k}$. Take $U = (-1,1)^k \times \R^{n-k}$. Then we make take $\delta = 2$ in the second assumption on $U$.

Take $M_j = [-j,j]^n$. Then $U_j = (-1,1)^k \times [-j,j]^{n-k}$. If $r>0$ and $j \geq r+1$, then
\[
\Pen_U^-(U_j, r) = (-1,1)^k \times [-j+r, j-r]^{n-k}.
\]
So for such $j$, 
\[
 \frac{\vol(U_j) - \vol(\Pen_U^-(U_j, r))}{\vol(U_j)} = 
1-\left( \frac{j-r}{j}\right)^{n-k}, 
\]
which goes to $0$ as $j \to \infty$. So  $(M_j)_{j=1}^{\infty}$ is a {$U$-regular exhaustion}. 
\end{example}


\subsection{A functional}\label{sec def trace}

Let $|\Omega_b|(M)$ be the Banach space of bounded, continuous densities on $M$. 
\begin{definition}\label{def associated}
A continuous linear functional $I \colon |\Omega_b|(M) \to \R$ is \emph{associated} with the exhaustion $(U_j)_{j=1}^{\infty}$ of $U$ if for all $\alpha \in |\Omega_b|(M)$,
\[
\liminf_{j \to \infty} \left|  \langle I, \alpha\rangle - \frac{1}{\vol(U_j)} \int_{U_j} \alpha \right| = 0.
\] 
\end{definition}
\begin{lemma}
There exists a functional associated to $(U_j)_{j=1}^{\infty}$. 
\end{lemma}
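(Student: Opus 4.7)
The plan is to build $I$ as an ultrafilter limit of the natural averaging functionals. First I would define, for $j$ large enough that $\vol(U_j)>0$ (which holds eventually, since $U$ is an open neighbourhood of $M^{\varphi}$ and $M_j$ exhausts $M$),
\[
I_j\colon |\Omega_b|(M)\to\R,\qquad I_j(\alpha) = \frac{1}{\vol(U_j)} \int_{U_j} \alpha.
\]
Each $I_j$ is linear, and the estimate $|I_j(\alpha)|\le \|\alpha\|_\infty$ shows $I_j$ is continuous with operator norm at most $1$. So for every fixed $\alpha \in |\Omega_b|(M)$, the sequence $(I_j(\alpha))_j$ is bounded in $\R$.

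Next I would fix a free ultrafilter $\omega$ on $\N$ and set
\[
\langle I,\alpha\rangle := \lim_{j\to\omega} I_j(\alpha),
\]
which exists because bounded real sequences always have ultrafilter limits. Linearity of $I$ follows from linearity of each $I_j$ and the fact that ultrafilter limits respect finite sums and scalar multiples, while continuity follows from the uniform bound $|\langle I,\alpha\rangle|\le\|\alpha\|_\infty$.

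Finally, to verify the defining property, fix $\alpha$ and $\varepsilon>0$. By the definition of ultrafilter convergence, the set
\[
A_\varepsilon := \bigl\{ j\in\N \;:\; |I_j(\alpha) - \langle I,\alpha\rangle| < \varepsilon \bigr\}
\]
lies in $\omega$; since $\omega$ is free, $A_\varepsilon$ must be infinite. Therefore arbitrarily large $j$ satisfy $|I_j(\alpha)-\langle I,\alpha\rangle|<\varepsilon$, which gives $\liminf_{j\to\infty}|\langle I,\alpha\rangle - I_j(\alpha)|=0$, as required.

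There is really no hard step here; the construction is a standard existence argument and the only content is choosing the right notion of generalised limit. One could equivalently invoke Banach--Alaoglu to pass to a weak-$*$ cluster point of $(I_j)$ in the unit ball of $|\Omega_b|(M)^*$, but the ultrafilter construction is more transparent and avoids having to discuss subnets.
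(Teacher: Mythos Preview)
Your argument is correct and is exactly the kind of construction the paper has in mind: the paper's ``proof'' simply cites Proposition 6.4 of \cite{Roe88I}, whose proof uses the same generalised-limit idea (there phrased via Hahn--Banach/weak-$*$ compactness rather than an explicit ultrafilter, but with identical content). Your observation that one only needs $\vol(U_j)>0$ eventually, and your verification of the $\liminf$ condition from the fact that a free ultrafilter contains no cofinite sets' complements, are both in order.
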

\begin{proof}
This is analogous to the proof of Proposition 6.4 in \cite{Roe88I}.
\end{proof}

\begin{example}\label{ex associated}
Let $M = \R$, and let $\varphi$ be the identity map. Then we must take $U = M$. Let $U_j = M_j = [-2^{j+1}, 2^{j+1}]$. Let $\chi \in C^{\infty}_c(\R)$ be supported in $[0,1]$, and such that $\int_{0}^1 \chi(x)\, dx = 1$. Define the bounded function $\zeta \in C^{\infty}(\R)$ by
\[
\zeta(x) := \sum_{j=0}^{\infty} (-1)^j \chi(2^{-j}x - 1).
\]
Then the term corresponding to $j$ is supported in $[2^{j}, 2^{j+1}]$, and its integral equals $(-1)^j 2^j$. It follows from the truncated geometric series that
\[
\frac{1}{\vol(U_j)} \int_{U_j} \zeta(x) \, dx = \frac{1 }{3 \cdot 2^{j+2}}  + \frac{(-1)^j}{6}.
\]
This does not converge as $j \to \infty$. But
\[
\liminf_{j \to \infty} \left| \frac{1}{6}  - \frac{1}{\vol(U_j)} \int_{U_j} \zeta(x) \, dx\right| = 0,
\]
and also
\[
\liminf_{j \to \infty} \left| \frac{-1}{6}  - \frac{1}{\vol(U_j)} \int_{U_j} \zeta(x) \, dx\right| = 0.
\]
This suggests that there exist different functionals associated to $(U_j)_{j=1}^{\infty}$, taking the values $\pm 1/6$ on $\zeta dx$. 
\end{example}

We denote the Riemannian density on $M$ by $dm$. 
\begin{definition}\label{def TrU}
Let $I$ be a functional associated to  $(U_j)_{j=1}^{\infty}$. 
Let $A\colon \Gamma^{\infty}_c(S) \to \Gamma^{\infty}(S)$ be a linear operator with a bounded, smooth Schwartz kernel $\kappa$. Define $\alpha \in |\Omega_b|(M)$ by
\[
\alpha(m) := \tr \bigl(\Phi_{\varphi^{-1}(m)} \circ \kappa(\varphi^{-1}(m), m) \bigr) dm.
\]
Then we define
\beq{eq def TrU}
\Tr^U_{\Phi}(A) := \langle I, \alpha \rangle.
\eeq
\end{definition}
\begin{remark}
The number \eqref{eq def TrU} depends on the functional $I$, which is not unique in general. 
\end{remark}

%
%
%
%
%

\section{Asymptotically local operators}\label{sec asympt loc}

We will see in Theorem \ref{thm asympt trace} that the number \eqref{eq def TrU} has an ``asymptotic trace property'' on a suitable algebra of operators. In this section, we construct this algebra. We then define an index of $D$ in the $K$-theory of that algebra.

We do not consider an exhaustion or a functional associated to it until Section \ref{sec trace}.

\subsection{Uniform operators}

We briefly review some definitions from Section 5 of \cite{Roe88I}. 

For $k \in \Z_{\geq 0}$, consider the norm $\|\cdot \|_{W^k}$ on $\Gamma^{\infty}_c(S)$ defined by
\[
\|s\|_{W^k}^2 = \sum_{j=0}^k \|D^j s\|_{L^2(S)}^2,
\]
for $s \in \Gamma_c^{\infty}(S)$. Let $W^k(S)$ be the completion of $\Gamma^{\infty}_c(S)$ in this norm, and let $W^{-k}(S)$ be the continuous dual of $W^k(S)$.

For a subset $X \subset M$, and $s \in W^k(S)$, we define
\[
\|s\|_{W^k, X} := \inf \bigl\{\|s'\|_{W^k}; \text{$s' \in W^k(S)$ is equal to $s$ in a neighbourhood of $X$} \bigr\}.
\]
For a subset $X \subset M$ and $r\geq 0$, we write
\[
\Pen^+(X, r) :=
\overline{\{m \in M; d(m,X) \leq r\}}.
\]
\begin{definition}\label{def uniform}
Let $k \in \Z$. 
An operator $A\colon \Gamma_c^{\infty}(S)\to \Gamma^{\infty}(S)$ is \emph{uniform of order at most $k$} if
\begin{enumerate}
\item for all $l \in \Z$, $A$ extends to a bounded operator from $W^l(S)$ to $W^{l-k}(S)$, also denoted by $A$; and
\item for all $l \in \Z$, there is a function $\mu_l\colon (0, \infty) \to (0, \infty)$ such that
\begin{enumerate}
\item $\lim_{r \to \infty}\mu_l(r) = \infty$; and
\item for all compact subsets $K \subset M$, all $r>0$ and all $s \in W^l(S)$ supported in $K$, 
\[
\| As \|_{W^{l-k}, M \setminus \Pen^+(K, r)} \leq \mu_l(r) \|s\|_{W^k}.
\]
\end{enumerate}
\end{enumerate}
We denote the vector space of all such operators by $\cU_k(S)$. Furthermore, we write
\[
\begin{split}
\cU(S) &:= \bigcup_{k \in \Z} \cU_k(S);\\
\cU_{-\infty}(S) &:= \bigcap_{k \in \Z} \cU_k(S).
\end{split}
\]
\end{definition}

The following fact implies that $\cU(S)$ is an algebra and $\cU_{-\infty}(S)$ is an ideal in $\cU(S)$. 
\begin{lemma}\label{lem Uk alg}
If $A \in \cU_k(S)$ and $B \in \cU_l(S)$, then $AB \in \cU_{k+l}(S)$.
\end{lemma}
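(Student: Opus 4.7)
The plan is to check the two defining properties of $\cU_{k+l}(S)$ for $AB$ in turn. The boundedness part is immediate: for each $m \in \Z$, the operator $B$ extends boundedly from $W^m(S)$ to $W^{m-l}(S)$ and $A$ extends boundedly from $W^{m-l}(S)$ to $W^{m-l-k}(S)$, so their composition $AB$ extends boundedly from $W^m(S)$ to $W^{m-(k+l)}(S)$. So essentially all of the work is in constructing the decay function $\nu_m$ for $AB$ out of the decay functions $\mu^A$ and $\mu^B$.

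The key idea is a standard ``splitting'' argument. Fix compact $K \subset M$, fix $r>0$, and let $s \in W^m(S)$ be supported in $K$. Choose a smooth cutoff $\chi_r$ on $M$ with $\chi_r \equiv 1$ on $\Pen^+(K, r/2)$, $\chi_r \equiv 0$ on $M \setminus \Pen^+(K, r/2 + 1)$, and derivatives bounded by a constant independent of $K$ and $r$. Decompose
\[
ABs = A(\chi_r Bs) + A((1-\chi_r)Bs).
\]
For the first term, $\chi_r Bs$ is supported in the compact set $K' := \Pen^+(K, r/2+1)$ (compact by completeness and Hopf--Rinow), and the triangle inequality gives $\Pen^+(K', r/2 - 1) \subset \Pen^+(K, r)$. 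Hence, applying the defining inequality for $A$ at scale $r/2 - 1$ and using that multiplication by $\chi_r$ is bounded on $W^{m-l}(S)$ with constant uniform in $r$, I get
\[
\|A(\chi_r Bs)\|_{W^{m-k-l}, M\setminus \Pen^+(K,r)} \leq C \mu^A_{m-l}(r/2-1)\,\|B\|_{W^m \to W^{m-l}}\,\|s\|_{W^m}.
\]

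For the second term, the key point is that $(1-\chi_r)$ vanishes on a neighbourhood of $\Pen^+(K, r/2 - 1)$, so for any $s' \in W^{m-l}(S)$ that agrees with $Bs$ in a neighbourhood of $M \setminus \Pen^+(K, r/2 - 1)$ one has $(1-\chi_r)s' = (1-\chi_r)Bs$; taking the infimum over such $s'$ yields
\[
\|(1-\chi_r)Bs\|_{W^{m-l}} \leq C\,\|Bs\|_{W^{m-l}, M\setminus\Pen^+(K, r/2 - 1)} \leq C\,\mu^B_m(r/2-1)\,\|s\|_{W^m},
\]
where the last step uses the defining inequality for $B$. Composing with the bounded operator $A\colon W^{m-l}(S) \to W^{m-k-l}(S)$ bounds the second piece and, adding both contributions, I set
\[
\mu^{AB}_m(r) := C\,\|B\|\,\mu^A_{m-l}(r/2-1) + C'\,\|A\|\,\mu^B_m(r/2-1),
\]
which tends to $0$ as $r \to \infty$ as required.

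The main obstacle I expect is the bookkeeping around the localised Sobolev norms $\|\cdot\|_{W^{j},X}$ for negative $j$: multiplication by a smooth cutoff and ``restriction to a set'' have to be interpreted via the inf-over-extensions definition, and the inequality $\|fs\|_{W^j} \leq C_f\,\|s\|_{W^j, \supp f}$ needs a brief justification from the definition. Once that book-keeping is in place (and the choice of $\chi_r$ is made with uniform bounds on derivatives, using bounded geometry to choose a good cover if needed), the two estimates above combine to exhibit $\mu^{AB}_m$ and finish the proof.
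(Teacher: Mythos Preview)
Your proposal is correct and follows essentially the same approach as the paper, which does not give its own argument but cites Proposition~5.2 in \cite{Roe88I}; your cutoff-at-$r/2$ splitting is precisely Roe's proof. The explicit $\mu^{AB}$ that Roe obtains (quoted in the proof of Lemma~\ref{lem UkL alg}) carries an additional cross term $\mu^A(r/2)\mu^B(r/2)$, but that reflects only a slightly different packaging of the estimate for the second piece, not a different strategy.
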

This is Proposition 5.2 in \cite{Roe88I}.

\begin{proposition}\label{prop U-infty bdd}
Every operator in $\cU_{-\infty}(S)$ has a bounded, smooth Schwartz kernel.
\end{proposition}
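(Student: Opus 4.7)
The plan is to leverage the defining property of $\cU_{-\infty}(S)$ — that $A$ extends to a continuous operator $W^l(S) \to W^{l'}(S)$ for every pair $l, l' \in \Z$ — together with uniform Sobolev embedding on manifolds of bounded geometry. Intuitively, $A$ is infinitely smoothing, uniformly across $M$, and bounded geometry translates Sobolev regularity into $C^\infty$-regularity with uniform control.

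The first ingredient I would establish is a uniform Sobolev embedding: because $M$ and $S$ have bounded geometry, one can cover $M$ by normal coordinate balls of a fixed positive radius on which the standard Euclidean Sobolev estimates hold with constants independent of the centre. This should yield a continuous inclusion $W^k(S) \hookrightarrow C^j_b(S)$ whenever $k > j + (\dim M)/2$, where $C^j_b(S)$ denotes $j$ times continuously differentiable sections with uniformly bounded derivatives. Dualising the case $j=0$, one also obtains a continuous inclusion $L^1(S) \hookrightarrow W^{-k}(S)$ for $k > (\dim M)/2$.

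Given $A \in \cU_{-\infty}(S)$, I would pick $k > (\dim M)/2$ and an arbitrary $j \in \Z_{\geq 0}$. Then $A$ extends to a bounded operator $W^{-k}(S) \to W^{k+j}(S)$; pre- and post-composing with the Sobolev embeddings shows that $A \colon L^1(S) \to C^j_b(S)$ is bounded. In particular, for each $m \in M$ and each $v \in S_m$, the functional $s \mapsto \langle (As)(m), v\rangle_{S_m}$ is continuous on $L^1(S)$, so by Riesz representation $A$ admits a Schwartz kernel $\kappa$ with $\|\kappa(m, m')\|$ bounded uniformly in $m, m' \in M$, and with derivatives in the first variable of order up to $j$ bounded uniformly, for every $j$.

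To obtain smoothness in the second variable, I would apply the same argument to the compositions $AD^k$ for $k \in \Z_{\geq 0}$. Since $D$ is a first-order uniform differential operator, $D \in \cU_1(S)$, and Lemma \ref{lem Uk alg} gives $AD^k \in \cU_{-\infty}(S)$; by the previous step, the kernel of $AD^k$ is bounded and smooth in the first variable with uniform bounds. Integration by parts identifies this kernel with the result of applying the formal transpose $(D^t)^k$ to $\kappa$ in the second variable, so $\kappa$ satisfies analogous uniform estimates after arbitrary differentiations in $m'$. Local elliptic regularity for $D$, applied in the $m'$ variable in normal coordinates with all constants uniform by bounded geometry, then upgrades these to pointwise joint smoothness and uniform $C^\infty$-bounds for $\kappa$. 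The main technical obstacle is verifying the uniformity of all these constants in $m, m' \in M$; bounded geometry makes this routine in the manner of Section 5 of \cite{Roe88I}, but one must track the uniformity of each Sobolev and elliptic estimate through the normal-coordinate cover.
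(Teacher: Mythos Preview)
Your proposal is correct and is essentially the argument Roe gives in Proposition~5.4 of \cite{Roe88I}, which the paper simply cites rather than reproducing. The uniform Sobolev embedding on bounded-geometry manifolds, combined with the infinite-smoothing property $A\colon W^{-k}\to W^{k'}$ and the $AD^k$/elliptic-regularity trick for the second variable, is exactly the standard route.
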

This is the first part of Proposition 5.4 in \cite{Roe88I}. 

%

The algebras $\cU(S)$ and $\cU_{-\infty}(S)$ contain the subalgebras $\cU(S)^{\Phi} \subset \cU(S)$ and $\cU_{-\infty}(S)^{\Phi}\subset \cU_{-\infty}(S)$ of  operators that commute with the map \eqref{eq Phi action} on sections of $S$ defined by $\Phi$ and $\varphi$. Note that $\cU_{-\infty}(S)^{\Phi}$ is an ideal in $\cU(S)^{\Phi}$.

\subsection{Asymptotically local operators}


\begin{definition}\label{def asympt local}
A path of operators $A\colon (0, \infty) \to \cU_{k}(S)$ is \emph{asymptotically local} of order at most $k$ if for all $l \in \Z$,
\begin{itemize}
\item
 there are $C_l, a_l>0$  such that for all $t \in (0,1]$, the norm of $A(t)$ as an operator from $W^l(S)$ to $W^{l-k}(S)$ satisfies
\beq{eq unif norm A}
\|A(t)\| \leq C_l t^{-a_l}, 
\eeq
and 
\item
there is a function $\mu_{l}\colon (0, \infty) \times (0, \infty) \to (0, \infty)$ such that
\begin{enumerate}
\item for all $t>0$, $\lim_{r \to \infty} \mu_l(r, t) = 0$;
\item for all $r>0$ and $a\geq 0$, the function $t\mapsto t^{-a} \mu_l(r,t)$ is bounded and 
%
non-decreasing on $(0,1]$;
\item for all compact subsets $K \subset M$, all $r,t>0$ and all $s \in W^l(S)$ supported in $K$, 
\[
\| A(t) s\|_{W^{l-k}, M \setminus \Pen^+(K, r)} \leq \mu_l(r,t) \|s\|_{W^l}.
\]
\end{enumerate}
\end{itemize}
We denote the vector space of all such operators by $\cU_k^L(S)$. We write
\[
\cU_{-\infty}^L(S) := \bigcap_{k \in \Z}\cU_k^L(S).
\]
\end{definition}

If $A \in \cU_k^L(S)$ and $B \in \cU_l^L(S)$, then we define the path $AB\colon (0, \infty) \to \cU_{k+l}(S)$ by $(AB)(t) = A(t)B(t)$ for $t>0$. Here we use Lemma \ref{lem Uk alg}.
\begin{lemma}\label{lem UkL alg}
For all $A \in \cU_k^L(S)$ and $B \in \cU_l^L(S)$, the path $AB$ lies in $ \cU_{k+l}^L(S)$.
\end{lemma}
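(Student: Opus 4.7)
The plan is to verify, for each Sobolev index $m \in \Z$, the two conditions in Definition \ref{def asympt local} for $(AB)(t) := A(t)B(t)$. That $(AB)(t) \in \cU_{k+l}(S)$ for every $t > 0$ follows from Lemma \ref{lem Uk alg}. The operator-norm bound \eqref{eq unif norm A} is immediate: factoring $W^m(S) \xrightarrow{B(t)} W^{m-l}(S) \xrightarrow{A(t)} W^{m-k-l}(S)$ and applying \eqref{eq unif norm A} to $A$ and $B$ separately gives
\[
\|A(t) B(t)\|_{W^m \to W^{m-k-l}} \leq C^A_{m-l} C^B_m \, t^{-a^A_{m-l} - a^B_m}
\]
for $t \in (0, 1]$, which has the required shape.

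The substance of the proof is the construction of the locality function $\mu^{AB}_m$. For $s \in W^m(S)$ supported in a compact set $K$ and $r, t > 0$, I would exploit the infimum definition of $\|\cdot\|_{W^{m-l}, M \setminus \Pen^+(K, r/2)}$ to split $B(t) s$ without introducing a smooth cutoff. Given $\varepsilon > 0$, choose $u \in W^{m-l}(S)$ with $u = B(t) s$ on a neighbourhood of $M \setminus \Pen^+(K, r/2)$ and $\|u\|_{W^{m-l}} \leq \mu^B_m(r/2, t) \|s\|_{W^m} + \varepsilon$. Then $B(t) s - u$ is supported in $\Pen^+(K, r/2)$, which is compact by completeness and bounded geometry of $M$. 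Write
\[
A(t) B(t) s = A(t) u + A(t) \bigl(B(t) s - u\bigr),
\]
bound the first summand in full $W^{m-k-l}$ norm via the norm bound on $A(t)$, and bound the second via the locality property of $A(t)$ at Sobolev index $m-l$, using the triangle-inequality equality $\Pen^+(\Pen^+(K, r/2), r/2) = \Pen^+(K, r)$ valid on a complete Riemannian manifold. Sending $\varepsilon \downarrow 0$ and adding the two estimates produces a valid choice
\[
\mu^{AB}_m(r, t) := C^A_{m-l} t^{-a^A_{m-l}} \mu^B_m(r/2, t) + \mu^A_{m-l}(r/2, t) \bigl( C^B_m t^{-a^B_m} + \mu^B_m(r/2, t) \bigr).
\]

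It remains to verify conditions (1) and (2) of Definition \ref{def asympt local} for $\mu^{AB}_m$. Condition (1) is immediate: both $\mu^A_{m-l}(r/2, t)$ and $\mu^B_m(r/2, t)$ vanish as $r \to \infty$ with $t$ fixed. For condition (2), each summand of $t^{-a} \mu^{AB}_m(r, t)$ is of the form $t^{-a_0} \mu^B_m(r/2, t)$, $t^{-a_1} \mu^A_{m-l}(r/2, t)$, or a product of two such factors, and is therefore bounded and non-decreasing on $(0,1]$ by condition (2) for $\mu^A$ and $\mu^B$, together with the fact that products of non-negative bounded non-decreasing functions inherit both properties. The main obstacle I anticipate is the bookkeeping of $t$-exponents, ensuring that the full-norm blow-up of whichever operator is applied without exploiting locality is absorbed by the locality function of the other; the infimum-norm splitting sidesteps any need to differentiate a cutoff, which is what keeps the argument clean.
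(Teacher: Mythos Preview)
Your approach is essentially the paper's: split at radius $r/2$, use the locality bound for one factor and the full operator norm for the other, then check conditions (1) and (2) of Definition \ref{def asympt local}. Your infimum-based splitting is in fact slightly cleaner than the cutoff argument from Roe's Proposition 5.2 that the paper invokes for condition (2c), and it explains why you get no factors of $2$ where the paper does.

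There is one genuine, though easily repaired, gap. Condition (2c) in Definition \ref{def asympt local} must hold for \emph{all} $t>0$, not only for $t\in(0,1]$. Your derivation actually produces
\[
\|A(t)B(t)s\|_{W^{m-k-l},\, M\setminus\Pen^+(K,r)} \leq \Bigl[\|A(t)\|\,\mu^B_m(r/2,t) + \mu^A_{m-l}(r/2,t)\bigl(\|B(t)\| + \mu^B_m(r/2,t)\bigr)\Bigr]\|s\|_{W^m},
\]
and the replacement of $\|A(t)\|$ by $C^A_{m-l}t^{-a^A_{m-l}}$ (and likewise for $B$) is an upper bound only on $(0,1]$; for $t>1$ you have no control of $\|A(t)\|$ by $t^{-a^A_{m-l}}$, so your stated $\mu^{AB}_m$ need not satisfy (2c) there. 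The paper's remedy is exactly this: set $F_A(t):=\max\{\|A(t)\|,\,C^A_{m-l}t^{-a^A_{m-l}}\}$ and $F_B(t):=\max\{\|B(t)\|,\,C^B_m t^{-a^B_m}\}$, and use $F_A,F_B$ in the formula for $\mu^{AB}_m$. Then $F_A(t)\geq\|A(t)\|$ for all $t$ gives (2c), while $F_A(t)=C^A_{m-l}t^{-a^A_{m-l}}$ on $(0,1]$ gives (2b) by precisely the argument you wrote.

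One minor aside: you assert $\Pen^+(\Pen^+(K,r/2),r/2)=\Pen^+(K,r)$. Only the inclusion $\subset$ is guaranteed in general, but that is the direction your estimate needs.
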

\begin{proof}
The condition involving estimates of the form \eqref{eq unif norm A}
 is clearly preserved under composition.

Let $n \in \Z$.
Let $\mu_n^B$ be as in Definition \ref{def asympt local}, applied to $B$ as an operator from $W^n(S)$ to $W^{n-l}(S)$.
 Let $C^B_n, a^B_n>0$ be such that for all $t \in (0,1]$, 
 the norm of $B(t)$ as an operator  from $W^{n}(S)$ to $W^{n-l}(S)$ is at most $C^B_n t^{-a^B_n}$. 
Similarly,  
let $\mu_{n-l}^A$ be as in Definition \ref{def asympt local}, applied to $A$ as an operator from $W^{n-l}(S)$ to $W^{n-l-k}(S)$. 
 Let $C^A_{n-l}, a^A_{n-l}>0$ be such that for all $t \in (0,1]$, 
 the norm of $A(t)$ as an operator  from $W^{n-l}(S)$ to $W^{n-l-k}(S)$ is at most $C^A_{n-l} t^{-a^A_{n-l}}$. 
For $t>0$, define
\[
\begin{split}
F_A(t)&:= \max\{ \|A(t)\|, C^A_{n-l} t^{-a^A_{n-l}}\};\\
F_B(t)&:= \max\{ \|B(t)\|, C^B_{n} t^{-a^B_{n}}\}.
\end{split}
\]
Here $ \|A(t)\|$ is the norm of $A(t)$ as an operator  from $W^{n-l}(S)$ to $W^{n-l-k}(S)$, and $\|B(t)\|$ is the norm of $B(t)$ as an operator  from $W^{n}(S)$ to $W^{n-l}(S)$. (Note that if $t \in (0,1]$, then $F_A(t) =  C^A_{n-l} t^{-a^A_{n-l}}$ and $F_B(t)= C^B_{n} t^{-a^B_{n}}$.)

Define $\mu_{n}^{AB}\colon (0, \infty) \times (0, \infty) \to (0, \infty)$ by
\[
\mu_{n}^{AB}(r,t) := 2 F_B(t)\mu_{n-l}^A(r/2, t) \\
+ \mu_n^B(r/2, t)( F_A(t)+ 2\mu_{n-l}^A(r/2, t)),
\]
for $r,t>0$. It follows from the definitions that this function satisfies the first two conditions in Definition \ref{def asympt local}. In the proof  of Proposition 5.2 in \cite{Roe88I}, it is shown that a version of the function $\mu_{n}^{AB}$ with $F_A(t)$ replaced by $\|A(t)\|$ and $F_B(t)$ replaced by $\|B(t)\|$
 satisfies the third condition on $\mu_n$ in Definition \ref{def asympt local} for the operators $A(t)B(t)\colon W^n(S) \to W^{n-l-k}(S)$. This implies that the function $\mu_{n}^{AB}$ also has this property.
 %
%
%
%
\end{proof}


 If $r>0$ and $m \in M$, then we write $B(m,r) \subset M$ for the open ball with centre $m$ and radius $r$.
\begin{theorem}\label{thm int outside ball}
For all $A \in \cU_{-\infty}^L(S)$, there is a function $v\colon (0, \infty) \times (0, \infty) \to (0, \infty)$ such that 
\begin{enumerate}
\item for all $t>0$, $\lim_{r \to \infty}v(r, t) = 0$;
\item for all $r>0$ and $a\geq 0$, the function $t\mapsto t^{-a} v(r,t)$ is bounded and 
non-decreasing on $(0,1]$;
%
\item
for all $m \in M$, and $r,t>0$, 
if $\kappa_t$ is the Schwartz kernel of $A(t)$, then
\[
\begin{split}
\int_{M \setminus B(m, r)} \|{\kappa_t(m,m')}\|^2\, dm' &\leq v(r,t); \quad \text{and}\\
\int_{M \setminus B(m, r)} \|{\kappa_t(m',m)}\|^2\, dm' &\leq v(r,t).
\end{split}
\]
\end{enumerate}
\end{theorem}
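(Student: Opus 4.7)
The plan is to extract both kernel estimates from the asymptotic locality of $A$ in Definition \ref{def asympt local}, combined with Sobolev embedding on bounded-geometry manifolds. The two bounds are, in a precise sense, dual: the first treats $\kappa_t(m,\cdot)$ as the result of applying $A(t)^\top$ to $\delta_m$, while the second treats $\kappa_t(\cdot,m)v$ as $A(t)(\delta_m v)$. In both directions, I want to exploit that $A \in \cU_{-\infty}^L(S)$ lies in $\cU_{-N}^L(S)$ for every $N$, so the functions $\mu_l$ of Definition \ref{def asympt local} are available for arbitrarily negative operator orders and arbitrary input Sobolev indices.

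For the first estimate, duality (reducing to an approximation of an optimal $L^2$ function by smooth compactly supported ones) gives
\[
\left(\int_{M \setminus B(m,r)} \|\kappa_t(m,m')\|^2\,dm'\right)^{1/2} = \sup_{\psi} |(A(t)\psi)(m)|,
\]
where the supremum is over $\psi \in \Gamma_c^\infty(S)$ with $\supp\psi \subset M \setminus B(m,r)$ and $\|\psi\|_{L^2} \leq 1$. Applying Definition \ref{def asympt local} to $A$ viewed as an element of $\cU_{-N}^L(S)$ with input order $l=0$ provides a function $\mu$ such that
\[
\|A(t)\psi\|_{W^N, M \setminus \Pen^+(\supp\psi, r/2)} \leq \mu(r/2,t)\|\psi\|_{L^2}.
\]
Since $d(m,\supp\psi) \geq r > r/2$, the point $m$ lies in $M \setminus \Pen^+(\supp\psi,r/2)$. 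By the uniform Sobolev embedding $W^N(S) \hookrightarrow C^0_b(S)$ for $N > (\dim M)/2$ on bounded-geometry manifolds, there is $C_N$ with $\|s'(m)\|_{S_m} \leq C_N \|s'\|_{W^N}$ for every $s' \in W^N(S)$. Applying this to any $s'$ agreeing with $A(t)\psi$ in a neighbourhood of $m$ and taking the infimum defining the Roe norm yields $|(A(t)\psi)(m)| \leq C_N \mu(r/2,t)$; squaring and taking the supremum over $\psi$ gives the first estimate.

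For the second estimate, for any unit $v \in S_m$ the distribution $\delta_m v$ lies in $W^{-l_0}(S)$ with $\|\delta_m v\|_{W^{-l_0}} \leq C'_{l_0}$ uniformly in $m$ (by dual Sobolev), and $A(t)(\delta_m v)$ represents the section $m' \mapsto \kappa_t(m',m)v$. Applying Definition \ref{def asympt local} with input order $l = -l_0$, operator order $-N$ for $N > l_0$, and compact set $K = \{m\}$, I obtain
\[
\|A(t)(\delta_m v)\|_{W^{N-l_0}, M \setminus \Pen^+(\{m\},r/2)} \leq \mu'(r/2,t) \|\delta_m v\|_{W^{-l_0}} \leq C'_{l_0}\mu'(r/2,t).
\]
Since $\Pen^+(\{m\},r/2) \subset B(m,r)$, any representative of $A(t)(\delta_m v)$ that agrees with it near $M \setminus \Pen^+(\{m\},r/2)$ also agrees on $M \setminus B(m,r)$, and since $W^{N-l_0}\hookrightarrow L^2$ continuously, the Roe norm on the left dominates the $L^2$ norm of $\kappa_t(\cdot,m)v$ on $M \setminus B(m,r)$. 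Summing the resulting bound over an orthonormal basis of $S_m$ (whose size is bounded by $\rank S$) bounds the second integral by a constant times $\mu'(r/2,t)^2$.

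I then take $v(r,t)$ to be the maximum of the two bounds obtained, each of the form $(\text{const})\cdot\mu(r/2,t)^2$. Property (1) follows from $\lim_{r\to\infty}\mu(r/2,t)=0$. For property (2), note $t^{-a}v(r,t) = C\bigl(t^{-a/2}\mu(r/2,t)\bigr)^2$; condition (2) for $\mu$ with exponent $a/2$ makes $t^{-a/2}\mu(r/2,t)$ bounded and non-decreasing on $(0,1]$, and squaring preserves these properties for non-negative functions. The main obstacle I anticipate is not conceptual but rather the bookkeeping needed to align Sobolev indices, operator orders, and penumbra radii so that Definition \ref{def asympt local} can be invoked cleanly in each direction; conceptually the argument is simply Sobolev embedding plus asymptotic locality plus duality.
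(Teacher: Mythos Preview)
Your argument is correct and is essentially the one the paper invokes: the paper's proof simply cites Roe's proof of Proposition~5.4 in \cite{Roe88I} to obtain bounds of the form $C\mu_l(r,t)^2$ and $C'\mu_{l'}(r,t)^2$ for the two integrals, then sets $v = C\mu_l^2 + C'\mu_{l'}^2$; you have reconstructed that argument (Sobolev embedding applied to $A(t)\psi$ at the point $m$, and the dual application to $A(t)(\delta_m v)$). One small imprecision: your displayed ``equality'' between the square root of the integral and $\sup_\psi |(A(t)\psi)(m)|$ is really only an equivalence up to a constant depending on $\rank S$ (the left side is a Hilbert--Schmidt norm, the right an operator norm), but since only an upper bound is needed this is harmless.
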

\begin{proof}
In the proof of Proposition 5.4 in \cite{Roe88I}, it is shown that there are $l,l' \in \Z$ and $C,C'>0$ such that for all $t,r>0$,
\[
\begin{split}
\int_{M \setminus B(m, r)} \|{\kappa_t(m,m')}\|^2\, dm' &\leq C\mu_l(r,t)^2; \quad \text{and}\\
\int_{M \setminus B(m, r)} \|{\kappa_t(m',m)}\|^2\, dm' &\leq C'\mu_{l'}(r,t)^2,
\end{split}
\]
with $\mu_l$ and $\mu_{l'}$ as in Definition \ref{def asympt local}. So we can take $v = C\mu_l^2 + C'\mu_{l'}^2$.
\end{proof}

%

We write $\cU_{-\infty}^L(S)^{\Phi}$ for the subalgebra of $\cU_{-\infty}^L(S)$ of paths of operators with values in $\cU_{-\infty}(S)^{\Phi}$.

\subsection{A subalgebra}

Lemma \ref{lem UkL alg} implies that $\cU_{-\infty}^L(S)$ is an algebra. We will use a subalgebra of this algebra.  
By Proposition \ref{prop U-infty bdd}, every operator in $\cU_{-\infty}(S)$ has a smooth, bounded Schwartz kernel. 
\begin{definition}\label{def AL}
The vector space $\cA_{-\infty}^L(S)$ consists of those $A \in \cU_{-\infty}^L(S)$ with the following property. Let $\kappa_t \in \Gamma(S \boxtimes S^*)$ be the smooth kernel of $A(t)$. Then $A \in \cA_{-\infty}^L(S)$ if there are $C,a>0$ such that
 for all  $m \in M$ and $t \in (0,1]$,
\beq{eq int L2 AL}
\begin{split}
\int_{M} \|\kappa_t(m,m')\|^2\, dm'  &\leq  Ct^{-a};\quad \text{and}\\
\int_{M} \|\kappa_t(m',m)\|^2 \, dm'  &\leq  Ct^{-a}.\\
\end{split}
\eeq
\end{definition}
\begin{remark}\label{rem int cond ball}
Because of Theorem \ref{thm int outside ball}, the condition in Definition \ref{def AL} may be replaced by: there exist  $C,a,r>0$ such that for all $m \in M$ and $t \in (0,1]$, 
\beq{eq int L2 eps}
\begin{split}
\int_{B(m,  r)} \|\kappa_t(m,m')\|^2\, dm'  &\leq  Ct^{-a};\quad \text{and}\\
\int_{B(m, r)} \|\kappa_t(m',m')\|^2 \, dm'  &\leq  Ct^{-a}.
\end{split}
\eeq
\end{remark}

\begin{proposition}\label{prop ALS alg}
The space $\cA_{-\infty}^L(S)$ is closed under composition, and hence an algebra.
\end{proposition}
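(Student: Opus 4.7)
Given $A, B \in \cA_{-\infty}^L(S)$ with smooth time-$t$ kernels $\kappa^A_t$ and $\kappa^B_t$, Lemma \ref{lem UkL alg} already places $AB$ in $\cU_{-\infty}^L(S)$, and its kernel is given by the usual composition integral
\[
\kappa^{AB}_t(m, m'') = \int_M \kappa^A_t(m, m') \kappa^B_t(m', m'') \, dm'.
\]
The plan is therefore to verify the two $L^2$-kernel bounds \eqref{eq int L2 AL} for $\kappa^{AB}_t$, uniformly in $m$ and for $t \in (0, 1]$.

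For the first bound, my approach is to reinterpret $\int_M \|\kappa^{AB}_t(m, m'')\|^2 \,dm''$ as the squared $L^2$-norm of $B(t)^*$ applied to a specific auxiliary section. Fix $m \in M$ and $v \in S_m$, and define $f_v \in L^2(S)$ by $f_v(m') := \kappa^A_t(m, m')^* v$. The first estimate of \eqref{eq int L2 AL} for $A$ yields $\|f_v\|_{L^2}^2 \leq C_A t^{-a_A} \|v\|^2$. Using that the kernel of the $L^2$-adjoint $B(t)^*$ is $(m'', m') \mapsto \kappa^B_t(m', m'')^*$, a short manipulation of integrals gives $(B(t)^* f_v)(m'') = \kappa^{AB}_t(m, m'')^* v$. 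Since $B \in \cU_{-\infty}^L(S)$, the bound \eqref{eq unif norm A} of Definition \ref{def asympt local} with $l = k = 0$ provides $\|B(t)\|_{L^2 \to L^2} \leq C_B t^{-a_B}$ on $(0, 1]$, and the same polynomial bound applies to $B(t)^*$. I would then sum over an orthonormal basis $\{v_1, \ldots, v_N\}$ of $S_m$, with $N$ the rank of $S$, and invoke the Hilbert--Schmidt identity $\|T\|^2 \leq \mathrm{const}(N) \sum_i \|T^* v_i\|^2$ on $\Hom(S_{m''}, S_m)$ to conclude
\[
\int_M \|\kappa^{AB}_t(m, m'')\|^2 \, dm'' \leq C t^{-a}.
\]

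The second estimate in \eqref{eq int L2 AL} will follow from a mirror argument: for fixed $m$ and $u \in S_m$, I set $g_u(m') := \kappa^B_t(m', m) u$, use the second bound of \eqref{eq int L2 AL} for $B$ to control $\|g_u\|_{L^2}$, observe that $(A(t) g_u)(m') = \kappa^{AB}_t(m', m) u$, and apply the analogous $L^2$-operator bound for $A(t)$ from \eqref{eq unif norm A} with $l = k = 0$. Summing over an ONB of $S_m$ then finishes the argument.

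I do not expect any serious obstacle: once the identifications $B(t)^* f_v(m'') = \kappa^{AB}_t(m, m'')^* v$ and $A(t) g_u(m') = \kappa^{AB}_t(m', m) u$ are in place, the result reduces to the standard Hilbert--Schmidt interpretation of the kernel-norm integrals combined with polynomial $L^2$-operator-norm control. The main care needed is bookkeeping of kernel-adjoint conventions and ensuring that the various powers of $t^{-1}$ aggregate into a single polynomial bound on $(0,1]$.
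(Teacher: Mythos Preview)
Your argument is correct. It is, however, a genuinely different route from the paper's. The paper never invokes the $L^2$-operator-norm bound \eqref{eq unif norm A}; instead it applies Cauchy--Schwarz directly to the composition integral to obtain the \emph{pointwise} estimate $\|\kappa^{AB}_t(m,m')\|^2 \leq \bigl(\int_M \|\kappa^A_t(m,m'')\|^2\,dm''\bigr)\bigl(\int_M \|\kappa^B_t(m'',m')\|^2\,dm''\bigr) \leq C_A C_B\,t^{-(a_A+a_B)}$, then integrates this over a ball $B(m,1)$ and appeals to bounded geometry together with Remark~\ref{rem int cond ball} to upgrade the ball-integral to the full estimate \eqref{eq int L2 AL}. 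So the paper uses \emph{both} kernel bounds from Definition~\ref{def AL} (one for $A$, one for $B$), whereas you use one kernel bound and one operator-norm bound. Your approach has the minor advantage that it yields the integral over all of $M$ directly, avoiding the detour through Remark~\ref{rem int cond ball} and the bounded-geometry volume bound; the paper's approach is arguably more elementary in that it is pure Cauchy--Schwarz on kernels with no operator-theoretic reinterpretation.
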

\begin{proof}
Let $A,B \in \cA_{-\infty}^L(S)$. Then $AB \in \cU_{-\infty}^L(S)$ by Lemma \ref{lem UkL alg}. Let $C_A, a_A$ be as in Definition \ref{def AL} for $A$, and let $C_B, a_B$ be as in Definition \ref{def AL} for $B$. 
Let $m \in M$ and $t \in (0,1]$. Let $\kappa_t$ be the Schwartz kernel of $A(t)$, $\lambda_t$ the Schwartz kernel of $B(t)$, and $\mu_t$ the Schwartz kernel of $A(t)B(t)$. Then by the Cauchy--Schwartz inequality, 
\begin{multline*}
\int_{B(m,1)} \|\mu_t(m,m')\|^2\, dm' \\
 \leq  \int_{B(m,1)}
 \left(
 \int_M \|\kappa_t(m,m'')\|^2 \, dm''
 \int_M \|\lambda_t(m'',m')\|^2 \, dm''
 \right)
\, dm'\\
\vol(B(m, 1)) C_A C_B t^{-(a_A+a_B)}.
\end{multline*}
Because $M$ has bounded geometry, $\vol(B(m, 1))$ is bounded in $m$. 
By Remark \ref{rem int cond ball}, it follows that $AB \in \cA_{-\infty}^L(S)$. 
\end{proof}


\begin{lemma}\label{lem AB BA}
Suppose that $A \in \cU_{-\infty}^{L}(S)$, and that  there is a single function $\mu\colon (0, \infty) \times (0, \infty) \to (0, \infty)$ such that for all $l \in \Z$, $\mu_l = \mu$ has the properties in Definition \ref{def asympt local}, and so does the function $(r, t) \mapsto \mu(1,t)^{-1/2} \mu(r,t)$. 
Let $B \in \cU_k(S)$. 
Then $AB$ and $BA$ lie in $\cU_{-\infty}^{L}(S)$. 

If, furthermore, $A$ lies in $\cA_{-\infty}^{L}(S)$ and commutes with $B$, and $k \leq 0$,  then  $AB$ and $BA$ lie in $\cA_{-\infty}^{L}(S)$. 
\end{lemma}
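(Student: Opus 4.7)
The plan is to prove the two statements in order, mimicking the structure of the proof of Lemma \ref{lem UkL alg}, but with the second factor replaced by the fixed uniform operator $B$ in place of an asymptotically local path.

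For the first claim, I would view $B$ as a constant-in-$t$ element of $\cU_k(S)$ with operator norm $\|B\|$ and locality function $\mu^B_n(r)$ coming from Definition \ref{def uniform}. The norm bound \eqref{eq unif norm A} for the composition is immediate by composing $B \colon W^n \to W^{n-k}$ with $A(t) \colon W^{n-k} \to W^{n-l}$, giving a $C t^{-a}$ bound. For the locality condition, pick a smooth cutoff $\chi$ equal to $1$ on $\Pen^+(K, r/8)$ and supported in $\Pen^+(K, r/4)$, with bounded derivatives, and split
\[
A(t) B s \;=\; A(t)(\chi B s) \;+\; A(t)((1-\chi) B s).
\]
The first term is controlled by the locality of $A$ applied to $\chi B s$, whose support lies in $\Pen^+(K, r/4)$, yielding a contribution of the form $c \|B\|\, \mu(3r/4, t) \|s\|_{W^n}$. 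The second term uses the locality of $B$ to estimate $\|(1-\chi) B s\|_{W^{n-k}} \leq c\, \mu^B_n(r/8)\, \|s\|_{W^n}$, followed by the operator norm bound of $A(t)$. For $BA$ I would symmetrically split $A(t) s$ into near and far parts and then apply $B$.

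The second claim, under the assumptions $A \in \cA_{-\infty}^L(S)$, $AB = BA$, and $k \leq 0$, reduces to proving the $L^2$-kernel bounds \eqref{eq int L2 AL} for the smooth Schwartz kernel $K_t$ of $A(t)B$. The hypothesis $k \leq 0$ gives the inclusion $W^{-k}(S) \subset L^2(S)$, so $B$, and likewise its adjoint $B^* \in \cU_k(S)$, extends to a bounded operator on $L^2(S)$. For fixed $m \in M$, the row $K_t(m, \cdot)$ represents the continuous linear functional $s \mapsto (A(t) B s)(m) = \int_M \kappa_t(m, m')(B s)(m')\,dm'$ on $L^2(S)$, whose dual norm is bounded by $\|\kappa_t(m, \cdot)\|_{L^2}\,\|B\|_{L^2 \to L^2}$; squaring and using the first bound in \eqref{eq int L2 AL} for $A$ yields the required row estimate on $K_t$. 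For the column estimate, the commutation $AB = BA$ gives $(A(t) B)^* = A(t)^* B^*$, and the same dualisation argument applied to $B^*$ together with the second bound in \eqref{eq int L2 AL} for $A$ yields it.

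The main obstacle will be verifying the boundedness and monotonicity of $t \mapsto t^{-a} \mu_n^{AB}(r, t)$ on $(0, 1]$ in the first claim: the crude bound on $A(t)((1-\chi)Bs)$ contains a factor of $\|A(t)\|$, which can grow like $t^{-a}$ as $t \downarrow 0$, while $\mu^B_n(r/8)$ is independent of $t$, so $t^{-a}$ times their product is not non-decreasing on $(0, 1]$. The auxiliary hypothesis that $(r, t) \mapsto \mu(1, t)^{-1/2} \mu(r, t)$ also satisfies the properties of Definition \ref{def asympt local} is designed precisely to trade this divergent operator norm for a factor of $\mu(1, t)^{-1/2}$ — obtained by invoking the locality of $A$ at the fixed radius $1$ — which can then be absorbed against $\mu(r, t)$ to yield a locality function of the required form. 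Setting up this rebalanced decomposition will be the main technical step.
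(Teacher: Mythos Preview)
Your argument for the second claim is correct and essentially coincides with the paper's: both use that $B$ and $B^*$ act boundedly on $L^2(S)$ when $k\le 0$, identify the row and column $L^2$-norms of the kernel of $A(t)B$ with operator-theoretic quantities (the paper writes these as $\|BA(t)\delta_m e_j\|_{L^2}$, which is the same as your dualisation), and pull out $\|B\|$. The use of commutation to reduce the row estimate to a column estimate for $B^*A(t)^*$ is also exactly what the paper does.

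For the first claim, your diagnosis of the obstacle is right, but the proposed fix does not work as stated. ``Invoking the locality of $A$ at the fixed radius $1$'' produces a bound of size $\mu(1,t)$, not $\mu(1,t)^{-1/2}$, and only outside a penumbra; it cannot replace the global operator norm $\|A(t)\|$ appearing in your estimate for $A(t)\bigl((1-\chi)Bs\bigr)$. Nor can the $\mu(r,t)$ from the \emph{other} term of your decomposition absorb anything here, since the two terms are added, not multiplied.

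The paper sidesteps the cutoff argument entirely by a multiplicative rescaling of the operators. Set $\tilde B(t):=\mu(1,t)^{1/2}B$. Since $t^{-2a}\mu(1,t)$ is bounded and non-decreasing on $(0,1]$ for every $a\ge 0$, so is $t^{-a}\mu(1,t)^{1/2}$; hence $(r,t)\mapsto \mu(1,t)^{1/2}\mu^B_l(r)$ satisfies the conditions of Definition~\ref{def asympt local} and $\tilde B\in\cU_k^L(S)$. Simultaneously, the path $t\mapsto \mu(1,t)^{-1/2}A(t)$ has locality function $(r,t)\mapsto \mu(1,t)^{-1/2}\mu(r,t)$, which by the auxiliary hypothesis also satisfies those conditions, so this rescaled path lies in $\cU_{-\infty}^L(S)$. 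Writing $A(t)B=\bigl(\mu(1,t)^{-1/2}A(t)\bigr)\tilde B(t)$ and applying Lemma~\ref{lem UkL alg} finishes the argument. In other words, the ``rebalancing'' you were looking for is achieved by moving the factor $\mu(1,t)^{1/2}$ onto $B$---where it supplies precisely the $t$-decay missing from $\mu^B_l(r)$---rather than by manipulating terms inside a cutoff decomposition.
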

\begin{proof}
Define the path of operators $\tilde B\colon (0, \infty) \to \cU_k(S)$ by $\tilde B(t) = \mu(1,t)^{1/2}B$. Then $\tilde B \in \cU_k^L(S)$. And the path $t \mapsto \mu(1,t)^{-1/2}A(t)$ lies in $\cU_{-\infty}^{L}(S)$ by the assumption on $\mu$. Hence the path $t \mapsto A(t)B = \mu(1,t)^{-1/2}A(t) \tilde B(t)$ lies in $\cU_{-\infty}^{L}(S)$ by Lemma \ref{lem UkL alg}. The same argument applies to $BA$. 

Now suppose that  $A \in \cA_{-\infty}^{L}(S)$. If $k \leq 0$, then $\cU_k(S) \subset \cB(L^2(S))$. Let $\|B\|$ be the operator norm of $B \colon L^2(S) \to L^2(S)$. Let $t>0$. 
Let $\kappa_{A(t)}$ be the Schwartz kernel of $A(t)$ and let  $\kappa_{BA(t)}$ be the Schwartz kernel of $BA(t)$. 
Let $m \in M$. For $v \in S_m$, let $\delta_m v$ be the distributional section of $S$ mapping $s \in \Gamma^{\infty}_c(S^*)$ to $\langle s(m), v\rangle$. Then $\delta_m v \in W^l(S)$ for some $l \in \Z$, so $A(t)\delta_m v \in L^2(S)$. 
Let $\{e_1, \ldots, e_r\}$ be an orthonormal basis of $S_m$. Then, up to a multiplicative constant we absorb in the definition of the norms on $\Hom(S_m, S_{m'})$, for $m' \in M$, we have 
\beq{eq AB BA 1}
\begin{split}
\int_M \|\kappa_{BA(t)}(m', m)\|^2\, dm' &= \sum_{j=1}^r \int_M \|\kappa_{BA(t)}(m',m) e_j\|^2\, dm\\
&= \sum_{j=1}^r \| BA(t) \delta_m e_j  \|_{L^2(S)}^2\\
& \leq \|B\|^2 \sum_{j=1}^r \| A(t) \delta_m e_j  \|_{L^2(S)}^2\\
&= \|B\|^2 \int_M \|\kappa_{A(t)}(m', m)\|^2\, dm'. 
\end{split}
\eeq

%
%

Furthermore, if we use stars to denote $L^2$-adjoints, then, because $B$ and $A(t)$ commute,
\[
\begin{split}
\int_M \|\kappa_{BA(t)}(m, m')\|^2\, dm' &= \int_M \|\kappa_{A(t)B}(m, m')\|^2\, dm' \\
&= \int_M \|\kappa_{(A(t)B)^*}(m', m)\|^2\, dm' \\
&=  \int_M \|\kappa_{B^*A(t)^*}(m', m)\|^2\, dm'.
%
%
\end{split}
\]
As in \eqref{eq AB BA 1}, the latter integral is bounded above by 
\[
\|B^*\|^2 \int_M \|\kappa_{A(t)^*}(m', m)\|^2\, dm' = \|B\|^2 \int_M \|\kappa_{A(t)}(m, m')\|^2\, dm'.
\]
\end{proof}

We write $\cA_{-\infty}^L(S)^{\Phi}$ for the subalgebra of $\cA_{-\infty}^L(S)$ of paths of operators with values in $\cU_{-\infty}(S)^{\Phi}$.

\section{A $K$-theoretic index}

Our next goal is to show that the Dirac operator $D$ has an index in the $K$-theory of the algebra $\cA_{-\infty}^L(S)^{\Phi}$, see Definition \ref{def index K}.

\subsection{Functional calculus}

For $a \in \R$, let  $\cS^a(\R)$ be the space of functions $f \in C^{\infty}(\R)$ such that for all $l = 0, 1, 2, \ldots$, there is a $C_l>0$ such that for all $x \in \R$,
\beq{eq def Sa}
|f^{(l)}(x) | \leq C_l (1+|x|)^{a-l}.
\eeq
For a function $f \in \cS^a(\R)$, the operator $f(D)$ defined by functional calculus lies in $\cU_k(S)$ by Theorem 5.5 in \cite{Roe88II}. We extend this argument to paths of operators in Proposition \ref{prop func calc UkL}. 

\begin{lemma}\label{lem Sa1}
Let $a \in \R$ and $f \in \cS^a(\R)$. 
Let $g \in C^{\infty}_c(\R)$. For $\varepsilon>0$, define the function $f_{\varepsilon} \in C^{\infty}(\R)$ by $f_{\varepsilon}(x) = f(x)g(\varepsilon x)$. 
Then  for all $l\geq 0$ and all $j \geq a+l+2$, there is a $C_{l,j}>0$ such that for all $\varepsilon \in (0,1]$ and $\xi \in \R$,
\[
|\widehat {f_{\varepsilon} }^{(l)}(\xi)| \leq C_{l,j} (1+|\xi|)^{-j}.
\] 
\end{lemma}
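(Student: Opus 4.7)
Set $H_\varepsilon(x) := (-ix)^l f(x) g(\varepsilon x)$, so that differentiation under the Fourier integral gives $\widehat{f_\varepsilon}^{(l)}(\xi) = \widehat{H_\varepsilon}(\xi)$. I would bound $(1+|\xi|)^j |\widehat{H_\varepsilon}(\xi)|$ by combining the two standard Fourier estimates
\[
|\xi|^j |\widehat{H_\varepsilon}(\xi)| \leq \|H_\varepsilon^{(j)}\|_{L^1}, \qquad |\widehat{H_\varepsilon}(\xi)| \leq \|H_\varepsilon\|_{L^1},
\]
split across the regions $|\xi| \geq 1$ and $|\xi| \leq 1$. The whole problem then reduces to proving that $\|H_\varepsilon\|_{L^1}$ and $\|H_\varepsilon^{(j)}\|_{L^1}$ are bounded uniformly in $\varepsilon \in (0,1]$.

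For $\|H_\varepsilon^{(j)}\|_{L^1}$, I would expand via Leibniz; every summand has the form $c\, x^{l-\alpha} f^{(\beta)}(x)\, \varepsilon^\gamma g^{(\gamma)}(\varepsilon x)$ with $\alpha+\beta+\gamma = j$ and $\alpha \leq l$. Using $|x|^{l-\alpha} \leq (1+|x|)^{l-\alpha}$ together with the defining bound $|f^{(\beta)}(x)| \leq C_\beta(1+|x|)^{a-\beta}$, each summand is pointwise dominated by
\[
C\,(1+|x|)^{a+l-j+\gamma}\,\varepsilon^\gamma\,|g^{(\gamma)}(\varepsilon x)|.
\]
I would then integrate term by term. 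The $\gamma = 0$ contribution is at most $\|g\|_\infty \int_{\R}(1+|x|)^{a+l-j}\,dx$, which is finite and $\varepsilon$-independent because the hypothesis $j \geq a+l+2$ forces $a+l-j \leq -2 < -1$. For $\gamma \geq 1$, I would substitute $y = \varepsilon x$ and use that $g^{(\gamma)}$ is compactly supported; a short calculation shows the integrated term equals $\varepsilon^{\gamma-1}$ times an integral whose worst-case growth in $\varepsilon$ is $\varepsilon^{-(a+l-j+\gamma)}$ (when that exponent is positive), so that the net power of $\varepsilon$ is at least $j-a-l-1 \geq 1$, yielding uniform boundedness on $(0,1]$. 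An analogous (and simpler) argument handles $\|H_\varepsilon\|_{L^1}$.

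The main technical obstacle is the $\varepsilon$-bookkeeping in the $\gamma \geq 1$ terms: after the substitution, the weight $(1+|x|)^{a+l-j+\gamma}$ becomes $(1+|y|/\varepsilon)^{a+l-j+\gamma}$, which is large on the support of $g^{(\gamma)}$ when its exponent is positive, and this growth must be matched exactly by the $\varepsilon^{\gamma-1}$ produced by the $\varepsilon^\gamma$ factor together with the Jacobian $dy/\varepsilon$. The two-unit margin in the hypothesis $j \geq a+l+2$, rather than a sharper $j > a+l+1$, is precisely what makes every Leibniz term come out with a nonnegative (so $\leq 1$ for $\varepsilon \in (0,1]$) power of $\varepsilon$, averting a borderline logarithmic blow-up.
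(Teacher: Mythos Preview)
Your bound on $\|H_\varepsilon^{(j)}\|_{L^1}$ is correct and is essentially the paper's argument. The paper gets there via a single pointwise estimate rather than your substitution-and-casework: on the support of $g^{(\gamma)}(\varepsilon\,\cdot\,)$ one has $\varepsilon(1+|x|)\le \varepsilon+R\le 1+R$, so $\varepsilon^{\gamma}(1+|x|)^{\gamma}$ is bounded, and every Leibniz term is dominated by $C(1+|x|)^{a+l-j}$; then one integrates once. The content is the same.

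The gap is your final sentence. The claim that $\|H_\varepsilon\|_{L^1}$ is uniformly bounded is false: take $f\equiv 1\in\cS^{0}(\R)$ and $l=0$, so that $H_\varepsilon(x)=g(\varepsilon x)$ and $\|H_\varepsilon\|_{L^1}=\varepsilon^{-1}\|g\|_{L^1}\to\infty$. Your own $\gamma=0$ argument needs the exponent $a+l-j\le -2$, and here the effective $j$ is $0$, so the integrability fails whenever $a+l\ge -1$. In fact this example shows that the inequality in the lemma cannot hold uniformly at $\xi=0$: one has $\widehat{f_\varepsilon}(0)=\varepsilon^{-1}\hat g(0)$, which is unbounded as soon as $\int g\neq 0$. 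The paper's proof actually stops at $|\xi|^{j}\,|\widehat{f_\varepsilon}^{(l)}(\xi)|\le C$, i.e.\ the weight $|\xi|^{-j}$ rather than $(1+|\xi|)^{-j}$, and that is all that is used downstream (the only application integrates over $|\xi|\ge r/2$ with $r\ge 1$, where the two weights are comparable). Your $\|H_\varepsilon^{(j)}\|_{L^1}$ estimate already yields this correct weaker statement; drop the attempt to upgrade via $\|H_\varepsilon\|_{L^1}$.
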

\begin{proof}
Let $l \geq 0$. Consider the function $f_{\varepsilon, l}(x) := x^l f_{\varepsilon}(x)$. For all $j \geq 0$,  the Leibniz rule implies that there is a $C>0$ such that for all $\varepsilon \in (0,1]$ and all $x \in \R$, 
\[
|f_{\varepsilon, l}^{(j)}(x)| \leq C(1+ |x|)^{a+l-j}.
\]
If $j \geq a+l+2$, then this becomes
\[
|f_{\varepsilon, l}^{(j)}(x)| \leq C(1+ |x|)^{-2}.
\]
So for all $\xi \in \R$,
\[
|\xi|^j |\widehat{f_{\varepsilon}}^{(l)}(\xi)|  = 
|\widehat{f_{\varepsilon, l}^{(j)}}(\xi)| \leq C \int_{\R}(1+ |x|)^{-2}\, dx.
\]
\end{proof}



\begin{proposition}\label{prop func calc UkL}
Let $k \in \Z$ and $f \in \cS^k(\R)$. We write $\tilde k := \max(-k, 0)$.   Then the path of operators $t \mapsto f(tD)$ lies in $\cU_k^L(S)$. Specifically, there is a single function $\mu\colon (0, \infty) \times (0, \infty) \to (0, \infty)$ such that for all $l \in \Z$, $\mu_l = \mu$ has the properties in Definition \ref{def asympt local},  and for all $b \geq 2\tilde k+2$ there is a constant $C_{b}>0$, independent of $l$,  such that for all $r,t>0$, 
\beq{eq mu fc}
\mu(r,t) \leq C_{b} r^{\tilde k-b+1}t^{b-\tilde k-1}.
\eeq
\end{proposition}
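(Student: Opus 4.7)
The argument is a parametric (in $t$) refinement of Roe's proof of Theorem~5.5 in \cite{Roe88II}, resting on three ingredients: the Fourier representation $f(tD) = \tfrac{1}{2\pi}\int\hat f(\xi)\,e^{i\xi tD}\,d\xi$, the finite propagation speed of the wave operator $e^{i\xi tD}$ (which moves supports by at most $|\xi|t$), and the Fourier decay of the smooth compactly supported truncation $f_\varepsilon$ supplied by Lemma~\ref{lem Sa1}. For the operator norm bound \eqref{eq unif norm A}, I would write the $W^l(S)\to W^{l-k}(S)$ norm of $f(tD)$ via the conjugated symbol $y\mapsto f(ty)(1+y^2)^{-k/2}$: for $k\geq 0$ this is bounded uniformly in $y$, $t$, and $l$; for $k<0$ it has polynomial growth of order $\tilde k$ in $y$, which contributes a factor $t^{-\tilde k}$ after rescaling, giving the required power bound on $(0,1]$.

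\textbf{Propagation estimate.} Pick $g\in C^\infty_c(\R)$ with $g\equiv 1$ on $[-1,1]$ and set $f_\varepsilon(x):=f(x)g(\varepsilon x)$ for $\varepsilon\in(0,1]$. Each $f_\varepsilon$ is smooth and compactly supported, so $\hat f_\varepsilon$ is Schwartz and Fourier inversion gives
\[
f_\varepsilon(tD) = \frac{1}{2\pi}\int_\R \hat f_\varepsilon(\xi)\,e^{i\xi tD}\,d\xi,
\]
with $f_\varepsilon(tD)\to f(tD)$ strongly as $\varepsilon\to 0$ by the spectral theorem. For $s\in L^2(S)$ supported in a compact set $K$, finite propagation forces the part of $f_\varepsilon(tD)s$ outside $\Pen^+(K,r)$ to come from $|\xi|>r/t$, so
\[
\bigl\|f_\varepsilon(tD)s\bigr\|_{L^2,\,M\setminus\Pen^+(K,r)} \leq \frac{\|s\|_{L^2}}{2\pi}\int_{|\xi|>r/t}|\hat f_\varepsilon(\xi)|\,d\xi.
\]
Lemma~\ref{lem Sa1} at derivative order $0$ gives $|\hat f_\varepsilon(\xi)| \leq C_b(1+|\xi|)^{-b}$ for every $b\geq k+2$, uniformly in $\varepsilon$, so the right-hand side is $\leq C'_b (r/t)^{1-b}\|s\|_{L^2}$; passing $\varepsilon\to 0$ keeps the bound.

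\textbf{Uniformity in $l$ and the $\tilde k$-shift.} To promote the $L^2$ estimate to the full version required by Definition~\ref{def asympt local}, combine it with the conjugation from the first paragraph, using a spatial cutoff with bounded derivatives (controlled via bounded geometry) to accommodate the nonlocality of $(1+D^2)^{\pm l/2}$. The crucial observation is that the conjugated symbol depends on $l$ only through the $\tilde k$-shift, so feeding the Fourier decay with $b\geq 2\tilde k + 2$ (so that the conjugated symbol lies in $\cS^{\tilde k}$ and satisfies the hypothesis of Lemma~\ref{lem Sa1}) yields
\[
\mu(r,t) \leq C_b\, r^{1-b}\, t^{b-1}\cdot t^{-\tilde k} = C_b\, r^{\tilde k - b + 1}\, t^{b - \tilde k - 1},
\]
with $C_b$ independent of $l$. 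The two monotonicity conditions in Definition~\ref{def asympt local} then hold: condition (1) from the negative $r$-exponent (since $b \geq 2\tilde k + 2 > \tilde k + 1$), and condition (2) from choosing $b \geq a + \tilde k + 1$ so that $t^{-a}\mu(r,t)$ has non-negative $t$-exponent on $(0,1]$.

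\textbf{Main obstacle.} The delicate point is obtaining a single $\mu$ that works for every $l\in\Z$. A direct attack, applying Lemma~\ref{lem Sa1} at derivative order $|l-k|$ to bound $D^{l-k}f_\varepsilon(tD)s$ via decay of $\hat f_\varepsilon^{(|l-k|)}$, produces constants and a power of $t$ that depend on $l$. Routing through symbol conjugation removes the $l$-dependence at the cost of the $t^{-\tilde k}$ prefactor and is what produces a single universal $\mu$. Equally subtle is verifying that the nonlocal conjugation $(1+D^2)^{\pm l/2}$ does not destroy the support condition; this is handled by dovetailing it with spatial cutoffs with controlled derivatives, and by treating negative $l$ via duality between $W^l(S)$ and $W^{-l}(S)$.
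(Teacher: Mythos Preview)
Your proposal shares the basic ingredients with the paper's proof---Fourier inversion, finite propagation, the decay estimate of Lemma~\ref{lem Sa1}---but there is a genuine gap in how you pass from the localised $L^2$ estimate to the localised $W^l \to W^{l-k}$ estimate.

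The paper does not attempt this via symbol conjugation. Instead it invokes Roe's Theorem~5.5, which already supplies an explicit formula for $\mu$ that is manifestly independent of $l$:
\[
\mu(r,t) = \frac{1}{2\pi}\sup_{\varepsilon\in(0,t]}\sum_{n=0}^{\tilde k}\int_\R\Bigl|\partial_\xi^n\bigl(\widehat{f_{t,\varepsilon}}(\xi)(1-h_r(\xi))\bigr)\Bigr|\,d\xi,
\]
where $h_r$ is a smooth frequency cutoff with $h_r\equiv 1$ on $[-r/2,r/2]$ and $h_r\equiv 0$ outside $[-r,r]$. The mechanism behind Roe's formula is that the extra $D$-powers needed for the $W^{l-k}$ norm are converted, via $D^n e^{i\xi D}=(-i)^n\partial_\xi^n e^{i\xi D}$ and integration by parts, into $\xi$-derivatives of $\widehat{f_{t,\varepsilon}}(1-h_r)$; everything stays on the Fourier side and no nonlocal spatial conjugation enters. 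The rest of the paper's proof is a direct estimate of this integral: feed in the scaling $\widehat{f_{t,\varepsilon}}(\xi)=t^{-1}\widehat{f_{1,\varepsilon/t}}(\xi/t)$, apply the Leibniz rule, bound $\bigl|\partial_\xi^{n-p}(1-h_r)\bigr|$ by a constant times $r^{-(n-p)}$ (this is where the $r^{\tilde k}$ factor in \eqref{eq mu fc} originates), and then invoke Lemma~\ref{lem Sa1} on $\widehat{f_{1,\varepsilon}}^{(p)}$.

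Your conjugation route runs into exactly the obstacle you flag: the operators $(1+D^2)^{\pm l/2}$ are nonlocal, so sandwiching them with a spatial cutoff does not reduce to your $L^2$ propagation estimate in any clean way, and commutators $[\chi,D^j]$ introduce $l$-dependent constants that undo the uniformity you want. Note also that your displayed equality $C_b\,r^{1-b}t^{b-1}\cdot t^{-\tilde k}=C_b\,r^{\tilde k-b+1}t^{b-\tilde k-1}$ is not an identity in the $r$-exponent; the extra $r^{\tilde k}$ in the paper's bound comes from derivatives of the frequency cutoff $h_r$, not from a $t^{-\tilde k}$ prefactor.
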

\begin{proof}
For $t>0$, define $f_t \in C^{\infty}(\R)$ by $f_t(x) = f(tx)$. Let $g \in C^{\infty}_c(\R)$ be such that $g(0) = 1$. For $\varepsilon>0$, define $f_{t, \varepsilon} \in C^{\infty}(\R)$ by $f_{t, \varepsilon}(x) = f_t(x)g(\varepsilon x)$.
Let $h \in C^{\infty}(\R)$ be such that $h(\R) \subset [0,1]$, and 
\[
h(\xi) = \left \{ 
\begin{array}{ll}
1 & \text{if $|\xi| \leq 1/2$};\\
0 & \text{if $|\xi| \geq 1$}.
\end{array}
\right.
\]
For $r>0$, define $h_r \in C^{\infty}(\R)$ by $h_r(\xi) = h(\xi/r)$.
By Theorem 5.5 in \cite{Roe88I}, the operator $f(tD)$ lies in $\cU_k(S)$, where the function $\mu_l$ in Definition \ref{def uniform}  may be taken to be
\beq{eq mu l}
\mu_l(r,t) = \mu(r, t) := \frac{1}{2\pi} \sup_{\varepsilon \in (0, t]} \sum_{n=0}^{\tilde k} \int_{\R} \left|  \frac{d^n}{d\xi^n} \left(  \widehat {f_{t, \varepsilon}}(\xi) (1-h_r(\xi))\right) \right| \, d\xi,
\eeq
for all $l \in \Z$.
(The supremum over $\varepsilon$ may be taken over $(0,a]$ for an arbitrarily small $a>0$, because the limit $\varepsilon \downarrow 0$ is taken in the end. The choice $a = t$ is convenient in the current proof.)

By a direct computation,
\[
\widehat {f_{t, \varepsilon}}(\xi) = \frac{1}{t} \widehat {f_{1, \varepsilon/t}}(\xi/t)
\]
for all $\xi \in \R$. So for all $n$ and $\xi$, 
\[
\left|  \frac{d^n}{d\xi^n} \left(  \widehat {f_{t, \varepsilon}}(\xi) (1-h_r(\xi))\right) \right|
\leq \frac{1}{t}\sum_{p=0}^n {n \choose p} 
\left| \frac{d^p}{d\xi^p}  \widehat{f_{1, \varepsilon/t}}(\xi/t) \right|
 \cdot \left| \frac{d^{n-p}}{d\xi^{n-p}}   (1-h_r(\xi))\right|.
\]
Now the factor $\left| \frac{d^p}{d\xi^p}  \widehat{f_{1, \varepsilon/t}}(\xi/t) \right|$ is bounded by a constant times
\[
\frac{1}{t^p}
\left|   \widehat{f_{1, \varepsilon/t}}^{(p)}(\xi/t) \right|.
\]
And the factor $\left| \frac{d^{n-p}}{d\xi^{n-p}}   (1-h_r(\xi))\right|$ equals $|1-h(\xi/r)|$ if $p=n$, and is bounded by a constant times $r^{-(n-p)} h^{(n-p)}(\xi/r)$ if $p<n$. In all cases, it is bounded by a constant times $r^{-(n-p)}$ times a function that vanishes on $[-r/2, r/2]$.

We find that if $t\in (0,1]$ and $r \geq 1$,  \eqref{eq mu l} is smaller than or equal to a constant times
\begin{multline}\label{eq Sa 3}
t^{-\tilde k-1}r^{\tilde k}
\sup_{\varepsilon\in (0, t]} \sum_{n=0}^{\tilde k} \sum_{p=0}^n \int_{\R \setminus [-r/2, r/2]}|\widehat {f_{1, \varepsilon/t}}^{(p)}(\xi/t)|\, d\xi\\
= 
t^{-\tilde k-1}r^{\tilde k}
\sup_{\varepsilon\in (0, 1]} \sum_{n=0}^{\tilde k} \sum_{p=0}^n \int_{\R \setminus [-r/2, r/2]}|\widehat {f_{1, \varepsilon}}^{(p)}(\xi/t)|\, d\xi
\end{multline}
Lemma \ref{lem Sa1} implies that for all $p\geq 0$ and all $b \geq k+p+2$, there is a $C_{p,b}>0$  such that for all $\xi \in \R$ and all $\varepsilon \in (0,1]$,
\[
|\widehat {f_{1, \varepsilon}} ^{(p)}(\xi)| \leq C_{p,b} (1+|\xi|)^{-b}.
\] 
So if $t \in (0,1]$ and $r\geq 1$, and $b\geq 2\tilde k+2$,  then  \eqref{eq Sa 3} is smaller than or equal to a constant times
\[
t^{-\tilde k-1}r^{\tilde k}
 \sum_{n=0}^{\tilde k} \sum_{p=0}^n \int_{\R \setminus [-r/2, r/2]}
(1+|\xi/t|)^{-b}
 \, d\xi.
\] 
The latter expression is smaller than or equal to a constant times
\[
t^{-\tilde k}r^{\tilde k} (1+(r/2t))^{-b+1} < 2^{1-b} t^{-\tilde k+b-1} r^{\tilde k-b+1}.
\]
\end{proof}
%

\subsection{Heat operators}

For $t>0$, consider the heat operator $e^{-tD^2}$ associated to $D$.  
\begin{proposition}\label{prop pos j}
For all $j \geq 0$,  the path of operators $t\mapsto  D^j e^{-t^2D^2}$ lies in $\cU_{-\infty}^L(S)^{\Phi}$.
\end{proposition}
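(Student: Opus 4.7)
The natural candidate is the Schwartz function $f(x) := x^j e^{-x^2}$. Since all its derivatives decay faster than any polynomial, $f \in \cS^k(\R)$ for every $k \in \Z$, so Proposition \ref{prop func calc UkL} applies and yields that the path $t \mapsto f(tD)$ lies in $\cU_k^L(S)$ for every $k \in \Z$, hence in $\cU_{-\infty}^L(S)$.

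The plan is then to rescale. By the spectral theorem,
\[
f(tD) = (tD)^j e^{-(tD)^2} = t^j\, D^j e^{-t^2 D^2},
\]
so $D^j e^{-t^2 D^2} = t^{-j} f(tD)$. It remains to check that multiplication by the scalar function $t^{-j}$ preserves membership in $\cU_{-\infty}^L(S)$. The operator-norm estimate \eqref{eq unif norm A} only degrades from $C_l t^{-a_l}$ to $C_l t^{-a_l - j}$, which is still of the required polynomial form. If $\mu$ is the single function provided by Proposition \ref{prop func calc UkL} for $f(tD)$, then I take
\[
\tilde\mu(r,t) := t^{-j} \mu(r,t)
\]
as the corresponding function for $t^{-j}f(tD)$. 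For fixed $t$, $\tilde\mu(r,t) \to 0$ as $r \to \infty$; for any $a \geq 0$, one has $t^{-a}\tilde\mu(r,t) = t^{-(a+j)}\mu(r,t)$, which is bounded and non-decreasing on $(0,1]$ by the property of $\mu$ applied with exponent $a+j \geq 0$; and the kernel estimate in Definition \ref{def asympt local}(3) scales by the factor $t^{-j}$ on both sides.

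Finally, $\Phi$-invariance is immediate: since $D\Phi = \Phi D$, the operator $\Phi$ commutes with $D^j$ and, via functional calculus, with $e^{-t^2 D^2}$, hence with their product. Thus the path lies in $\cU_{-\infty}^L(S)^{\Phi}$.

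There is no real obstacle here; the proof is essentially a one-line reduction to Proposition \ref{prop func calc UkL}, with a minor verification that the defining conditions of $\cU_{-\infty}^L$ are stable under multiplication by a fixed negative power of $t$. The only point that requires thought is ensuring that the function $\mu$ from Proposition \ref{prop func calc UkL} still satisfies the monotonicity/boundedness condition after the shift $a \mapsto a+j$, which is exactly why that condition was phrased uniformly in all $a \geq 0$.
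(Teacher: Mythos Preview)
Your proof is correct and follows essentially the same approach as the paper: apply Proposition~\ref{prop func calc UkL} to the Schwartz function $f(x)=x^j e^{-x^2}$, then rescale by $t^{-j}$ and verify that the resulting $\tilde\mu(r,t)=t^{-j}\mu(r,t)$ still satisfies the conditions of Definition~\ref{def asympt local}. The only cosmetic difference is that the paper re-derives the operator-norm bound \eqref{eq unif norm A} directly by computing $\sup_x |x^{j+l}e^{-t^2x^2}|$, whereas you (legitimately) invoke the fact that membership in $\cU_k^L(S)$, as asserted by Proposition~\ref{prop func calc UkL}, already includes that bound.
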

\begin{proof}
For all $t>0$ and $j,l \geq 0$, the function $f_{j+l,t}$ on $(0, \infty)$ defined by
 $f_{j+l,t}(x) = x^{j+l} e^{-t^2 x^2}$ has maximum value
 \[
 \|f_{j+l,t}\|_{\infty} = \left( \frac{j+l}{2t^2}\right)^{(j+l)/2} e^{-(j+l)/2}.
 \]
 So for all $k \in \Z$, the right hand side is an upper bound for the norm of
$D^j e^{-t^2D^2}$ as an operator from $W^k(S)$ to $W^{k+l}(S)$. It follows that $t\mapsto   D^j e^{-t^2D^2}$ has the first property in Definition \ref{def asympt local}, for all $k \in \Z$. 

Let $\tilde \mu$ be as in Proposition \ref{prop func calc UkL}, with $f(x) =  x^j e^{-x^2}$. 
This function has the properties of the functions $\mu_l$ in Definition \ref{def asympt local} for 
%
%
 the path of operators $t\mapsto   t^j D^j  e^{-t^2D^2}$. It follows that the function $\mu(r,t) = t^{-j} \tilde \mu(r,t)$ has these properties for the path of operators $t\mapsto    D^j  e^{-t^2D^2}$.
%
\end{proof}

\begin{proposition}\label{prop pos j A}
For all $j \geq 0$,  the path of operators $t\mapsto  D^j e^{-t^2D^2}$ lies in $\cA_{-\infty}^L(S)^{\Phi}$.
\end{proposition}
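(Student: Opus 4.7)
The plan is as follows. By Proposition \ref{prop pos j}, the path $A(t) := D^j e^{-t^2 D^2}$ already lies in $\cU_{-\infty}^L(S)^{\Phi}$, so only the two $L^2$-kernel bounds in Definition \ref{def AL} remain to be verified. Since $M$ is complete, $D$ is essentially self-adjoint on $L^2(S)$, so the operator $A(t)$, defined by functional calculus via the real-valued symbol $x \mapsto x^j e^{-t^2 x^2}$, is self-adjoint. Consequently its smooth Schwartz kernel satisfies $\kappa_t(m',m) = \kappa_t(m,m')^*$ pointwise, whence the two integrals in \eqref{eq int L2 AL} have the same integrand in norm, and it suffices to bound just one of them.

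For that bound I adapt the delta-function manipulation used in the proof of Lemma \ref{lem AB BA}. Fix an integer $k$ with $k > \dim(M)/2$. Because $M$ and $S$ have bounded geometry, the Sobolev embedding $W^k(S) \hookrightarrow C^0_b(S)$ holds with a uniform constant; equivalently, there is $C_1 > 0$ such that for every $m \in M$ and $v \in S_m$ the evaluation distribution $\delta_m v$ belongs to $W^{-k}(S)$ with $\|\delta_m v\|_{W^{-k}} \leq C_1 \|v\|$.

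To estimate the operator norm of $A(t)$ as a map $W^{-k}(S) \to L^2(S)$, I factor $A(t) = f_t(D) \circ (1+D^2)^{-k/2}$ with $f_t(x) = x^j (1+x^2)^{k/2} e^{-t^2 x^2}$. A direct calculus computation shows that $\|f_t\|_\infty \leq C_2 t^{-(j+k)}$ for all $t \in (0,1]$, so by the spectral theorem $\|A(t)\|_{W^{-k}(S) \to L^2(S)} \leq C_2 t^{-(j+k)}$. Picking an orthonormal basis $\{e_\alpha\}_{\alpha=1}^{\rank S}$ of $S_m$ and manipulating exactly as in \eqref{eq AB BA 1} (up to a multiplicative constant absorbed into the fibrewise norms on $\Hom$-spaces), one obtains
\[
\int_M \|\kappa_t(m',m)\|^2\, dm' = \sum_{\alpha} \|A(t) \delta_m e_\alpha\|^2_{L^2(S)} \leq (\rank S)(C_1 C_2)^2\, t^{-2(j+k)}.
\]
This yields the desired bound in \eqref{eq int L2 AL} with $a = 2(j+k)$, and the symmetric estimate then follows from self-adjointness, placing $A$ in $\cA_{-\infty}^L(S)^{\Phi}$. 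The only nontrivial technical ingredient is the uniformity of the Sobolev embedding, which is a standard consequence of bounded geometry; the rest is bookkeeping.
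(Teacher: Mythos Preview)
Your proof is correct and takes a genuinely different route from the paper's. The paper invokes an external pointwise Gaussian bound on the heat kernel, Proposition~4.2 in \cite{CGRS14}, namely $\|\kappa_t(m,m')\| \leq C\, t^{-(\dim M + j)} e^{-a\, d(m,m')^2/t^2}$, and then integrates this over a ball of fixed radius, appealing to Remark~\ref{rem int cond ball} and the uniform volume bound from bounded geometry. You instead stay within the paper's own toolkit: the uniform Sobolev embedding $W^k(S) \hookrightarrow C^0_b(S)$ (standard under bounded geometry) dualises to a uniform bound on $\|\delta_m v\|_{W^{-k}}$, and combined with the $\delta$-function identity already exploited in the proof of Lemma~\ref{lem AB BA} plus an elementary functional-calculus estimate of $\|A(t)\|_{W^{-k}\to L^2}$, this gives the kernel bound directly. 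The paper's route is shorter once the heat-kernel estimate is granted and yields a sharper exponent; yours is more self-contained and, notably, more general: the only input you actually use about $A(t)$ is the polynomial bound on $\|A(t)\|_{W^{-k}\to L^2}$, which is already part of Definition~\ref{def asympt local} for any element of $\cU_{-\infty}^L(S)$. Together with the dual bound on $\|A(t)\|_{L^2\to W^k}$ (also automatic from that definition), your argument therefore shows that \emph{every} $A\in\cU_{-\infty}^L(S)$ satisfies the kernel conditions of Definition~\ref{def AL}, i.e.\ that $\cA_{-\infty}^L(S)=\cU_{-\infty}^L(S)$; this is worth recording.
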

\begin{proof}
For $t>0$, let $\kappa_t$ be the Schwartz kernel of $D^j e^{-t^2D^2}$. 
By Proposition 4.2 in \cite{CGRS14}, the fact that $M$ has bounded geometry implies that there are $C, a>0$ such that for all $m,m' \in M$ and all $ t\in (0,1]$,
\[
\|\kappa_{t}(m, m')\| \leq C t^{-(\dim(M)+j)} e^{-a d(m,m')^2/t^2} \leq C t^{-(\dim(M)+j)} .
\]
So for all $m \in M$ and $\varepsilon>0$, the left hand sides of  \eqref{eq int L2 eps} are less than or equal to
\[
C^2 \vol(B(m, r))  t^{-2(\dim(M)+j)}.
\]
Because $M$ has bounded geometry, the volume of $B(m,r)$ is bounded in $m$.
By Remark \ref{rem int cond ball} and Proposition \ref{prop pos j}, this implies the claim.
%
%
%
\end{proof}

\subsection{The index}

\begin{definition}\label{def ALSD}
Let $\cA_{-\infty}^L(S; D) \subset \cA_{-\infty}^L(S)^{\Phi}$ be the linear subspace of $A \in \cA_{-\infty}^L(S)$ for which 
\begin{enumerate}
\item
there are functions $f\colon (0, \infty) \to (0, \infty)$ and $g \in \cS(\R)$ such that for all $t>0$, 
\[
A(t) = f(t) g(t D).
\]
\item  for all $j = 0, 1, 2, \ldots$, there are  $C_j,a_j>0$ such that
 for all  $m \in M$ and $t \in (0,1]$,
\[
\begin{split}
\int_{M} \|\kappa_{j,t}(m,m')\|^2\, dm'  &\leq  C_jt^{-a_j};\quad \text{and}\\
\int_{M} \|\kappa_{j, t}(m',m)\|^2 \, dm'  &\leq  C_jt^{-a_j},
\end{split}
\]
where $\kappa_{j, t}$ is the Schwartz kernel of $D^j A(t)$. 
\end{enumerate}
Let $\cU^L(S; D)$ be the vector space of all paths $A\colon (0, \infty) \to \cU(S)$ such that
\[
\begin{split}
A \cA_{-\infty}^L(S; D) &\subset \cA_{-\infty}^L(S; D); \quad \text{and}\\
  \cA_{-\infty}^L(S; D) A&\subset \cA_{-\infty}^L(S; D).
  \end{split}
\] 
\end{definition}

\begin{lemma}\label{lem ideal AD}
The space $\cU^L(S; D)$ is an algebra, and $\cA_{-\infty}^L(S; D)$ is a two-sided ideal in $\cU^L(S; D)$.
\end{lemma}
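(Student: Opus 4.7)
The plan is to reduce both claims to a single technical step: showing that $\cA_{-\infty}^L(S; D)$ is itself closed under composition. Once this is established, the inclusion $\cA_{-\infty}^L(S; D) \subset \cU^L(S; D)$ follows immediately, and the rest of the lemma is formal manipulation of the defining conditions of $\cU^L(S; D)$.

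For the main technical step, let $A, B \in \cA_{-\infty}^L(S; D)$ with $A(t) = f_A(t) g_A(tD)$ and $B(t) = f_B(t) g_B(tD)$. All four factors are either scalars or functions of $D$, hence mutually commute, giving $(AB)(t) = (f_A f_B)(t) \cdot (g_A g_B)(tD)$. Since $\cS(\R)$ is closed under products, this verifies condition (1) of Definition \ref{def ALSD}. For condition (2) and a given $j \geq 0$, write $D^j(AB)(t) = (D^j A(t)) \cdot B(t)$, so the Schwartz kernel $\mu_{j,t}$ of $D^j(AB)(t)$ is the convolution of the kernels $\kappa_{j,t}^A$ of $D^j A(t)$ and $\lambda_t^B$ of $B(t)$. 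By Cauchy--Schwarz, $\|\mu_{j,t}(m,m')\|^2$ is bounded by the product of $\int_M \|\kappa_{j,t}^A(m,m'')\|^2\, dm''$ and $\int_M \|\lambda_t^B(m'',m')\|^2\, dm''$; these are bounded uniformly in their free variables by the hypothesis on $A$ (with parameter $j$) and the hypothesis on $B$ (with parameter $0$, using the second bound in Definition \ref{def AL}). A symmetric argument bounds $\|\mu_{j,t}(m',m)\|^2$. Integrating over $B(m,1)$, using bounded geometry to control $\vol(B(m,1))$ uniformly in $m$, and invoking Remark \ref{rem int cond ball} then yields the required estimates. Finally, $AB \in \cA_{-\infty}^L(S)^{\Phi}$ by Proposition \ref{prop ALS alg} together with the fact that commutation with $\Phi$ is preserved under composition.

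Having established closure of $\cA_{-\infty}^L(S; D)$ under composition, one immediately obtains $\cA_{-\infty}^L(S; D) \subset \cU^L(S; D)$. Next, $\cU^L(S; D)$ is closed under composition by a purely formal argument: for $A, B \in \cU^L(S; D)$ and $C \in \cA_{-\infty}^L(S; D)$, the associativity $(AB)C = A(BC)$ and $C(AB) = (CA)B$ combined with the defining conditions of $\cU^L(S; D)$, applied twice, keep the product in $\cA_{-\infty}^L(S; D)$. Thus $\cU^L(S; D)$ is an algebra. The ideal property of $\cA_{-\infty}^L(S; D)$ in $\cU^L(S; D)$ is built directly into the definition of the latter.

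The only genuinely technical point is the estimate for the kernel of $D^j(AB)(t)$, where the product structure of kernels must be compatible with the $L^2$ integral bounds of Definition \ref{def AL}. The obstacle is that these bounds involve integration over all of $M$ and do not directly combine well, but this is circumvented by passing to integration over $B(m,1)$ via Remark \ref{rem int cond ball} and exploiting bounded geometry, mirroring the argument in Proposition \ref{prop ALS alg}.
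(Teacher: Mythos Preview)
Your proposal is correct and follows essentially the same approach as the paper: both identify closure of $\cA_{-\infty}^L(S;D)$ under composition as the only nontrivial point, dispatch condition (1) via the algebra structure of $\cS(\R)$, and handle condition (2) by the Cauchy--Schwarz/ball-integral argument of Proposition \ref{prop ALS alg}. You simply spell out in detail what the paper summarises in one sentence, and you also make the formal algebra and ideal verifications explicit.
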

\begin{proof}
The only nontrivial point to check is that $\cA_{-\infty}^L(S; D)$ is closed under composition. The first point in Definition \ref{def ALSD} is clearly preserved under composition. One can show that the second point in Definition \ref{def ALSD} is  preserved under composition in a similar way to the proof of Proposition \ref{prop ALS alg}.
\end{proof}

\begin{lemma}\label{lem D Q}
The algebra  $\cU^L(S; D)$ contains the constant path $D$ and the path $Q(t) = \frac{1-e^{-t^2D^2}}{D}$. 
\end{lemma}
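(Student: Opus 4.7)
The plan is to verify that the constant path $D$ and the path $Q$ each satisfy the two multiplier conditions in Definition \ref{def ALSD}, namely that $B \cdot A$ and $A \cdot B$ lie in $\cA_{-\infty}^L(S; D)$ for every $A \in \cA_{-\infty}^L(S; D)$, where $B$ stands for $D$ or $Q(t)$. I first check that the paths land in $\cU(S)$: the Dirac operator $D$ lies in $\cU_1(S)$ by standard bounded-geometry estimates, and $Q(t) = t\, r(tD)$ where $r(x) = (1 - e^{-x^2})/x$ lies in $\cS^{-1}(\R)$, so Theorem 5.5 of \cite{Roe88II} gives $Q(t) \in \cU_{-1}(S)$ for each $t > 0$.

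Write $A(t) = f(t)\, g(tD)$ with $f > 0$ and $g \in \cS(\R)$. For multiplication by $D$, the key identity is $D\, g(tD) = t^{-1}\, \tilde g(tD)$ with $\tilde g(x) = x g(x) \in \cS(\R)$, so $(DA)(t) = (f(t)/t)\, \tilde g(tD)$ has the required functional-calculus form. The $L^2$-kernel bounds for $D^j(DA)(t) = D^{j+1}A(t)$ are already part of the hypothesis on $A$. Since $D$ commutes with $g(tD)$, we have $AD = DA$, so the same calculation handles both sides.

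For $Q$, I write $Q(t) A(t) = t f(t)\, (rg)(tD)$; since $g$ is Schwartz and $r$ is smooth and bounded with bounded derivatives, $rg$ lies in $\cS(\R)$, giving the functional-calculus form with positive scalar factor $tf(t)$. For the $L^2$-kernel bounds I split on $j$. When $j = 0$, I write $Q(t) A(t) = t \cdot r(tD) \cdot A(t)$; since $r(tD)$ is bounded on $L^2$ with operator norm at most $\|r\|_\infty < \infty$, the estimate at \eqref{eq AB BA 1} transfers the $L^2$-kernel integrals of $A(t)$ to those of $r(tD) A(t)$ up to a factor $\|r\|_\infty^2$, and the extra factor $t$ only improves the $t$-exponent. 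When $j \geq 1$, I use the algebraic identity $D^j Q(t) = D^{j-1}(1 - e^{-t^2 D^2})$, giving
\[
D^j Q(t) A(t) = D^{j-1} A(t) - e^{-t^2 D^2}\, D^{j-1} A(t).
\]
The first summand is controlled by hypothesis on $A$, and $\|e^{-t^2 D^2}\|_{L^2 \to L^2} \leq 1$ lets me apply the same Schwartz-kernel-to-$L^2$-norm estimate of \eqref{eq AB BA 1} to the second term. The right multiplications $AD$ and $AQ$ coincide with the left ones by functional calculus.

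The one subtle point is handling both orderings of the $L^2$-kernel integrals, namely $\int_M \|\kappa(m,m')\|^2\, dm'$ and $\int_M \|\kappa(m',m)\|^2\, dm'$; one goes directly through the estimate, the other via the adjoint trick from the proof of Lemma \ref{lem AB BA}, which requires the commutativity $[r(tD), A(t)] = [e^{-t^2 D^2}, A(t)] = 0$. This commutativity is automatic since all the relevant operators are functional calculi of $D$.
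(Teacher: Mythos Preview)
Your proof follows essentially the same route as the paper's. Both arguments verify the functional-calculus form (rewriting the product as $\tilde f(t)\,\tilde g(tD)$ with $\tilde g\in\cS(\R)$) and then establish the $L^2$-kernel bounds on $D^j$ times the product using $L^2$-boundedness and commutativity. The paper handles the kernel bounds uniformly in $j$ by observing that $Q(t)$ is $L^2$-bounded and commutes with $D^jA(t)$, then appealing to the estimate in the proof of Lemma~\ref{lem AB BA}; your case split (using boundedness of $r(tD)$ for $j=0$ and the identity $D^jQ(t)=D^{j-1}(1-e^{-t^2D^2})$ for $j\ge1$) is a harmless variation reaching the same conclusion. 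The paper also justifies that $h=r$ has all derivatives bounded by computing its Fourier transform explicitly, whereas you simply assert this; either way the product $rg$ is Schwartz.

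One small omission: membership in $\cA_{-\infty}^L(S;D)$ requires, in addition to conditions (1) and (2) of Definition~\ref{def ALSD}, that the path lie in $\cA_{-\infty}^L(S)^{\Phi}$, and in particular in $\cU_{-\infty}^L(S)$. You verify (1) and (2) but never check that $DA$ and $QA$ are in $\cU_{-\infty}^L(S)$. The paper addresses this for $AQ$ by invoking Proposition~\ref{prop func calc UkL} (applied to $hg\in\cS(\R)$); for $D$ it is folded into the one-line remark ``It follows from Definition~\ref{def ALSD}''. This is easily filled---e.g.\ via Lemma~\ref{lem AB BA} or Proposition~\ref{prop func calc UkL}---but it should be stated.
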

\begin{proof}
It follows from Definition \ref{def ALSD} that  $D \in \cU^L(S; D)$. To see that $Q \in \cU^L(S; D)$, consider the function $h(x) = \frac{1-e^{-x^2}}{x}$. Let $A \in \cA_{-\infty}^L(S; D)$, and let $f\colon (0, \infty) \to (0, \infty)$ and $g \in \cS(\R)$ be such that for all $t>0$, 
we have $
A(t) = f(t) g(tD)$.
Then
\[
A(t)Q(t) = f(t) t (hg)(tD).
\]
The Fourier transform of $h$ is
\[
\hat h(\xi) = -i \pi \left( \erf(-\xi/2)+\sgn(\xi) \right).
\]
This function decays faster than any rational function in $\xi$. So $h$ is smooth, and all its derivatives are (square integrable, and hence) bounded.
So $hg \in \cS(\R)$, and therefore $AQ$ is of the desired form.

 It follows from Proposition \ref{prop func calc UkL} that $AQ \in \cU_{-\infty}^L(S)$. The second point in Definition \ref{def ALSD} follows because $Q(t)$ is bounded and commutes with $A(t)$, analogously to the proof of Lemma \ref{lem AB BA}.
\end{proof}

\begin{remark}
Lemma \ref{lem D Q} is the reason why we use the subalgebra $\cA_{-\infty}^L(S; D) \subset \cA_{-\infty}^L(S)^{\Phi}$. It is less obvious to us if $Q$ is a multiplier of $\cA_{-\infty}^L(S)^{\Phi}$. 
\end{remark}

\begin{proposition}\label{prop D Fred}
The image of $D$ in $\cU^L(S; D)/\cA_{-\infty}^L(S; D)$ is invertible.
\end{proposition}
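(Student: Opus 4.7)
The natural candidate inverse is the path $Q$ from Lemma \ref{lem D Q}. Since $D$ and $Q(t)$ are both functions of $D$ via functional calculus, they commute, and
\[
D \cdot Q(t) = Q(t) \cdot D = \Id - e^{-t^2 D^2}
\]
for every $t>0$. Both $D$ and $Q$ lie in $\cU^L(S; D)$ by Lemma \ref{lem D Q}, and the constant path $\Id$ trivially lies in $\cU^L(S; D)$. So the whole question reduces to showing that the path
\[
E\colon t \mapsto e^{-t^2 D^2}
\]
lies in the ideal $\cA_{-\infty}^L(S; D)$. Once this is established, the image of $Q$ is a two-sided inverse to the image of $D$ in the quotient $\cU^L(S;D)/\cA_{-\infty}^L(S;D)$, which is precisely the claim.

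To verify $E \in \cA_{-\infty}^L(S; D)$, I would check the two conditions of Definition \ref{def ALSD} in turn. For the first condition, take $f\equiv 1$ and $g(x) = e^{-x^2}$; this $g$ lies in $\cS(\R)$ (indeed in every $\cS^a(\R)$), and $E(t) = g(tD)$ by construction. For the second condition, one needs, for each integer $j \geq 0$, constants $C_j, a_j>0$ such that the Schwartz kernel $\kappa_{j,t}$ of $D^j E(t) = D^j e^{-t^2 D^2}$ satisfies the required $L^2$ integral bounds for all $m \in M$ and $t \in (0,1]$. But this is exactly the content of Proposition \ref{prop pos j A}, which says that $t \mapsto D^j e^{-t^2 D^2}$ lies in $\cA_{-\infty}^L(S)^{\Phi}$ for every $j \geq 0$; unpacking the definition of $\cA_{-\infty}^L(S)$ for each such $j$ gives constants $C_j, a_j$ as required. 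The $\Phi$-invariance is automatic because $\Phi$ commutes with $D$.

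With $E \in \cA_{-\infty}^L(S;D)$ in hand, the computation above gives $D Q = Q D = \Id - E$ inside $\cU^L(S;D)$, hence $[D][Q] = [Q][D] = [\Id]$ in $\cU^L(S;D)/\cA_{-\infty}^L(S;D)$, proving that the image of $D$ is invertible.

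The only genuinely delicate point is verifying condition (2) of Definition \ref{def ALSD} for $E$; if one did not already have Proposition \ref{prop pos j A} recording the kernel estimates for all powers $D^j e^{-t^2 D^2}$ uniformly, this would be the step requiring work (Gaussian off-diagonal bounds from bounded geometry, as in \cite{CGRS14}). Since that lemma is available, the present proof is essentially an assembly of the functional-calculus identity $1 = xQ(x) + e^{-t^2 x^2}$ evaluated at $x = D$ with $Q(x) = (1-e^{-t^2 x^2})/x$, combined with the heat-kernel estimates already proved.
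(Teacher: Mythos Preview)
Your proof is correct and follows essentially the same approach as the paper: use $Q$ from Lemma \ref{lem D Q} as the parametrix, compute $1-DQ(t)=e^{-t^2D^2}$, and invoke Proposition \ref{prop pos j A} to place this remainder in $\cA_{-\infty}^L(S;D)$. The paper's proof is terser---it simply cites Proposition \ref{prop pos j A} without unpacking the two conditions of Definition \ref{def ALSD}---whereas you spell out that $f\equiv 1$, $g(x)=e^{-x^2}$ handle condition (1) and that Proposition \ref{prop pos j A} applied for each $j\geq 0$ gives condition (2); this extra detail is useful and harmless.
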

\begin{proof}
Let $Q \in \cU^L(S; D)$ be as in Lemma \ref{lem D Q}. Then for all $t>0$, 
\[
1-DQ(t) = e^{-t^2D^2}.
\]
So $1-DQ \in \cA_{-\infty}^L(S; D)$ by Proposition \ref{prop pos j A}. 
\end{proof}

By Proposition \ref{prop D Fred}, the restriction of $D$ to even-graded sections defines a class
\beq{eq D K1}
[D^+] \in K_1(\cU^L(S; D)/\cA_{-\infty}^L(S; D)).
\eeq
\begin{definition}\label{def index K}
The class
\beq{eq indDL}
\ind_D^L(D) \in K_0(\cA_{-\infty}^L(S; D))
\eeq
is the image of \eqref{eq D K1} under the boundary map
\beq{eq bdry K}
\partial\colon K_1(\cU^L(S; D)/\cA_{-\infty}^L(S; D)) \to K_0(\cA_{-\infty}^L(S; D)).
\eeq
The class
\beq{eq indD}
\ind^L(D) \in K_0(\cA_{-\infty}^L(S)^{\Phi})
\eeq
is the image of \eqref{eq indDL} under the  map 
\[
\iota_*\colon K_0(\cA_{-\infty}^L(S; D)) \to K_0(\cA_{-\infty}^L(S)^{\Phi})
\]
induced by the inclusion map $\iota\colon \cA_{-\infty}^L(S; D) \to \cA_{-\infty}^L(S)^{\Phi}$.
\end{definition}
\begin{remark}
As in \cite{Roe88I}, we use \emph{algebraic} $K$-theory to define the indices \eqref{eq indDL} and \eqref{eq indD}, because we do not consider topologies on the algebras used.
\end{remark}
\begin{remark}
The index \eqref{eq indDL} lies in a $K$-theory group depending on $D$, and is therefore less suitable for constructing canonical invariants of $M$. The  index \eqref{eq indD} lies in a $K$-theory group independent of $D$; we expect that this can even be mapped to a $K$-theory group independent of $S$ analogously to Section 7 in \cite{Roe88I}. However, it is a priori possible that the index \eqref{eq indDL} is nonzero in cases when \eqref{eq indD} is zero, so \eqref{eq indDL}  could potentially be more refined, for example as an obstruction to positive scalar curvature.
\end{remark}

\begin{proposition}\label{prop Dinv ULSD}
If $D$ is invertible, then $\cU^L(S; D)$ contains the constant path $D^{-1}$.
\end{proposition}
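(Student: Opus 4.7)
The goal is to show that for every $B \in \cA_{-\infty}^L(S; D)$, both $D^{-1} B$ and $B D^{-1}$ lie in $\cA_{-\infty}^L(S; D)$. My organising device is the operator identity
\[
D^{-1} = Q(t) + D^{-1} e^{-t^2 D^2}, \qquad t>0,
\]
which follows from $D Q(t) = 1 - e^{-t^2 D^2}$ (cf.\ the proof of Proposition \ref{prop D Fred}) combined with the invertibility of $D$. The $Q$-piece is handled by Lemma \ref{lem D Q} ($Q \in \cU^L(S; D)$) together with Lemma \ref{lem ideal AD} (ideal property): so $Q(t) B(t)$ and $B(t) Q(t)$ lie in $\cA_{-\infty}^L(S; D)$. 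The problem thus reduces to showing that the path $t \mapsto D^{-1} e^{-t^2 D^2} B(t)$ lies in $\cA_{-\infty}^L(S; D)$, and symmetrically on the right.

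Write $B(t) = f(t) g(tD)$ with $g \in \cS(\R)$, and set $\hat g(y) := e^{-y^2} g(y) \in \cS(\R)$, so the residual equals $f(t) D^{-1} \hat g(tD)$. The Taylor decomposition $\hat g(y) = \hat g(0) + y\, \eta(y)$ with $\eta(y) := (\hat g(y) - \hat g(0))/y$ gives
\[
D^{-1} \hat g(tD) = \hat g(0) D^{-1} + t\, \eta(tD).
\]
When $g(0) = 0$, one has $\hat g(0) = 0$ and $\eta \in \cS(\R)$, directly yielding the form $f'(t) g'(tD)$ required by condition (1) of Definition \ref{def ALSD}. In general, however, $\eta(y) \sim -\hat g(0)/y$ at infinity, so $\eta \in \cS^{-1}(\R) \setminus \cS(\R)$, and one faces the residual $\hat g(0) f(t) D^{-1}$. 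The key idea is to exploit the spectral gap of $D$ jointly with the Gaussian factor from $e^{-t^2 D^2}$: since $\spec(tD) \subset \{|y| \geq t\varepsilon\}$ for the spectral gap $\varepsilon>0$ of $D$, candidate Schwartz functions may be modified freely on $(-t\varepsilon, t\varepsilon)$, and the super-polynomial decay of $e^{-y^2}$ then permits constructing a single $g' \in \cS(\R)$ and $f': (0, \infty) \to (0,\infty)$ with $D^{-1} e^{-t^2 D^2} B(t) = f'(t) g'(tD)$.

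Condition (2) of Definition \ref{def ALSD} is inherited from the heat kernel bounds: for $j \geq 1$, the identity $D^j(D^{-1} e^{-t^2 D^2} B(t)) = D^{j-1} e^{-t^2 D^2} B(t)$ reduces matters to Proposition \ref{prop pos j A} combined with the bounds for $B$; for $j = 0$, boundedness of $D^{-1}$ on $L^2(S)$ provides the required $L^2$-kernel bound via a Cauchy--Schwarz argument analogous to that in the proof of Lemma \ref{lem AB BA}. The main obstacle is the construction in the previous paragraph when $g(0) \ne 0$: absorbing the residual $\hat g(0) D^{-1}$ (together with $t\,\eta(tD)$) into a single $f'(t) g'(tD)$ with a genuine Schwartz $g'$ is the technical heart of the proof, requiring a careful interplay between the spectral gap of $D$ and the Gaussian tail of $e^{-t^2 D^2}$.
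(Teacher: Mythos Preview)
Your decomposition $D^{-1} = Q(t) + D^{-1} e^{-t^2 D^2}$ is a detour. The paper's argument is shorter and more direct: it writes $D^{-1} = (f/x)(D)$ for a cutoff $f$ vanishing near $0$ (so that $x\mapsto f(x)/x$ lies in $\cS^{-1}(\R)$, using the spectral gap), invokes Lemma~\ref{lem AB BA} directly---its hypotheses on $A$ are met for every $A \in \cA_{-\infty}^L(S;D)$ by Proposition~\ref{prop func calc UkL}---to obtain $D^{-1}A = AD^{-1} \in \cA_{-\infty}^L(S)$, and then checks condition~(2) of Definition~\ref{def ALSD} via the $L^2$-boundedness of $D^{-1}$, exactly as you outline for $j=0$. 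Your splitting off of $Q$ gains nothing: $Q$ is already a multiplier by Lemma~\ref{lem D Q}, and the residual $D^{-1}e^{-t^2D^2}B$ lands you back at the same obstacle with condition~(1).

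That obstacle is real, and your proposed resolution does not work. Take $B(t) = e^{-t^2D^2}$, so $g(y)=e^{-y^2}$ and $g(0)=1$. If $D^{-1}e^{-t^2D^2} = f'(t)\,g'(tD)$ with $g'\in\cS(\R)$ and $f'\colon(0,\infty)\to(0,\infty)$, spectral evaluation gives $f'(t)\,g'(t\lambda) = \lambda^{-1}e^{-t^2\lambda^2}$ for every $\lambda\in\spec(D)$ and every $t>0$. When $\spec(D)$ contains a nondegenerate interval (as is typical for Dirac operators on noncompact manifolds), comparing two values of $\lambda$ at the same value of $y=t\lambda$ forces $f'(t)=Ct$ for some constant $C>0$, and then $g'(y) = e^{-y^2}/(Cy)$ on all of $\R\setminus\{0\}$---which is not smooth at $0$, hence not Schwartz. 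The spectral gap only allows $g'$ to be altered on $(-t\varepsilon, t\varepsilon)$ for each \emph{fixed} $t$; as $t\downarrow 0$ this shrinks to $\{0\}$, so no single $t$-independent Schwartz function can do the job, and the Gaussian decay at infinity is irrelevant to the pole at the origin. The paper's own proof does not verify condition~(1) either, simply asserting that ``this path of operators also satisfies the conditions in Definition~\ref{def ALSD}''; so this is a gap shared by both arguments, not one you have managed to close.
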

\begin{proof}
Fix $c>0$ such that $[-c,c]$ is disjoint from the spectrum of  $D$. 
Let $f \in C^{\infty}(\R)$ be such that
\[
f(x) = \left \{ 
\begin{array}{ll}
0 & \text{if $|x| \leq c/2$};\\
1 & \text{if $|x| \geq c$}.
\end{array}
\right.
\]
Then $x \mapsto f(x)/x$ lies in $\cS^{-1}(\R)$, so $D^{-1} = f(D)/D \in \cU_{-1}(S)$ by Theorem 5.5 in \cite{Roe88I}. By Proposition \ref{prop func calc UkL}, an element $A \in \cA_{-\infty}^L(S; D)$ satisfies the assumptions in Lemma \ref{lem AB BA}. So $D^{-1}A = AD^{-1}$ lies in $\cA_{-\infty}^L(S)$. And this path of operators also satisfies the conditions in Definition \ref{def ALSD}, where for the second condition we use $L^2$-boundedness of $D^{-1}$, analogously to the proof of Lemma \ref{lem AB BA}. So $D^{-1}A = AD^{-1}$ lies in $\cA_{-\infty}^L(S; D)$.
\end{proof}

\begin{proposition}\label{prop inv index zero}
If $D$ is invertible, then the indices \eqref{eq indDL} and \eqref{eq indD} equal zero.
\end{proposition}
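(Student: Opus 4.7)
The strategy is to use exactness of the six-term sequence in algebraic $K$-theory to show that, when $D$ is invertible, the class $[D^+]$ already lifts to $K_1$ of the whole algebra $\cU^L(S; D)$, so its image under the boundary map must vanish.

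First, by Proposition \ref{prop Dinv ULSD}, invertibility of $D$ places the constant path $D^{-1}$ in $\cU^L(S; D)$; combined with Lemma \ref{lem D Q}, this means that $D$ is a unit in $\cU^L(S; D)$ itself (after adjoining a unit if necessary), with inverse $D^{-1}$ in the same algebra, not merely a Fredholm-type element invertible modulo the ideal $\cA_{-\infty}^L(S; D)$. Its even-to-odd part $D^+$ is consequently invertible in the appropriate (matrix) corner of $\cU^L(S; D)$, and hence defines a class in $K_1(\cU^L(S; D))$ whose image under the quotient map is precisely the class $[D^+] \in K_1(\cU^L(S; D)/\cA_{-\infty}^L(S; D))$ from Definition \ref{def index K}.

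Second, the relevant segment of the six-term exact sequence reads
\[
K_1(\cU^L(S; D)) \longrightarrow K_1(\cU^L(S; D)/\cA_{-\infty}^L(S; D)) \xrightarrow{\partial} K_0(\cA_{-\infty}^L(S; D)).
\]
Since the previous step shows $[D^+]$ lies in the image of the left-hand map, exactness forces $\ind_D^L(D) = \partial[D^+] = 0$. Functoriality of $\iota_*$ then yields $\ind^L(D) = \iota_*(\ind_D^L(D)) = 0$ as well.

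The only conceptual point to verify carefully is the compatibility of the $K_1$-lift with the grading conventions used to define $[D^+]$ (i.e.\ that the inverse constructed from $D^{-1}$ in $\cU^L(S; D)$ really does deliver a preimage of $[D^+]$ under the quotient map in $K_1$). This is essentially formal once Proposition \ref{prop Dinv ULSD} is available, so no serious technical obstacle is anticipated.
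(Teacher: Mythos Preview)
Your proposal is correct and follows essentially the same approach as the paper: use Proposition \ref{prop Dinv ULSD} to lift $[D^+]$ to $K_1(\cU^L(S; D))$, then invoke exactness of the $K$-theory sequence to conclude that its image under $\partial$ vanishes, and finally push forward via $\iota_*$.
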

\begin{proof}
If $D$ is invertible, then $D^+$ defines a class in $K_1(\cU^L(S; D))$ by Proposition \ref{prop Dinv ULSD}. The image of this class in $ K_1(\cU^L(S; D)/\cA_{-\infty}^L(S; D))$ is \eqref{eq D K1}. Then the image of the latter class under \eqref{eq bdry K} is now zero, because the composition of two consecutive maps in the $K$-theory exact sequence is zero.
\end{proof}

\section{An asymptotic trace property}\label{sec trace}


We return to the situation of Subsections \ref{sec exhaust} and \ref{sec def trace}. Let $U$ be as in Subsection \ref{sec exhaust}, fix a $U$-regular exhaustion $(M_j)_{j=1}^{\infty}$ of $M$, and let $I$ be a functional on $|\Omega_b|(M)$ associated to $(U_j)_{j=1}^{\infty}$. By Proposition \ref{prop U-infty bdd}, every path of operators $A \in \cA_{-\infty}^L(S)$ defines a function
\[
t \mapsto \Tr^U_{\Phi}(A(t))
\]
as in \eqref{eq def TrU}. 

The main result of this section is the following ``asymptotic trace property'' of this construction. This will allow us to extract a numerical index from the $K$-theoretic index of Definition \ref{def index K}. 
\begin{theorem}\label{thm asympt trace}
For all $A \in \cA_{-\infty}^L(S)^{\Phi}$ and $B \in \cA_{-\infty}^L(S)$, 
\[
\lim_{t \downarrow 0} \Tr_{\Phi}^U (A(t)B(t) - B(t) A(t)) = 0.
\]
\end{theorem}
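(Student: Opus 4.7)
The plan is to express $\int_{U_j}\alpha_t^{[A,B]}$ at the level of kernels, use the $\Phi$-equivariance of $A$ together with Fubini to reduce it to a ``boundary'' integral over pairs $(z',m)$ with exactly one of them in $U_j$, and then estimate this boundary integral using the off-diagonal $L^2$-decay from Theorem \ref{thm int outside ball}, the invariance of $U$ under $\varphi$ together with the displacement bound $d(\varphi(m),m)\geq\delta$ off $U$, and the $U$-regularity of the exhaustion $(M_j)$.

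Let $\kappa^A_t, \kappa^B_t$ denote the Schwartz kernels of $A(t), B(t)$ and put
\[
F_t(z',m) := \tr\bigl(\Phi_{\varphi^{-1}(z')}\,\kappa^B_t(\varphi^{-1}(z'),m)\,\kappa^A_t(m,z')\bigr).
\]
Expanding $\kappa^{BA}_t$ directly gives $\int_{U_j}\alpha_t^{BA} = \iint_{z'\in U_j,\, m\in M} F_t(z',m)\,dm\,dz'$. For $\alpha_t^{AB}$ one applies the kernel identity $\Phi_{\varphi^{-1}(m)}\kappa^A_t(\varphi^{-1}(m),z) = \kappa^A_t(m,\varphi(z))\Phi_z$ expressing $\Phi A = A\Phi$, then cyclicity of trace, the isometric change of variable $z'=\varphi(z)$, and Fubini, to obtain $\int_{U_j}\alpha_t^{AB} = \iint_{z'\in M,\, m\in U_j} F_t(z',m)\,dz'\,dm$. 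Subtracting gives
\[
\int_{U_j}\alpha_t^{[A,B]} = \iint_{z'\in M\setminus U_j,\, m\in U_j} F_t \;-\; \iint_{z'\in U_j,\, m\in M\setminus U_j} F_t.
\]

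To bound this, fix $r\in(0,\delta/2)$ and split each domain by $d(m,z')>r$ versus $d(m,z')\leq r$. On the far part, Cauchy--Schwarz together with Theorem \ref{thm int outside ball} and the $L^2$-bounds of Definition \ref{def AL} (after the isometric substitution $z''=\varphi^{-1}(z')$ turning $\kappa^B_t(\varphi^{-1}(z'),m)$ into $\kappa^B_t(z'',m)$) bounds the contribution to $\tfrac{1}{\vol(U_j)}\bigl|\int_{U_j}\alpha_t^{[A,B]}\bigr|$ by a constant multiple of $\sqrt{v_A(r,t)\,t^{-a_B}} + \sqrt{v_B(r,t)\,t^{-a_A}}$. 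On the close part, decompose $M\setminus U_j = (M\setminus U)\sqcup(U\setminus M_j)$. In the subcase $z'\in M\setminus U$ (and its $m\leftrightarrow z'$-symmetric variant), the invariance of $U$ forces $\varphi^{-1}(z')\notin U$ and hence $d(\varphi^{-1}(z'),z')\geq\delta$; combined with the isometry identity $d(m,\varphi^{-1}(z'))=d(\varphi(m),z')$ and $r<\delta/2$, this gives $d(\varphi^{-1}(z'),m)\geq\delta/2$, so Theorem \ref{thm int outside ball} bounds this piece by a constant multiple of $\sqrt{v_B(\delta/2,t)\,t^{-a_A}} + \sqrt{v_A(\delta/2,t)\,t^{-a_B}}$. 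In the subcase $z'\in U\setminus M_j$, the constraint $d(m,z')\leq r$ forces $m\in U_j\setminus\Pen_U^-(U_j,r)$, so this piece is bounded by $\sqrt{C_A C_B}\,t^{-(a_A+a_B)/2}\cdot(\vol(U_j)-\vol(\Pen_U^-(U_j,r)))/\vol(U_j)$.

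Taking $\limsup_j$, the last piece vanishes by $U$-regularity \eqref{eq phi reg exh}. Since $I$ is associated to $(U_j)$, choosing a subsequence along which the $\liminf$ in Definition \ref{def associated} is realised gives $|\Tr^U_\Phi([A,B](t))|\leq \limsup_j \tfrac{1}{\vol(U_j)}\bigl|\int_{U_j}\alpha_t^{[A,B]}\bigr|$. Now let $r\to\infty$ to kill the far-pair terms, leaving $|\Tr^U_\Phi([A,B](t))|\leq C\bigl(\sqrt{v_B(\delta/2,t)}\,t^{-a_A/2} + \sqrt{v_A(\delta/2,t)}\,t^{-a_B/2}\bigr)$. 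The second bulleted condition in Definition \ref{def asympt local}, stating that $t\mapsto t^{-a}\mu_l(r,t)$ is bounded on $(0,1]$ for every $a\geq 0$, forces $v(\delta/2,t)$ to decay faster than any polynomial as $t\downarrow 0$, so the right-hand side tends to $0$. The main obstacle is precisely the close subcase with $z'\in M\setminus U$: there $r$ cannot be enlarged past $\delta/2$, so the associated term cannot be made small by letting $r\to\infty$, and the argument relies essentially on the strong $t\downarrow 0$ decay of $v$ built into Definition \ref{def asympt local}. This is why the statement is only an \emph{asymptotic} trace property.
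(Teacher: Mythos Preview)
Your argument is essentially correct and follows the same ideas as the paper's proof, but there is one genuine inconsistency. You fix $r\in(0,\delta/2)$ at the outset (in order to force $d(\varphi^{-1}(z'),m)\geq\delta-r>\delta/2$ in the close $M\setminus U$ subcase), and then at the end you write ``let $r\to\infty$ to kill the far-pair terms''. These two requirements are incompatible.

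The fix is easy and already implicit in your own final step: you do not need $r\to\infty$ at all. For any fixed $r>0$, the far-pair bound $\sqrt{v_A(r,t)\,t^{-a_B}}+\sqrt{v_B(r,t)\,t^{-a_A}}$ tends to $0$ as $t\downarrow 0$, by exactly the property of $v$ you invoke for the $\delta/2$ terms (boundedness of $t\mapsto t^{-a}v(r,t)$ on $(0,1]$ for \emph{every} $a\geq 0$ forces super-polynomial decay in $t$). So fix any $r\in(0,\delta/2)$, take $\limsup_j$ to remove the $U$-regularity piece, pass to $|\Tr^U_\Phi([A,B](t))|$ via the associated functional, and then let $t\downarrow 0$; all remaining terms vanish. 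Alternatively, you could drop the constraint $r<\delta/2$ by observing, as the paper does in its Lemma~\ref{lem est Yj}, that for $z'\notin U$ (or $m\notin U$) the triangle inequality forces \emph{one} of $d(m,z')$, $d(\varphi^{-1}(z'),m)$ to be at least $\delta/2$, without any hypothesis on $r$.

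Aside from this, your organisation differs mildly from the paper's. You first use the $\Phi$-equivariance of $A$ to rewrite $\int_{U_j}\alpha_t^{[A,B]}$ as a single boundary integral over pairs with exactly one coordinate in $U_j$, and then split that boundary. The paper instead keeps the commutator as a difference of two kernel expressions on $U_j\times M$, decomposes $U_j\times M=V_j\sqcup W_j\sqcup X_j\sqcup Y_j$, and handles the cancellation on $V_j=U_j\times U_j$ separately (Lemma~\ref{lem est Vj}); the remaining pieces $W_j,X_j,Y_j$ play the roles of your close/$U\setminus M_j$, far, and $M\setminus U$ subcases respectively, and the paper packages the order of limits via a small abstract lemma (Lemma~\ref{lem limits}). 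Your boundary-integral reduction is a clean way to build in the cancellation, but the underlying estimates are the same.
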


\subsection{Estimates of partial integrals}\label{sec est int}

Let $A,B \in \cA_{-\infty}^L(S)$. 
For $t>0$, let $\kappa_t$ be the Schwartz kernel of $A(t)$, and let $\lambda_t$ be the Schwartz kernel of $B(t)$.

Let $\varepsilon>0$. Let $v_A$ and $v_B$ be the functions in Theorem \ref{thm int outside ball}, applied to $A$ and $B$, respectively. 
Let $r>0$ be such that $\max(v_A(r,1), v_B(r,1))<\varepsilon$. Because of the third and fourth points in Theorem \ref{thm int outside ball}, we then have for all $m \in M$ and $t \in (0,1]$, 
\beq{eq est r kappa lambda}
\begin{split}
\int_{M \setminus B(m, r)} \|{\kappa_t(m,m')}\|^2\, dm' &< \varepsilon; 
\quad \text{and} \\
\int_{M \setminus B(m, r)} \|{\lambda_t(m',m)}\|^2\, dm' &< \varepsilon.
\end{split}
\eeq

For $j \in \N$, 
consider the following subsets of $U_j \times M$:
\beq{eq Vj}
\begin{split}
V_j &:= U_j \times U_j;\\
W_j &:= \{(m,m') \in U_j \times (M \setminus M_j); \text{$d(m,m')<r$ or $d(\varphi^{-1}(m),m')<r$} \};\\
X_j &:= \{(m,m') \in U_j \times (M \setminus M_j); \text{$d(m,m')\geq r$ and $d(\varphi^{-1}(m),m')\geq r$} \};\\
Y_j &:= U_j \times (M_j \setminus U_j).
\end{split}
\eeq
These sets are disjoint, and their union is $U_j \times M$.

\begin{lemma}\label{lem est Vj}
If $A \in \cA_{-\infty}^L(S)^{\Phi}$, then
for all $t>0$ and $j \in \N$, 
\[
\int_{V_j} \tr\left( \Phi \kappa_t(\varphi^{-1}(m), m')\lambda_t(m', m) - \Phi \lambda_t(\varphi^{-1}(m), m') \kappa_t(m', m) \right)\, dm' \, dm = 0.
\]
\end{lemma}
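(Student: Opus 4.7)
The plan is to transform the first integrand into the second by successively applying the $\Phi$-equivariance of $A$, cyclicity of the pointwise trace over an appropriate fibre, and a change of variables $m' \mapsto \varphi^{-1}(m')$ that leaves $U_j$ invariant.

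First, I would translate the hypothesis that $A$ commutes with the $\Phi$-action on sections into the kernel identity $\Phi_n \kappa_t(n, n') = \kappa_t(\varphi(n), \varphi(n')) \Phi_{n'}$ for all $n, n' \in M$ (this follows from a direct change of variable computation in the equation $A \Phi = \Phi A$ applied to test sections). Setting $n = \varphi^{-1}(m)$ yields
\[
\Phi_{\varphi^{-1}(m)} \kappa_t(\varphi^{-1}(m), m') = \kappa_t(m, \varphi(m')) \Phi_{m'}.
\]
Substituting this into the first integrand and applying cyclicity of the pointwise trace (viewed as a trace of an endomorphism of $S_{m'}$) rewrites it as
\[
\tr\bigl( \Phi_{m'} \lambda_t(m', m) \kappa_t(m, \varphi(m')) \bigr).
\]

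Next, I would integrate over $V_j = U_j \times U_j$ and substitute $n := \varphi(m')$ in the $m'$-integration. The Jacobian is one since $\varphi$ is an isometry, and the domain $U_j$ is preserved because $U_j$ is $\varphi$-invariant (this comes from $\varphi$-invariance of $U$ together with the natural assumption that the exhaustion $(M_j)$ is $\varphi$-invariant, which can always be arranged when $\varphi$ lies in a compact group of isometries, the setting of the main theorem). The integrand transforms into
\[
\tr\bigl( \Phi_{\varphi^{-1}(n)} \lambda_t(\varphi^{-1}(n), m) \kappa_t(m, n) \bigr),
\]
which is precisely the second integrand of the lemma with $n$ and $m$ playing the roles of $m$ and $m'$, respectively. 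Since $V_j$ is symmetric under swapping factors, relabelling the dummy integration variables then gives the desired equality of integrals.

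The main subtlety is the bookkeeping: at each step one must track which fibres the various bundle maps connect, to confirm that cyclicity of trace and the equivariance identity are applied over the right fibres. The one essential geometric input is the $\varphi$-invariance of $U_j$, which is what makes the pivotal substitution a symmetry of the integration domain $V_j$; without it one would pick up boundary contributions supported on $(\varphi(U_j) \triangle U_j) \times U_j$.
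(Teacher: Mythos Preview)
Your proposal is correct and follows essentially the same route as the paper: use the kernel identity coming from $\Phi$-equivariance of $A$, apply cyclicity of the fibrewise trace, and then perform the substitution $m' \mapsto \varphi(m')$ to match the second integrand. You are also right to flag explicitly that the substitution requires $\varphi$-invariance of $U_j$; the paper uses this implicitly here (and again, via $\varphi$-invariance of $M_j$, in the proof of the next lemma), together with Fubini--Tonelli justified by boundedness of the kernels on the compact set $\overline{U_j}$.
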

\begin{proof}
Let $t>0$ and $j \in \N$.
Because $A(t)$ commutes with $\Phi$, we have for all $m,m' \in M$,
\[
\Phi \kappa_t(\varphi^{-1}(m), m') =\kappa_t(m, \varphi(m'))  \Phi. 
\]
Hence by the trace property of the fibre-wise trace,  
\begin{multline} \label{eq est Vj 1}
\int_{V_j} \tr\left( \Phi \kappa_t(\varphi^{-1}(m), m')\lambda_t(m', m)  \right)\, dm' \, dm \\
=
\int_{U_j} \int_{U_j} \tr\left(\Phi  \lambda_t(m', m) \kappa_t(m, \varphi(m'))   \right)\, dm \, dm'.
\end{multline}
Here we also used the Fubini--Tonelli theorem to interchange the integrals; we have absolute convergence by compactness of $\overline{U_j}$ and boundedness of $\kappa_t$ and $\lambda_t$ (see Proposition \ref{prop U-infty bdd}). Substituting $m'' = \varphi(m')$ and using $\varphi$-invariance of $dm$, we see that the right hand side of \eqref{eq est Vj 1} equals
\[
\int_{V_j} \tr\left(\Phi  \lambda_t(\varphi^{-1}(m''), m) \kappa_t(m, m'')   \right)\, dm \, dm''.
\]
\end{proof}
\begin{remark}
Lemma \ref{lem est Vj} is the one place in the proof of Theorem \ref{thm asympt trace} where we use $\Phi$-equivariance of $A$. 
\end{remark}

\begin{lemma}\label{lem est Wj}
For all $t>0$, there is an $N>0$ such that for all $j \geq N$, 
\beq{eq est Wj}
\left| \frac{1}{\vol(U_j)}\int_{W_j} \tr\left( \Phi \kappa_t(\varphi^{-1}(m), m')\lambda_t(m', m) \right)\, dm' \, dm
\right| < \varepsilon.
\eeq
\end{lemma}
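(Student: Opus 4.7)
My plan is to bound the integrand pointwise by a product of fibrewise norms, apply Cauchy--Schwarz in the $m'$ variable together with the uniform $L^2$-kernel bounds from Definition \ref{def AL}, and then invoke the $U$-regular exhaustion hypothesis to make the resulting volume ratio arbitrarily small for fixed $t$ and large enough $j$.

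Concretely, since $\Phi$ is fibrewise unitary, there is a constant $C_S$ depending only on $\rank(S)$ with
\[
\bigl|\tr\bigl(\Phi\,\kappa_t(\varphi^{-1}(m),m')\lambda_t(m',m)\bigr)\bigr| \leq C_S \|\kappa_t(\varphi^{-1}(m),m')\|\,\|\lambda_t(m',m)\|.
\]
I would decompose $W_j = W_j^{(1)}\cup W_j^{(2)}$ according to whether the defining inequality is $d(m,m')<r$ or $d(\varphi^{-1}(m),m')<r$, and estimate each piece separately.

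On $W_j^{(1)}$, the two requirements $m'\in M\setminus M_j$ and $d(m,m')<r$ force $m\in U_j\setminus \Pen^-_U(U_j,r)$. Applying Cauchy--Schwarz in $m'$ and then the $L^2$-kernel bounds of Definition \ref{def AL} (the first line of \eqref{eq int L2 AL} to $A$ at $\varphi^{-1}(m)$, the second line to $B$ at $m$), each factor is at most a constant times $t^{-a/2}$ uniformly in the spatial variables, so the $W_j^{(1)}$-integral is bounded by $C't^{-a}\vol(U_j\setminus\Pen^-_U(U_j,r))$. On $W_j^{(2)}$, I change variables via $\tilde m := \varphi^{-1}(m)$: because $\varphi$ is a measure-preserving isometry with $\varphi(U)=U$, and the exhaustion may (by averaging inside the ambient compact group of isometries) be taken $\varphi$-invariant so that $\varphi^{-1}(U_j)=U_j$, the same Cauchy--Schwarz argument produces the identical bound $C't^{-a}\vol(U_j\setminus\Pen^-_U(U_j,r))$.

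Adding the two contributions and dividing by $\vol(U_j)$ yields
\[
\frac{1}{\vol(U_j)}\left|\int_{W_j}\tr(\cdots)\,dm'\,dm\right| \leq 2C_S C' t^{-a}\cdot\frac{\vol(U_j)-\vol(\Pen^-_U(U_j,r))}{\vol(U_j)},
\]
and the $U$-regular exhaustion property forces the right-hand side to $0$ as $j\to\infty$ for the fixed $t$ at hand, so some $N$ makes it smaller than $\varepsilon$ for all $j\geq N$. The step I expect to require the most care is the $W_j^{(2)}$ estimate: the $\varphi^{-1}$ in the first slot of $\kappa_t$ means I must track how $\varphi^{-1}$ interacts with the $M_j$-penumbra, which is handled cleanly via $\varphi$-invariance of the exhaustion; without that invariance one would need to compare $\varphi^{-1}(U_j)\cap\Pen^+(M\setminus M_j,r)$ directly with $U_j\setminus \Pen^-_U(U_j,r)$, which is awkward in general.
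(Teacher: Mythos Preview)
Your argument is correct and reaches the same conclusion, but the paper's proof is more direct. Rather than splitting $W_j$ into two pieces and invoking Cauchy--Schwarz together with the $L^2$ kernel bounds of Definition~\ref{def AL}, the paper observes in one stroke that $W_j \subset (U_j \setminus \Pen_U^-(U_j, r)) \times M$: if $d(m,m')<r$ with $m' \notin M_j$ then $d(m, M\setminus M_j)<r$, while if $d(\varphi^{-1}(m),m')<r$ then $d(m,\varphi(m'))<r$ with $\varphi(m') \notin M_j$ by $\varphi$-invariance of $M_j$, so again $m \notin \Pen_U^-(U_j,r)$. No change of variables is needed. The paper then bounds the integrand pointwise by $\|\kappa_t\|_\infty\|\lambda_t\|_\infty$ (available from Proposition~\ref{prop U-infty bdd}) and bounds the $m'$-integral by the volume of $B(m,r)\cup B(\varphi^{-1}(m),r)$, which is at most a uniform constant $2V$ by bounded geometry. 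This yields
\[
2V\,\|\kappa_t\|_\infty\|\lambda_t\|_\infty \cdot \frac{\vol(U_j)-\vol(\Pen_U^-(U_j,r))}{\vol(U_j)},
\]
and one finishes via the $U$-regular exhaustion hypothesis exactly as you do.

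The paper's route is lighter: it uses only that each $A(t), B(t)$ lies in $\cU_{-\infty}(S)$, not the extra $\cA_{-\infty}^L$ structure, and it avoids both Cauchy--Schwarz and the substitution $\tilde m = \varphi^{-1}(m)$. Your route has the minor advantage that the $t$-dependence of the constant is explicit as $t^{-a}$ rather than the opaque $\|\kappa_t\|_\infty\|\lambda_t\|_\infty$, but since $t$ is fixed in this lemma that gains nothing. On the $\varphi$-invariance issue you flag at the end: the paper simply \emph{uses} $\varphi$-invariance of $M_j$ as a standing hypothesis (it is invoked verbatim in the proof), rather than arranging it by averaging; your averaging remark would only be available under the compact-group hypothesis of Theorem~\ref{thm index}, which is not in force for Theorem~\ref{thm asympt trace}.
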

\begin{proof}
We claim that for all $j$, 
\beq{eq Wj Pen}
W_j \subset \bigl( (U_j \setminus \Pen_U^-(U_j, r) \bigr) \times M.
\eeq
Indeed, suppose that $(m,m') \in W_j$. If $d(m,m')<r$, then $d(m, M \setminus M_j)<r$, so $m \not\in \Pen_U^-(U_j, r)$. And if $d(\varphi^{-1}(m),m') = d(m,\varphi(m'))<r$, then $m \not\in \Pen_U^-(U_j, r)$ because $M_j$ is $\varphi$-invariant. So \eqref{eq Wj Pen} follows.

Because $M$ has bounded geometry, there is a $V>0$ such that for all $m \in M$, the volume of $B(m, r)$ is at most $V$.
It follows from \eqref{eq Wj Pen} and the fact that $\Phi$ preserves the metric that the left hand side of \eqref{eq est Wj} is smaller than or equal to
\begin{multline*}
\frac{1}{\vol(U_j)}\int_{U_j \setminus \Pen_U^-(U_j, r)} \int_{B(m, r) \cup B(\varphi^{-1}(m), r)}
\|  \kappa_t(\varphi^{-1}(m), m') \| \| \lambda_t(m', m) \| \, dm' \, dm\\
\leq 2V \frac{\vol(U_j) - \vol(\Pen_U^-(U_j, r))}{\vol(U_j)} \|\kappa_t\|_{\infty} \|\lambda_t\|_{\infty}. 
\end{multline*}
Here we used boundedness of $\kappa_t$ and $\lambda_t$, Proposition \ref{prop U-infty bdd}. 
So the claim follows from \eqref{eq phi reg exh}.
\end{proof}

\begin{lemma}\label{lem est Xj}
For all $t \in (0,1]$ and all $j \in \N$, 
\beq{eq est Xj}
\left| \frac{1}{\vol(U_j)}\int_{X_j} \tr\left( \Phi \kappa_t(\varphi^{-1}(m), m')\lambda_t(m', m) \right)\, dm' \, dm
\right| < \varepsilon.
\eeq
\end{lemma}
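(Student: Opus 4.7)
The plan is to bound the $m'$-integral pointwise in $m$ by Cauchy--Schwarz, using that on $X_j$ the point $m'$ is at distance at least $r$ from both $m$ and $\varphi^{-1}(m)$, so that the kernel tail estimates from Theorem \ref{thm int outside ball} apply directly (with the same $r$ chosen at the start of Section \ref{sec est int}).

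First I would bound the integrand pointwise using the Hilbert--Schmidt Cauchy--Schwarz inequality for the fibre-wise trace. Writing $T_1(m,m') := \Phi_{\varphi^{-1}(m)}\kappa_t(\varphi^{-1}(m), m')$ and $T_2(m,m') := \lambda_t(m',m)$, one has
\[
|\tr(T_1 T_2)| \leq \|T_1\|_{HS}\|T_2\|_{HS} = \|\kappa_t(\varphi^{-1}(m), m')\|\, \|\lambda_t(m', m)\|,
\]
since $\Phi_{\varphi^{-1}(m)}$ is a fibrewise isometry. (I am reading $\|\cdot\|$ on Hom-bundles as the Hilbert--Schmidt norm, so that the $L^2$ integrals in Theorem \ref{thm int outside ball} agree with the quantities just produced.)

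Next, for fixed $m \in U_j$, I apply the scalar Cauchy--Schwarz inequality to the $m'$-integral:
\[
\int_{\{m':(m,m')\in X_j\}} \|\kappa_t(\varphi^{-1}(m),m')\|\, \|\lambda_t(m',m)\|\, dm'
\]
is bounded by the square root of
\[
\int_{\{m':(m,m')\in X_j\}}\! \|\kappa_t(\varphi^{-1}(m),m')\|^2\, dm' \cdot \int_{\{m':(m,m')\in X_j\}}\! \|\lambda_t(m',m)\|^2\, dm'.
\]
By definition of $X_j$, the first region is contained in $M\setminus B(\varphi^{-1}(m),r)$ and the second in $M\setminus B(m,r)$. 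Hence Theorem \ref{thm int outside ball} bounds the two factors by $v_A(r,t)$ and $v_B(r,t)$ respectively. Applying the monotonicity clause of Theorem \ref{thm int outside ball} (with exponent $a=0$), which states that $t\mapsto v(r,t)$ is non-decreasing on $(0,1]$, gives $v_A(r,t) \leq v_A(r,1) < \varepsilon$ and $v_B(r,t) \leq v_B(r,1) < \varepsilon$ for $t \in (0,1]$, by the choice of $r$ at the start of Section \ref{sec est int}. So the inner integral is at most $\sqrt{\varepsilon\cdot\varepsilon}=\varepsilon$, uniformly in $m$.

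Finally, integrating over $m \in U_j$ and dividing by $\vol(U_j)$ yields the desired estimate \eqref{eq est Xj}. There is no real obstacle here: the key point is simply that the two distance conditions defining $X_j$ are \emph{both} needed, one to control $\kappa_t$ off the diagonal (after the shift by $\varphi^{-1}$) and the other to control $\lambda_t$, so that the off-diagonal $L^2$-tails of Theorem \ref{thm int outside ball} can be combined by Cauchy--Schwarz without any remaining volume growth in $j$. The mild subtlety is ensuring uniformity in $t \in (0,1]$, which is exactly what the non-decrease property of $v$ delivers; if a slight constant loss appears from the fibrewise trace inequality, one can simply choose $r$ at the start of Section \ref{sec est int} so that $\max(v_A(r,1),v_B(r,1))$ is smaller than an appropriate multiple of $\varepsilon$.
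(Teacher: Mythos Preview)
Your proof is correct and follows essentially the same route as the paper: bound the fibrewise trace, apply Cauchy--Schwarz in the $m'$-variable to split into the $\kappa_t$-tail over $M\setminus B(\varphi^{-1}(m),r)$ and the $\lambda_t$-tail over $M\setminus B(m,r)$, invoke the off-diagonal estimates \eqref{eq est r kappa lambda}, and then average over $U_j$. Your explicit use of the monotonicity of $v$ in $t$ (with $a=0$) to pass from $v(r,t)$ to $v(r,1)<\varepsilon$ is exactly how the paper justifies \eqref{eq est r kappa lambda} at the start of Subsection~\ref{sec est int}.
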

\begin{proof}
Let $t \in (0,1]$ and all $j \in \N$. Then the left hand side of \eqref{eq est Xj} is smaller than or equal to
\begin{multline}
 \frac{1}{\vol(U_j)}\int_{U_j} \int_{M \setminus (B(m, r) \cup B(\varphi^{-1}(m), r) )} \| \kappa_t(\varphi^{-1}(m), m')\lambda_t(m', m) \| \, dm' \, dm\\
 \leq
  \frac{1}{\vol(U_j)}\int_{U_j} 
  \left( \int_{M \setminus B(\varphi^{-1}(m), r) } \| \kappa_t(\varphi^{-1}(m), m') \|^2 \, dm'  \right)^{1/2}
  \\
    \left( \int_{M \setminus B(m, r)} \| \lambda_t(m', m) \| \, dm'  \right)^{1/2}
  \, dm.
\end{multline}
By 
 \eqref{eq est r kappa lambda}, the right hand side is smaller than $\varepsilon$.
\end{proof}

\begin{lemma}\label{lem est Yj}
There is a $T>0$ such that for all $t \in (0,T)$ and $j \in \N$, 
\beq{eq est Yj}
\left| \frac{1}{\vol(U_j)}\int_{Y_j} \tr\left( \Phi \kappa_t(\varphi^{-1}(m), m')\lambda_t(m', m) \right)\, dm' \, dm
\right| < \varepsilon.
\eeq
\end{lemma}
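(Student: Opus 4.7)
The plan is to exploit the geometric separation forced by the definition of $U$. Since $Y_j = U_j \times (M_j \setminus U_j)$ and every $m' \in M_j \setminus U_j$ lies outside $U$, the second condition on $U$ gives $d(m', \varphi(m')) \geq \delta$. Combining this with the triangle inequality and the isometry identity $d(m, \varphi(m')) = d(\varphi^{-1}(m), m')$ yields
\[
\delta \leq d(m', \varphi(m')) \leq d(m', m) + d(m, \varphi(m')) = d(m', m) + d(\varphi^{-1}(m), m'),
\]
so every $(m, m') \in Y_j$ satisfies $d(m, m') \geq \delta/2$ or $d(\varphi^{-1}(m), m') \geq \delta/2$.

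Accordingly, I would split $Y_j = Y_j^1 \cup Y_j^2$, where $Y_j^1 := \{(m,m') \in Y_j : d(\varphi^{-1}(m), m') \geq \delta/2\}$ and $Y_j^2 := Y_j \setminus Y_j^1 \subset \{(m,m') \in Y_j : d(m, m') \geq \delta/2\}$, and bound the two integrals separately. On $Y_j^1$, use $|\tr(\Phi X Y)| \leq \|X\| \, \|Y\|$ (recall $\|\Phi\|=1$) followed by the Cauchy--Schwarz inequality in the inner $m'$-integral. The first factor is controlled by the off-diagonal estimate
\[
\int_{M \setminus B(\varphi^{-1}(m), \delta/2)} \|\kappa_t(\varphi^{-1}(m), m')\|^2 \, dm' \leq v_A(\delta/2, t)
\]
from Theorem \ref{thm int outside ball}, and the second by the global bound $\int_M \|\lambda_t(m', m)\|^2 \, dm' \leq C_B t^{-a_B}$ from Definition \ref{def AL}. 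Integrating over $m \in U_j$ produces a factor $\vol(U_j)$ that cancels with the normalisation, so
\[
\frac{1}{\vol(U_j)} \int_{Y_j^1} |\tr(\Phi \kappa_t(\varphi^{-1}(m), m') \lambda_t(m', m))| \, dm'\, dm \leq v_A(\delta/2, t)^{1/2} (C_B t^{-a_B})^{1/2},
\]
uniformly in $j$. The symmetric argument, with roles of $A$ and $B$ exchanged, gives an analogous bound on $Y_j^2$ in terms of $v_B(\delta/2, t)^{1/2} (C_A t^{-a_A})^{1/2}$.

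The decisive point is the second property of $v$ in Theorem \ref{thm int outside ball}: at the fixed scale $r = \delta/2$, the function $t \mapsto t^{-a} v_A(\delta/2, t)$ is bounded on $(0,1]$ for every $a \geq 0$, so $v_A(\delta/2, t)$ decays faster than any polynomial in $t$; applying this with $a = a_B + 1$ makes the bound above $O(t^{1/2})$. Choosing $T > 0$ so that both bounds are $< \varepsilon/2$ for $t \in (0, T)$ completes the proof.

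The only subtle point is arranging the uniformity in $j$ and matching the two distinct ingredients of $\cA_{-\infty}^L(S)$-membership to the correct factors: the super-polynomial off-diagonal decay must land on the kernel that gets truncated by the ball of radius $\delta/2$, while the mildly singular global $L^2$ bound handles the other factor. Neither a volume-growth hypothesis on $M_j \setminus U_j$ nor a $U$-regular exhaustion property is invoked here; the estimate is purely local in $t$ and uses only the separation $\delta$ imposed by the choice of $U$.
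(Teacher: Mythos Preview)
Your proof is correct and follows essentially the same route as the paper: use the separation $d(m',\varphi(m'))\geq\delta$ for $m'\notin U$ together with the triangle inequality to force $d(m,m')\geq\delta/2$ or $d(\varphi^{-1}(m),m')\geq\delta/2$, then apply Cauchy--Schwarz, pairing the off-diagonal decay from Theorem~\ref{thm int outside ball} with the global $L^2$ bound from Definition~\ref{def AL}, and use the super-polynomial decay of $v(\delta/2,t)$ to absorb the $t^{-a}$ singularity. Your write-up is in fact slightly more careful than the paper's in keeping track of the square roots from Cauchy--Schwarz and in matching each off-diagonal bound to the kernel centred at the correct point.
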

\begin{proof}
Let $\delta>0$ be as in the second assumption on $U$ in Subsection \ref{sec exhaust}. 
If $(m,m') \in Y_j$, then $m' \not \in U$, so by the triangle inequality, we either have $d(m,m')\geq\delta/2$ or $d(\varphi^{-1}(m), m')\geq\delta/2$. So the left hand side of \eqref{eq est Yj} is at most equal to
\begin{multline}\label{eq est Yj 1}
 \frac{1}{\vol(U_j)}\int_{U_j} \int_{M \setminus B(m, \delta/2)} \|  \kappa_t(\varphi^{-1}(m), m')\|\| \lambda_t(m', m) \| \, dm' \, dm\\
 +
  \frac{1}{\vol(U_j)}\int_{U_j} \int_{M \setminus B(\varphi^{-1}(m), \delta/2)} \|  \kappa_t(\varphi^{-1}(m), m')\|\| \lambda_t(m', m) \| \, dm' \, dm.
\end{multline}
And for all $m \in M$,
\begin{multline*}
\int_{M \setminus B(m, \delta/2)} \|  \kappa_t(\varphi^{-1}(m), m')\|\| \lambda_t(m', m) \| \, dm' 
\\
\leq
\left(
\int_{M \setminus B(m, \delta/2)} \|  \kappa_t(\varphi^{-1}(m), m')\|^2\, dm'
\right)^{1/2}
\left(
\int_{M}\| \lambda_t(m', m) \|^2 \, dm'
\right)^{1/2}
\\
\leq
v_A(\delta/2, t)C_B t^{-a_B},
\end{multline*}
where $v_A$ is the function $v$ in Theorem \ref{thm int outside ball} for $A$, and $C_{B}$ and $a_B$ are the constants $C$ and $a$ in Definition \ref{def AL}, respectively,  for $B$. Via a similar estimate for the second term in \eqref{eq est Yj 1}, we find that \eqref{eq est Yj 1} is bounded above by
\[
v_A(\delta/2, t)C_B t^{-a_B}+v_B(\delta/2, t) C_{A} t^{-a_A}.
\]
This is independent of $j$, and goes to zero as $t \downarrow 0$. 
\end{proof}
\begin{remark}
The end of the proof of Lemma \ref{lem est Yj} is the main place where we use that the functions $\mu_l$ in Definition \ref{def asympt local}, and hence the function $v$ in Theorem \ref{thm int outside ball}, vanish to all orders in $t$ as $t \downarrow 0$. (We also use this in the proof of Lemma \ref{lem UkL alg}.)
\end{remark}

\subsection{Proof of Theorem \ref{thm asympt trace}}

\begin{lemma}\label{lem limits}
Consider maps $a\colon \N \times(0, \infty) \to \C$ and $b\colon (0, \infty) \to \C$.  For every $\varepsilon>0$, let $a_{1, \varepsilon},a_{2, \varepsilon} \colon \N \times(0, \infty) \to \C$ be such that $a = a_{1, \varepsilon}+a_{2, \varepsilon}$. Suppose that for all $\varepsilon>0$, 
\begin{enumerate}
\item there is a $T>0$ such that for all $t \in (0,T)$ and $j \in \N$, we have  $|a_{1, \varepsilon}(t,j)|<\varepsilon$;
\item for all $t \in (0,1]$, there is an $N>0$ such that for all $j \geq N$, we have $|a_{2, \varepsilon}(t,j)|<\varepsilon$;
\item for all $t>0$, 
\[\liminf_{j \to \infty} |b(t) - a(t, j)| = 0.
\]
\end{enumerate}
Then $\lim_{t\downarrow 0} b(t) = 0$.
\end{lemma}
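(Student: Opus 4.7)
The plan is to prove this by a direct $\varepsilon/3$ argument, using the three hypotheses in sequence: first to make the small-$t$ tail uniformly controlled, then to select a large enough index $j$ at fixed $t$, and finally to use the liminf hypothesis to replace $a(t,j)$ by $b(t)$.

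Fix $\varepsilon > 0$, and apply the decomposition hypothesis to the positive number $\varepsilon/3$ in place of $\varepsilon$, obtaining functions $a_{1,\varepsilon/3}, a_{2,\varepsilon/3}$ with $a = a_{1,\varepsilon/3} + a_{2,\varepsilon/3}$. Hypothesis (1) produces a $T > 0$ such that $|a_{1,\varepsilon/3}(t,j)| < \varepsilon/3$ for all $t \in (0,T)$ and all $j \in \N$. I claim that for every $t \in (0, \min(T, 1))$ one has $|b(t)| < \varepsilon$; this will establish $\lim_{t \downarrow 0} b(t) = 0$.

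So fix such a $t$. By hypothesis (2) applied to this $t$ and to the parameter $\varepsilon/3$, there exists $N \in \N$ such that $|a_{2,\varepsilon/3}(t,j)| < \varepsilon/3$ for every $j \geq N$. By hypothesis (3), $\liminf_{j \to \infty} |b(t) - a(t,j)| = 0$, so we may choose some $j_0 \geq N$ with $|b(t) - a(t, j_0)| < \varepsilon/3$. The triangle inequality then gives
\[
|b(t)| \leq |b(t) - a(t, j_0)| + |a_{1,\varepsilon/3}(t, j_0)| + |a_{2,\varepsilon/3}(t, j_0)| < \tfrac{\varepsilon}{3} + \tfrac{\varepsilon}{3} + \tfrac{\varepsilon}{3} = \varepsilon,
\]
as required.

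There is essentially no obstacle: the only subtlety is the order of quantifiers, namely that the same $\varepsilon/3$ decomposition must be used for both bounds (1) and (2), and that $j_0$ must simultaneously be large enough to make $a_{2,\varepsilon/3}(t, j_0)$ small and to make $|b(t) - a(t, j_0)|$ small, which is handled by first fixing $N$ from (2) and then selecting $j_0 \geq N$ via the liminf in (3).
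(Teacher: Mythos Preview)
Your proof is correct and follows essentially the same approach as the paper's own proof: fix $\varepsilon$, use (1) to get $T$, fix $t$ small, use (2) to get $N$, then use the liminf in (3) to find $j_0\geq N$ and apply the triangle inequality. The only cosmetic difference is that you apply the hypotheses with parameter $\varepsilon/3$ to obtain the bound $|b(t)|<\varepsilon$, whereas the paper applies them with parameter $\varepsilon$ and concludes $|b(t)|<3\varepsilon$.
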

\begin{proof}
Let $\varepsilon>0$. Let $T>0$ be as in the first point. Let $t \in (0,T) \cap(0,1]$. Let $N$ be as in the second point, for this value of $t$. By the third point, there is an $N'\geq 0$ such that for all $n \geq N'$, there is a $j \geq n$ such that $ |b(t) - a(t, j)|< \varepsilon$. Let $j \in \N$ have this property for  $n = \max(N, N')$. Then
\[
|b(t)| \leq |b(t) - a(t,j)| + |a_{1, \varepsilon}(t,j)|+|a_{2, \varepsilon}(t,j)| <3\varepsilon.
\]
\end{proof}

\begin{proof}[Proof of Theorem \ref{thm asympt trace}]
Let $A \in \cA_{-\infty}^L(S)^{\Phi}$ and  $B \in \cA_{-\infty}^L(S)$. As before, we write $\kappa_t$
for  the Schwartz kernel of $A(t)$, and  $\lambda_t$ for the Schwartz kernel of $B(t)$.
For $t>0$ and $j \in \N$, write
\begin{multline*}
a (t, j) := \\
\frac{1}{\vol(U_j)} \int_{U_j} 
\tr\left( \Phi \kappa_t(\varphi^{-1}(m), m')\lambda_t(m', m) - \Phi \lambda_t(\varphi^{-1}(m), m') \kappa_t(m', m) \right)\, dm' \, dm,
\end{multline*}
and
\[
b(t) :=  \Tr_{\Phi}^U (A(t)B(t) - B(t) A(t)). 
\]
Furhermore, if $\varepsilon>0$, then we choose $r>0$ as at the start of Subsection \ref{sec est int}, and use the sets \eqref{eq Vj} to write
\begin{multline*}
a_{1, \varepsilon} (t, j) := \\
\frac{1}{\vol(U_j)} \int_{Y_j} 
\tr\left( \Phi \kappa_t(\varphi^{-1}(m), m')\lambda_t(m', m) - \Phi \lambda_t(\varphi^{-1}(m), m') \kappa_t(m', m) \right)\, dm' \, dm,
\end{multline*}
and
\begin{multline*}
a_{2, \varepsilon} (t, j) := \\
\frac{1}{\vol(U_j)} \int_{V_j \cup W_j \cup X_j} 
\tr\left( \Phi \kappa_t(\varphi^{-1}(m), m')\lambda_t(m', m) - \Phi \lambda_t(\varphi^{-1}(m), m') \kappa_t(m', m) \right)\, dm' \, dm.
\end{multline*}

Then the first condition in Lemma \ref{lem limits} holds by Lemma \ref{lem est Yj} and the analogous statement with $\kappa_t$ and $\lambda_t$ interchanged. The second  condition in Lemma \ref{lem limits} holds by Lemmas \ref{lem est Vj}, \ref{lem est Wj} and \ref{lem est Xj},  and the versions of Lemmas \ref{lem est Wj} and \ref{lem est Xj}  with $\kappa_t$ and $\lambda_t$ interchanged. The third condition in Lemma \ref{lem limits} holds by Definition \ref{def associated}. So the claim follows by Lemma \ref{lem limits}.
\end{proof}
%
%

\subsection{The trace of the index of $D$}

We extend $\Tr^U_{\Phi}$ to matrices with entries in $\cA_{-\infty}^L(S)^{\Phi}$, by combining it with the matrix trace in the usual way.
\begin{corollary}\label{cor Tr idem}
Let $e,f \in M_{\infty}(\cA_{-\infty}^L(S)^{\Phi})$ be idempotents such that
\beq{eq diff Tr}
\lim_{t \downarrow 0} \bigl( \Tr_{\Phi}^U(e(t)) - \Tr_{\Phi}^U(f(t)) \bigr)
\eeq
converges. Let $e',f' \in M_{\infty}(\cA_{-\infty}^L(S)^{\Phi})$ be idempotents such that we have Murray--von Neumann equivalences $e \sim e'$ and $f \sim f'$. Then
\[
\lim_{t \downarrow 0} \bigl( \Tr_{\Phi}^U(e'(t)) - \Tr_{\Phi}^U(f'(t)) \bigr)
\]
converges, and equals \eqref{eq diff Tr}.
\end{corollary}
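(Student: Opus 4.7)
The plan is to reduce the corollary to two applications of the asymptotic trace property (Theorem \ref{thm asympt trace}), one for the equivalence $e \sim e'$ and one for $f \sim f'$.

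First, I would extend $\Tr^U_{\Phi}$ and Theorem \ref{thm asympt trace} to the matrix algebra $M_\infty(\cA_{-\infty}^L(S)^{\Phi})$. This is routine: applying the asymptotic trace property entrywise and summing finitely many vanishing limits shows that $\lim_{t \downarrow 0} \Tr^U_{\Phi}(A(t) B(t) - B(t) A(t)) = 0$ whenever $A \in M_\infty(\cA_{-\infty}^L(S)^{\Phi})$ and $B \in M_\infty(\cA_{-\infty}^L(S))$.

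Second, I would unpack the Murray--von Neumann equivalence $e \sim e'$. By definition, there exist $u, v$ in the unitization of $M_\infty(\cA_{-\infty}^L(S)^{\Phi})$ with $uv = e$ and $vu = e'$. Writing $u = u_0 + \alpha\, \mathbf{1}$ and $v = v_0 + \beta\, \mathbf{1}$ with $u_0, v_0 \in M_\infty(\cA_{-\infty}^L(S)^{\Phi})$ and $\alpha, \beta \in \C$, the scalar pieces cancel in the difference:
\[
e - e' = uv - vu = u_0 v_0 - v_0 u_0.
\]
Thus the difference is a genuine commutator of elements of the non-unital $\Phi$-equivariant algebra, evaluated pointwise in $t$. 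By the extended asymptotic trace property applied with $A = u_0$ (which is $\Phi$-equivariant) and $B = v_0$,
\[
\lim_{t \downarrow 0}\bigl( \Tr^U_{\Phi}(e(t)) - \Tr^U_{\Phi}(e'(t)) \bigr) = \lim_{t \downarrow 0} \Tr^U_{\Phi}\bigl(u_0(t) v_0(t) - v_0(t) u_0(t)\bigr) = 0.
\]
The same argument applied to $f \sim f'$ yields $\lim_{t \downarrow 0}\bigl( \Tr^U_{\Phi}(f(t)) - \Tr^U_{\Phi}(f'(t)) \bigr) = 0$.

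Third, I would combine these via the telescoping identity
\[
\Tr^U_{\Phi}(e'(t)) - \Tr^U_{\Phi}(f'(t)) = \bigl( \Tr^U_{\Phi}(e(t)) - \Tr^U_{\Phi}(f(t)) \bigr) + \bigl( \Tr^U_{\Phi}(e'(t)) - \Tr^U_{\Phi}(e(t)) \bigr) + \bigl( \Tr^U_{\Phi}(f(t)) - \Tr^U_{\Phi}(f'(t)) \bigr).
\]
The last two bracketed differences vanish in the limit by Step 2, while the first converges by hypothesis to \eqref{eq diff Tr}. Hence the left hand side converges to the same value, proving the corollary. The main subtle point is the non-unitality of $\cA_{-\infty}^L(S)^{\Phi}$, which a priori forces the equivalence witnesses $u, v$ to live in the unitization where $\Tr^U_{\Phi}$ is not defined; this is resolved cleanly by the observation that the scalar parts cancel in $uv - vu$, leaving a commutator of elements in the non-unital algebra to which Theorem \ref{thm asympt trace} applies directly.
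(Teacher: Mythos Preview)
Your proposal is correct and is precisely an unpacking of the paper's one-line proof (``This follows directly from Theorem \ref{thm asympt trace}''). One minor simplification: the detour through the unitization is unnecessary and, as written, slightly imprecise (in $M_\infty$ of the unitization the ``scalar'' part is a matrix in $M_\infty(\C)$, not a single $\alpha\in\C$, so $uv-vu=u_0v_0-v_0u_0$ need not hold literally); since $e,e'$ already lie in $M_\infty(\cA_{-\infty}^L(S)^{\Phi})$, you may replace any witnesses $u,v$ by $eue'$ and $e've$, which lie in the non-unital algebra and still satisfy $uv=e$, $vu=e'$, so $e-e'$ is directly a commutator in $M_\infty(\cA_{-\infty}^L(S)^{\Phi})$ and Theorem \ref{thm asympt trace} applies without further manoeuvring.
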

\begin{proof}
This follows directly from Theorem \ref{thm asympt trace}.
\end{proof}

Let $\widetilde{K}_0(\cA_{-\infty}^L(S)^{\Phi}) \subset K_0(\cA_{-\infty}^L(S)^{\Phi})$ be the subgroup of classes that can be represented by idempotents as in Corollary \ref{cor Tr idem}. Then by this corollary, we have a well-defined map
\[
\Tr_{\Phi}^U\colon \widetilde{K}_0(\cA_{-\infty}^L(S)^{\Phi}) \to \C
\]
given by
\[
\Tr_{\Phi}^U(x) := \lim_{t \downarrow 0} \bigl( \Tr_{\Phi}^U(e(t)) - \Tr_{\Phi}^U(f(t)) \bigr). 
\]
if $x = [e] - [f]$, with $e$ and $f$ as in Corollary \ref{cor Tr idem}.

Let $\gamma$ be the grading operator on $S$.
\begin{proposition}\label{prop index D K}
Suppose that $ \lim_{t \downarrow 0} \Tr_{\Phi}^U(\gamma e^{-tD^2})$ converges. Then
the class $\ind^L(D) \in K_0(\cA_{-\infty}^L(S)^{\Phi})$ in Definition \ref{def index K}  lies in $ \widetilde{K}_0(\cA_{-\infty}^L(S)^{\Phi})$. Furthermore, 
\beq{eq index D K}
\Tr_{\Phi}^U(\ind^L(D)) = \lim_{t \downarrow 0} \Tr_{\Phi}^U(\gamma e^{-tD^2}).
\eeq
\end{proposition}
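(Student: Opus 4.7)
The plan is to compute $\Tr^U_{\Phi}(\ind^L(D))$ by representing $\ind^L(D)$ with an explicit McKean--Singer-type idempotent pair and computing the functional on the difference directly. With respect to the grading $S = S^+ \oplus S^-$, write $D = \begin{pmatrix} 0 & D^- \\ D^+ & 0\end{pmatrix}$ and $\Delta^\pm = D^\mp D^\pm$, and set $g_t(x) = (1 - e^{-t^2 x})/x$. I would work with the operators on $S$ given, in grading block form, by
\[
e(t) := \begin{pmatrix} e^{-t^2 \Delta^+} & e^{-t^2 \Delta^+/2}\, g_t(\Delta^+)\, D^- \\ D^+ e^{-t^2 \Delta^+/2} & 1 - e^{-t^2 \Delta^-}\end{pmatrix}, \qquad q := \begin{pmatrix} 0 & 0 \\ 0 & 1\end{pmatrix}.
\]
A direct computation using the intertwining $D^{\pm} f(\Delta^{\pm}) = f(\Delta^{\mp}) D^{\pm}$ shows $e(t)^2 = e(t)$. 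The diagonal entries of $e(t) - q$ lie in $\cA_{-\infty}^L(S;D)$ by Proposition~\ref{prop pos j A} (applied with $j=0$); the entry $D^+ e^{-t^2\Delta^+/2}$ is a grading component of $D e^{-(t/\sqrt{2})^2 D^2}$, which lies in $\cA_{-\infty}^L(S;D)$ by Proposition~\ref{prop pos j A} with $j = 1$; and the $(1,2)$ entry equals $e^{-t^2\Delta^+/2} Q^-(t)$, which lies in $\cA_{-\infty}^L(S;D)$ by Lemma~\ref{lem D Q} and the ideal property of Lemma~\ref{lem ideal AD}. Both $e(t)$ and $q$ lie in $\cU^L(S;D)$, and $e(t) - q \in \cA_{-\infty}^L(S;D) \subset \cA_{-\infty}^L(S)^{\Phi}$.

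The next step is to identify $[e(t)] - [q] = \ind_D^L(D)$ in $K_0(\cA_{-\infty}^L(S;D))$, whence its image under $\iota_*$ is $\ind^L(D)$. With $u = D^+$ and $v = Q^-(t)$, the $(1,2)$-block of the path $Q(t) = (1-e^{-t^2 D^2})/D$ from Lemma~\ref{lem D Q}, one computes $1 - uv = e^{-t^2 \Delta^-}$ and $1 - vu = e^{-t^2 \Delta^+}$. The standard Milnor/Wassermann formula for the algebraic boundary map $\partial\colon K_1(\cU^L(S;D)/\cA_{-\infty}^L(S;D)) \to K_0(\cA_{-\infty}^L(S;D))$ applied to $[D^+]$ with this lift produces precisely the class $[e(t)] - [q]$.

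The trace computation is then immediate. Since $\Tr^U_{\Phi}$ of an operator on $S$ in block form with respect to the grading is the sum of the traces of the diagonal blocks, the off-diagonal entries of $e(t) - q$ contribute nothing and
\[
\Tr^U_{\Phi}(e(t)) - \Tr^U_{\Phi}(q) = \Tr^U_{\Phi}(e^{-t^2 \Delta^+}) - \Tr^U_{\Phi}(e^{-t^2 \Delta^-}) = \Tr^U_{\Phi}(\gamma e^{-t^2 D^2}).
\]
Under the hypothesis that $\lim_{t \downarrow 0} \Tr^U_{\Phi}(\gamma e^{-t D^2})$ converges, the reparameterization $s = t^2$ shows the same limit exists for the right-hand side above. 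The pair $(e, q)$ thus satisfies the hypothesis of Corollary~\ref{cor Tr idem}, so $\ind^L(D) \in \widetilde{K}_0(\cA_{-\infty}^L(S)^{\Phi})$ and \eqref{eq index D K} holds.

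The hard part will be the $K$-theoretic identification in the second step: the class $[D^+]$ is described in Definition~\ref{def index K} only indirectly via Proposition~\ref{prop D Fred} (which invokes invertibility of the full $D$), so one must carefully match the formal $2 \times 2$ amplification intrinsic to the algebraic boundary map with the $\Z/2\Z$-grading of $S$ already built into the geometric setup. Once this is in place, the first and third steps reduce to direct algebraic manipulations with the functional calculus of $D$.
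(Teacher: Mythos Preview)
Your proposal is correct and follows essentially the same approach as the paper: you construct the identical Connes--Moscovici idempotent $e(t)$ and the comparison idempotent $q$ (the paper calls it $f$), identify $[e]-[q]$ with $\ind^L(D)$ via the standard boundary-map formula (the paper cites \cite{CM90}), and read off the trace from the diagonal blocks. Your treatment is in fact somewhat more detailed than the paper's, which simply asserts the boundary-map identification and the extension of $\Tr^U_\Phi$ to the unitisation; your caution in the final paragraph about matching the formal $2\times 2$ amplification with the built-in grading is well placed but is exactly the standard bookkeeping the paper absorbs into its citation.
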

\begin{proof}
Let 
\[
\begin{split}
e(t) &:= \begin{pmatrix}
e^{-t^2 D^- D^+} & e^{-\frac{t^2}{2} D^- D^+} \frac{1-e^{-t^2 D^- D^+}}{D^- D^+} D^- \\
e^{-\frac{t^2}{2} D^+ D^-} D^+ & 1-e^{-t^2 D^+ D^-}
\end{pmatrix}; \quad \text{and}\\
f(t) &=  \begin{pmatrix}
 0&0\\
 0&1
 \end{pmatrix}.
\end{split}
\]
Here $D^+$ is the restriction of $D$ to even-graded sections, and  $D^-$ is its restriction  to odd-graded sections. The entries of $e$  lie in the unitisation of $\cA_{-\infty}^L(S; D)$ by Proposition \ref{prop pos j A} and Lemmas \ref{lem ideal AD} and \ref{lem D Q}. 

By a standard form of the boundary map \eqref{eq bdry K} (see e.g.\ page 356 of \cite{CM90}), 
\[
\ind^L(D)  = [e] - [f].
\]
Here we use the parametrix $Q$ of $D$ from Proposition \ref{prop D Fred}.
Extending $\Tr_{\Phi}^U$ to the unitisation of $\cA_{-\infty}^L(S; D)$ in the usual way, by defining it to map the unit element to $0$, we see that \eqref{eq index D K} holds.
\end{proof}

\begin{definition}\label{def index}
If $\ind^L(D) \in \widetilde{K}_0(\cA_{-\infty}^L(S)^{\Phi})$, then
the \emph{localised $\Phi$-index} of $D$ is the number
\[
\ind_{\Phi}^U(D) := 
\Tr_{\Phi}^U(\ind^L(D) ).
\]
\end{definition}


Proposition \ref{prop inv index zero} has an important consequence.
\begin{corollary}
If $D$ is invertible, then $\ind_{\Phi}^U(D) =0$.
\end{corollary}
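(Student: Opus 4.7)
The plan is to reduce the claim directly to Proposition \ref{prop inv index zero}, since the localised $\Phi$-index is by definition the value of the functional $\Tr_{\Phi}^U$ on the $K$-theoretic index $\ind^L(D)$, and $\Tr_{\Phi}^U$ sends the zero class to zero.

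First, by Proposition \ref{prop inv index zero}, invertibility of $D$ implies $\ind^L(D)=0$ in $K_0(\cA_{-\infty}^L(S)^{\Phi})$. Next, one observes that the zero class lies in $\widetilde K_0(\cA_{-\infty}^L(S)^{\Phi})$: it is represented by the difference $[0]-[0]$ of the zero idempotent with itself, and the limit \eqref{eq diff Tr} for this pair of idempotents is trivially $0$, so the convergence hypothesis in Corollary \ref{cor Tr idem} is satisfied. Therefore $\ind^L(D)$ is in the domain of $\Tr_{\Phi}^U$, so $\ind_\Phi^U(D)$ is well-defined by Definition \ref{def index}.

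Finally, applying the functional $\Tr_{\Phi}^U\colon \widetilde K_0(\cA_{-\infty}^L(S)^{\Phi})\to \C$, which is additive (by Corollary \ref{cor Tr idem}) and sends $0$ to $0$, gives $\ind_\Phi^U(D) = \Tr_{\Phi}^U(\ind^L(D)) = \Tr_{\Phi}^U(0) = 0$. There is no real obstacle here; the only point requiring any care is the bookkeeping check that $0\in\widetilde K_0(\cA_{-\infty}^L(S)^{\Phi})$, which is immediate, so that the definition of $\ind_\Phi^U(D)$ indeed applies in the invertible case.
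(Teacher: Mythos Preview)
Your proof is correct and follows the same approach as the paper, which simply records this corollary as an immediate consequence of Proposition \ref{prop inv index zero} without spelling out the details. Your additional observation that $0\in\widetilde K_0(\cA_{-\infty}^L(S)^{\Phi})$ is a useful bit of bookkeeping that the paper leaves implicit.
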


\section{An index theorem}\label{sec index}

\subsection{The main result}

We will use some standard heat kernel asymptotics. We denote the Atiyah--Segal--Singer integrand associated to $D$ and $\Phi$ by $\AS_{\Phi}(D)$. See e.g.\ Theorem 6.16 in \cite{BGV} for a general expression. 
\begin{proposition}\label{prop asymp exp}
Let $\kappa_t$ be the Schwartz kernel of $e^{-tD^2}$. There is a function $R\colon (0, \infty) \to (0, \infty)$ such that $\lim_{t \downarrow 0}R(t) = 0$, and  for all
relatively compact open subsets $V \subset M$, the following hold.
\begin{enumerate}
\item[(a)] If there is a $\delta>0$ such that $d(\varphi(m), m)\geq \delta$ for all $m \in M$, then
\[
\left| 
\int_V \tr(\gamma \Phi \kappa_t(\varphi^{-1}(m), m))\, dm \right| \leq  \vol(V) R(t).
\]
%
%
\item[(b)] If $\varphi$ preserves an orientation on $M$, and is contained in a compact group of isometries of $M$, and  $\Phi$ is contained in a compact group of isometries of $S$, then
\beq{eq asymp exp}
\left| 
\int_V \tr(\gamma \Phi \kappa_t(\varphi^{-1}(m), m))\, dm - \int_{V^{\varphi} } \AS_{\Phi}(D) \right| \leq  \vol(V) R(t).
%
\eeq
%
\end{enumerate}
\end{proposition}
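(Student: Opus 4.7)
My plan has two parts. For (a), I would invoke the standard Gaussian off-diagonal heat-kernel estimate on bounded-geometry manifolds: there exist constants $C, a>0$, depending only on $(M, S)$, such that
\[
\|\kappa_t(m, m')\| \leq C t^{-\dim(M)/2} e^{-a\, d(m,m')^2/t}
\]
for all $m, m' \in M$ and $t \in (0, 1]$. This is Proposition 4.2 in \cite{CGRS14}, already used in the proof of Proposition \ref{prop pos j A}. Setting $m' = \varphi^{-1}(m)$ and using the hypothesis $d(\varphi^{-1}(m), m) = d(m, \varphi(m)) \geq \delta$, together with fibrewise boundedness of $\gamma$ and $\Phi$, yields $|\tr(\gamma \Phi \kappa_t(\varphi^{-1}(m), m))| \leq C' t^{-\dim(M)/2} e^{-a\delta^2/t}$. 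Integrating over $V$ gives the claim with $R(t) := C' t^{-\dim(M)/2} e^{-a\delta^2/t}$, which vanishes faster than any polynomial as $t \downarrow 0$.

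For (b), fix $\varepsilon>0$ small enough that $W_\varepsilon := \{m \in M : d(m, M^\varphi) < \varepsilon\}$ is a uniform tubular neighbourhood of $M^\varphi$ (available from bounded geometry and the compact-group hypothesis on $\varphi$), and decompose $\int_V = \int_{V \cap W_\varepsilon} + \int_{V \setminus W_\varepsilon}$. On $V \setminus W_\varepsilon$, I claim the displacement function $m \mapsto d(\varphi(m), m)$ admits a uniform lower bound $\delta_\varepsilon > 0$ on all of $M \setminus W_\varepsilon$: on a shell $W_{2\varepsilon} \setminus W_\varepsilon$ this follows from the linearization of $\varphi$ on the normal bundle to $M^\varphi$, which has no fixed vectors and is uniformly nondegenerate since $\varphi$ sits in a compact isometry group; far from $M^\varphi$ the uniform bound combines bounded geometry with compactness of the ambient isometry group. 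Part (a) applied with $\delta_\varepsilon$ in place of $\delta$ then bounds this piece by $\vol(V) R_1(t)$ for a super-polynomially small $R_1$, independent of $V$.

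On $V \cap W_\varepsilon$, I would invoke the Getzler rescaling argument leading to Theorem 6.16 in \cite{BGV}. In geodesic normal coordinates $(x, y)$ transverse to $M^\varphi$, rescaling the normal direction by $\sqrt{t}$ sends $\tr(\gamma \Phi \kappa_t(\varphi^{-1}(\cdot), \cdot))\, dm$ to a twisted Mehler kernel converging, as $t \downarrow 0$, to the Atiyah--Segal--Singer density $\AS_\Phi(D)$ along $M^\varphi$; the orientation and compact-group hypotheses ensure the normal-bundle contribution $\det(1 - \Phi e^{-R})^{-1/2}$ in $\AS_\Phi(D)$ is well defined and uniformly bounded. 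Integrating and using uniform control on the remainder (ensured by bounded geometry of $(M, S)$ together with $\Phi$, $\varphi$ lying in compact groups of isometries) gives
\[
\int_{V \cap W_\varepsilon} \tr(\gamma \Phi \kappa_t(\varphi^{-1}(m), m))\, dm = \int_{V^\varphi} \AS_\Phi(D) + \vol(V) R_2(t)
\]
with a universal $R_2(t) \to 0$. Combining with $R := R_1 + R_2$ then proves (b).

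The main obstacle is \emph{uniformity}: in the compact-manifold case both the displacement lower bound on $M \setminus W_\varepsilon$ and the Getzler rescaling error on $W_\varepsilon$ are automatic, but here $R$ must be independent of $V$, so I must combine bounded geometry of $(M, S)$ with the compact-group hypotheses on $\varphi$ and $\Phi$ to extract $V$-independent constants throughout. Once those uniformity statements are established, the rest is a direct assembly of standard local heat-kernel asymptotics in the style of \cite{BGV}.
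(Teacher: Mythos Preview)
Your proposal is correct and follows essentially the same approach as the paper: Gaussian off-diagonal bounds from \cite{CGRS14} for part (a), the equivariant heat kernel asymptotics of \cite[Theorems 6.11, 6.16]{BGV} for part (b), and bounded geometry together with the compact-group hypotheses to obtain the $V$-independent remainder. The paper's own proof is considerably terser---it black-boxes the BGV asymptotics to get a $V$-dependent $R$ and then invokes Section 2 of \cite{Roe88I} for the passage to a uniform $R$---whereas you have unpacked the tube/complement decomposition that lies inside those references; but the underlying argument is the same.
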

\begin{proof}
Part (a) follows from the global Gaussian upper bound for heat kernels in manifolds of bounded geometry that was also used in the proof of Proposition \ref{prop pos j A}.

Now suppose that $\varphi$ and $\Phi$ lie in compact groups, and that $\varphi$ preserves an orientation. 
Then it follows from standard heat kernel asymptotics that for every relatively compact open subset $V \subset M$, there is  a function $R$ with the property in part (b), possibly depending on $V$.  See Theorems 6.11 and 6.16 in \cite{BGV}, where one uses that the asymptotic expansion of the heat kernel holds with respect to supremum norms on compact sets.
The fact that $R$ may be chosen independent of $V$ follows from bounded geometry. This implies that the  heat kernel $\kappa_t$ and its derivatives are uniformly bounded, which implies the desired independence of $V$ as in Section 2 of \cite{Roe88I}. Compare also Proposition 4.2 in \cite{CGRS14}. 
\end{proof}

In the non-equivariant setting of \cite{Roe88I}, the relevant trace of $\gamma e^{-tD^2}$ is independent of $t$ by Proposition 8.1 in \cite{Roe88I}. In the current situation, we need to take the limit $t \downarrow 0$ into account explicitly. This leads to an additional assumption.
\begin{proposition}\label{prop index limit}
\begin{enumerate}
\item[(a)] If there is a $\delta>0$ such that $d(\varphi(m), m)\geq \delta$ for all $m \in M$, then 
\[
\lim_{t\downarrow 0} \Tr_{\Phi}^U(\gamma e^{-tD^2})  = 0.
\]
\item[(b)] Suppose that $\varphi$ preserves an orientation on $M$, and that $\varphi$ and $\Phi$ lie in compact groups of isometries. 
Suppose that
\beq{eq limit AS}
\lim_{j \to \infty} \frac{1}{\vol(U_j)} \int_{U_j^{\varphi}} \AS_{\Phi}(D)
\eeq
converges. Then 
\[
\lim_{t\downarrow 0} \Tr_{\Phi}^U(\gamma e^{-tD^2}) 
\]
 converges to the same value as \eqref{eq limit AS}. 
 \end{enumerate}
%
\end{proposition}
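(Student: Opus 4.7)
The plan is to reduce both parts to Proposition \ref{prop asymp exp} combined with the defining property of the functional $I$. For each $t>0$, let $\alpha_t \in |\Omega_b|(M)$ be the density
\[
\alpha_t(m) = \tr(\gamma \Phi \kappa_t(\varphi^{-1}(m), m))\, dm,
\]
so that $\Tr^U_\Phi(\gamma e^{-tD^2}) = \langle I, \alpha_t\rangle$. Each $U_j = U \cap M_j$ is relatively compact, since it lies in the compact set $M_j$, so Proposition \ref{prop asymp exp} applies with $V = U_j$.

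For part (a), with $V = U_j$ Proposition \ref{prop asymp exp}(a) gives
\[
\left| \int_{U_j} \alpha_t \right| \leq \vol(U_j)\, R(t),
\]
so $\left| \tfrac{1}{\vol(U_j)} \int_{U_j} \alpha_t \right| \leq R(t)$ for every $j$. Since $I$ is associated to $(U_j)_{j=1}^\infty$ in the sense of Definition \ref{def associated}, for any $\varepsilon>0$ there is a $j$ with $|\langle I, \alpha_t\rangle - \tfrac{1}{\vol(U_j)}\int_{U_j}\alpha_t| < \varepsilon$. Letting $\varepsilon \downarrow 0$ yields $|\langle I, \alpha_t\rangle| \leq R(t)$, and the conclusion follows from $R(t) \to 0$.

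For part (b), applying Proposition \ref{prop asymp exp}(b) with $V = U_j$ (noting $V^{\varphi} = U_j^\varphi$) and dividing by $\vol(U_j)$ yields
\[
\left| \frac{1}{\vol(U_j)}\int_{U_j}\alpha_t - \frac{1}{\vol(U_j)}\int_{U_j^\varphi} \AS_\Phi(D) \right| \leq R(t).
\]
Denote the value of \eqref{eq limit AS} by $L$. For any $\varepsilon>0$ and any $t>0$, by the $\liminf$ clause in Definition \ref{def associated} and the assumed convergence in \eqref{eq limit AS}, we can choose $j$ large enough that both $|\langle I, \alpha_t\rangle - \tfrac{1}{\vol(U_j)}\int_{U_j}\alpha_t| < \varepsilon$ and $|\tfrac{1}{\vol(U_j)}\int_{U_j^\varphi} \AS_\Phi(D) - L| < \varepsilon$ hold simultaneously. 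Combining with the displayed inequality and using the triangle inequality gives $|\langle I, \alpha_t\rangle - L| \leq R(t) + 2\varepsilon$; letting $\varepsilon \downarrow 0$ and then $t \downarrow 0$ completes the proof.

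The only subtle point is that the estimates of Proposition \ref{prop asymp exp} are uniform in $V$, which is essential: without the uniformity of $R$ we could not divide by $\vol(U_j)$ (which may grow unboundedly) and retain a bound tending to zero in $t$. This uniformity is exactly what Proposition \ref{prop asymp exp} supplies via the bounded geometry assumption, so both parts now follow by a straightforward combination of that bound with the associated-functional property of $I$.
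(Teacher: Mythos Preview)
Your proof is correct and follows essentially the same approach as the paper: apply Proposition \ref{prop asymp exp} with $V = U_j$, divide by $\vol(U_j)$, and combine the resulting uniform-in-$j$ bound with the $\liminf$ property of the associated functional and (in part (b)) the assumed convergence of the Atiyah--Segal--Singer averages. Your organization is slightly different---you keep $R(t)$ explicit and let $\varepsilon \downarrow 0$ before $t \downarrow 0$, whereas the paper absorbs $R(t)$ into $\varepsilon$ up front by choosing $T$ with $R(t) < \varepsilon$---but the underlying argument is the same.
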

\begin{proof}
Let $R$ be as in Proposition \ref{prop asymp exp}.
Let $\varepsilon>0$, and let $T>0$ be such that $R(t)< \varepsilon$ for all $t \in (0, T)$. Fix $t \in (0,T)$.

In case (a), choose $j \in \N$ such that
\[
\left|  \Tr_{\Phi}^U(\gamma e^{-tD^2}) -  \frac{1}{\vol(U_j)}\int_{U_j} \tr(\gamma \Phi \kappa_t(\varphi^{-1}(m), m))\, dm  \right|<\varepsilon.
\]
Then by Proposition \ref{prop asymp exp}(a), with $V = U_j$, 
\[
|\Tr_{\Phi}^U(\gamma e^{-tD^2}) | < 2 \varepsilon.
\]

In case (b), Proposition \ref{prop asymp exp}(b) implies that
for all $j \in \N$, 
\beq{eq limit index 1}
\left| \frac{1}{\vol(U_j)}\int_{U_j} \tr(\gamma \Phi \kappa_t(\varphi^{-1}(m), m))\, dm -  \frac{1}{\vol(U_j)}  \int_{U_j^{\varphi} }  \AS_{\Phi}(D) \right|  < \varepsilon.
\eeq
%
%
Let $a$ be the value of the limit \eqref{eq limit AS}. 
Let $N \in \N$ be such that for all $j \geq N$, 
\beq{eq limit index 2}
\left|  \frac{1}{\vol(U_j)} \int_{U_j^{\varphi}} \AS_{\Phi}(D) - a\right|< \varepsilon.
\eeq
 By definition of $ \Tr_{\Phi}^U$, there is a $j\geq N$ such that 
\beq{eq limit index 3}
\left|  \Tr_{\Phi}^U(\gamma e^{-tD^2}) -  \frac{1}{\vol(U_j)}\int_{U_j} \tr(\gamma \Phi \kappa_t(\varphi^{-1}(m), m))\, dm  \right|<\varepsilon.
\eeq
The inequalities \eqref{eq limit index 1}--\eqref{eq limit index 3} for such a $j$ imply that
\[
\left|
\Tr_{\Phi}^U(\gamma e^{-tD^2}) -  
a
\right|
<3\varepsilon.
\]
\end{proof}

Our main result is the following index theorem, which follows from Propositions \ref{prop index D K} and \ref{prop index limit}.
\begin{theorem}\label{thm index}
\begin{enumerate}
\item[(a)] If there is a $\delta>0$ such that $d(\varphi(m), m)\geq \delta$ for all $m \in M$, then $\ind^L(D) \in \widetilde{K}_0(\cA_{-\infty}^L(S)^{\Phi})$, and
\[
\ind_{\Phi}^U(D)  = 0.
\]
\item[(b)] Suppose that  $\varphi$ preserves an orientation on $M$, and that $\varphi$ and $\Phi$ lie in compact groups of isometries. 
Suppose that
\beq{eq limit top index}
\lim_{j \to \infty} \frac{1}{\vol(U_j)} \int_{U_j^{\varphi}} \AS_{\Phi}(D) 
\eeq
converges. 
Then $\ind^L(D) \in \widetilde{K}_0(\cA_{-\infty}^L(S)^{\Phi})$, and
\beq{eq index}
\ind_{\Phi}^U(D) = \lim_{j \to \infty} \frac{1}{\vol(U_j)} \int_{U_j^{\varphi}} \AS_{\Phi}(D).
\eeq
\end{enumerate}
\end{theorem}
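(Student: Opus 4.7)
The plan is to combine Propositions \ref{prop index D K} and \ref{prop index limit} in a direct way; all the real work has already been done in those two results together with the asymptotic trace property (Theorem \ref{thm asympt trace}). The role of Theorem \ref{thm index} is to package these ingredients into a single statement about the localised $\Phi$-index.

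For part (a), I would first invoke Proposition \ref{prop index limit}(a), whose hypothesis is exactly the displacement bound $d(\varphi(m),m) \geq \delta$ that we are assuming. This gives
\[
\lim_{t \downarrow 0} \Tr_{\Phi}^U(\gamma e^{-tD^2}) = 0.
\]
In particular this limit converges, so Proposition \ref{prop index D K} applies: the class $\ind^L(D)$ lies in $\widetilde{K}_0(\cA_{-\infty}^L(S)^{\Phi})$ and
\[
\ind_{\Phi}^U(D) = \Tr_{\Phi}^U(\ind^L(D)) = \lim_{t \downarrow 0} \Tr_{\Phi}^U(\gamma e^{-tD^2}) = 0,
\]
which is what we wanted.

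For part (b), the argument is structurally identical but uses Proposition \ref{prop index limit}(b). Under the orientation and compact-group hypotheses, and assuming convergence of the geometric limit \eqref{eq limit top index}, Proposition \ref{prop index limit}(b) gives
\[
\lim_{t \downarrow 0} \Tr_{\Phi}^U(\gamma e^{-tD^2}) = \lim_{j \to \infty} \frac{1}{\vol(U_j)} \int_{U_j^{\varphi}} \AS_{\Phi}(D).
\]
In particular the left-hand side converges, so Proposition \ref{prop index D K} again applies to put $\ind^L(D)$ in $\widetilde{K}_0(\cA_{-\infty}^L(S)^{\Phi})$ and to identify $\Tr_{\Phi}^U(\ind^L(D))$ with the same $t \downarrow 0$ limit. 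Chaining these two equalities yields \eqref{eq index}.

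There is no genuine obstacle at this stage, since the substantive work sits in the earlier propositions: the definition of $\ind_{\Phi}^U(D)$ via the asymptotic trace (Theorem \ref{thm asympt trace} and Corollary \ref{cor Tr idem}), the identification of $\Tr_{\Phi}^U(\ind^L(D))$ with the heat-operator limit (using the standard parametrix/boundary-map computation in Proposition \ref{prop index D K}), and the heat-kernel asymptotics and bounded-geometry estimates used in Proposition \ref{prop index limit}. The only thing to double-check while writing is that convergence of the geometric limit in (b) is indeed what is needed to deduce membership in $\widetilde{K}_0(\cA_{-\infty}^L(S)^{\Phi})$; this is automatic because Proposition \ref{prop index D K} only requires convergence of $\lim_{t\downarrow 0} \Tr_{\Phi}^U(\gamma e^{-tD^2})$, and Proposition \ref{prop index limit}(b) delivers exactly that from the assumed convergence of \eqref{eq limit top index}.
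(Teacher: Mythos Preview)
Your proposal is correct and matches the paper's own proof, which simply states that the theorem follows from Propositions \ref{prop index D K} and \ref{prop index limit}. You have spelled out the chaining of these two results in both cases (a) and (b) exactly as intended.
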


\begin{remark}
Convergence of the limit \eqref{eq limit top index} depends on the choice of $U$, so it is possible that $U$ can be adapted so that this condition holds. 
\end{remark}

\begin{remark}\label{rem U=M}

In the case where $U = M$, 
\[
 \frac{1}{\vol(U_j)} \int_{U_j^{\varphi}} \AS_{\Phi}(D) =
\frac{1}{\vol(M_j)} \int_{M_j^{\varphi}} \AS_{\Phi}(D).  
\]
Because of bounded geometry, the integrand is bounded. Hence the absolute value of the  right hand side is less than or equal to a constant times
\[
\frac{\vol(M_j^{\varphi})}{\vol(M_j)}. 
\]
If  $\varphi$ is not the identity map, then 
 goes to zero as $j \to \infty$ in many cases, because $M^{\varphi}$ has lower dimension than $M$. In such cases, Theorem \ref{thm index} implies that $\ind_{\Phi}^M(D) = 0$. For this reason, it is important to allow $U$ to be different from $M$. Taking $U$ to be a tubular neighbourhood of $M^{\varphi}$ seems to be a natural choice.
\end{remark}

\subsection{Consequences and applications}

Assume from now on that  $\varphi$ preserves an orientation on $M$, and that $\varphi$ and $\Phi$ lie in compact groups of isometries.

Our main interest is in the case where the fixed-point set $M^{\varphi}$ is noncompact, but in the case where it is compact, we obtain a natural consequence of Theorem \ref{thm index}.
\begin{corollary}\label{cor index cpt}
Suppose that $M^{\varphi}$ and $\bar U$ are compact. 
Then $\ind^L(D) \in \widetilde{K}_0(\cA_{-\infty}^L(S))$, and
\[
\ind_{\Phi}^U(D) = \frac{1}{\vol(U)} \int_{M^{\varphi}} \AS_{\Phi}(D).
\]
\end{corollary}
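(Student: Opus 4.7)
The plan is to reduce Corollary \ref{cor index cpt} directly to Theorem \ref{thm index}(b) by checking that, under the compactness assumptions, the sequence on the right hand side of \eqref{eq limit top index} is eventually constant with the claimed value.

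First, since $(M_j)_{j=1}^{\infty}$ is an exhaustion of $M$ by compact subsets with $M_j \subset M_{j+1}$, and $\bar U$ is compact, a standard argument (every point of $\bar U$ lies in some $M_j$, and compactness plus nesting gives a uniform $j$) shows that there exists $N \in \N$ with $\bar U \subset M_j$ for all $j \geq N$. Because $U$ is a neighbourhood of $M^{\varphi}$ by assumption, this implies in particular that $M^{\varphi} \subset U \subset \bar U \subset M_j$ for all $j \geq N$. Consequently,
\[
U_j = U \cap M_j = U \quad \text{and} \quad U_j^{\varphi} = U^{\varphi} \cap M_j = M^{\varphi}
\]
for every $j \geq N$.

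Plugging these identifications into the quotient appearing in \eqref{eq limit top index}, the sequence
\[
\frac{1}{\vol(U_j)} \int_{U_j^{\varphi}} \AS_{\Phi}(D)
\]
is constant equal to $\frac{1}{\vol(U)} \int_{M^{\varphi}} \AS_{\Phi}(D)$ for $j \geq N$, so it trivially converges to that value. Applying Theorem \ref{thm index}(b) now gives $\ind^L(D) \in \widetilde{K}_0(\cA_{-\infty}^L(S)^{\Phi})$ together with the desired equality for $\ind_{\Phi}^U(D)$.

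There is essentially no obstacle here; the only mildly delicate point is verifying that $\bar U \subset M_j$ for $j$ large enough, which follows immediately from compactness of $\bar U$ and the exhaustion properties of $(M_j)$. All the analytic content, including the fact that the index lies in the subgroup $\widetilde{K}_0(\cA_{-\infty}^L(S)^{\Phi})$ on which $\Tr_{\Phi}^U$ is defined, is already packaged into Theorem \ref{thm index}(b).
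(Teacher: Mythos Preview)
Your proof is correct and follows exactly the same route as the paper: observe that compactness of $\bar U$ forces $U_j = U$ (and hence $U_j^{\varphi} = M^{\varphi}$) for all sufficiently large $j$, so the sequence in \eqref{eq limit top index} is eventually constant, and then invoke Theorem \ref{thm index}(b). The paper's own proof is the one-line version of what you wrote.
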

\begin{proof}
If $M^{\varphi}$ is compact, then $U_j = U$ for large enough $j$. So the claim follows from Theorem \ref{thm index}.
\end{proof}

Corollary \ref{cor index cpt} implies that the index defined in this paper generalises the one constructed in \cite{HW}. 
\begin{corollary}
Suppose that 
 $M^{\varphi}$ and $\bar U$ are compact. Then $\ind_{\Phi}^U(D)$ equals the index of Definition 2.7 in \cite{HW} divided by $\vol(U)$.
\end{corollary}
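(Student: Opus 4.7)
The plan is to deduce this from the previous corollary, Corollary \ref{cor index cpt}, together with the fixed-point index formula for the index defined in \cite{HW}. Under the compactness hypothesis, Corollary \ref{cor index cpt} already gives the explicit equality
\[
\ind_{\Phi}^U(D) = \frac{1}{\vol(U)} \int_{M^{\varphi}} \AS_{\Phi}(D),
\]
so it remains only to identify the right-hand side, up to the factor $1/\vol(U)$, with the index constructed in \cite[Definition 2.7]{HW}.

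First I would recall (or cite) the main Lefschetz-type theorem of \cite{HW}: in the setting where $M^{\varphi}$ is compact and $\varphi$ lies in a compact group of isometries, the index of \cite[Definition 2.7]{HW} is computed by the classical Atiyah--Segal--Singer integral $\int_{M^{\varphi}} \AS_{\Phi}(D)$. The hypotheses for this result are exactly the ones we are assuming here (together with the orientation hypothesis and the compact-group-of-isometries hypothesis already standing in this subsection), so no additional verification is needed.

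Combining the two formulas gives
\[
\ind_{\Phi}^U(D) = \frac{1}{\vol(U)} \int_{M^{\varphi}} \AS_{\Phi}(D) = \frac{1}{\vol(U)} \cdot (\text{index of \cite[Def.\ 2.7]{HW}}),
\]
which is the desired identity. The only subtlety I anticipate is book-keeping: one must check that the conventions on orientations of $M^{\varphi}$, on the normal-bundle contribution in $\AS_{\Phi}(D)$, and on the normalisation of the trace in \cite[Definition 2.7]{HW} agree with the ones used in this paper, so that the two occurrences of $\int_{M^{\varphi}} \AS_{\Phi}(D)$ really refer to the same number. Since both constructions ultimately invoke the same local heat-kernel asymptotics (as in Proposition \ref{prop asymp exp}(b)), this comparison is essentially a sign- and normalisation-check rather than a new argument.
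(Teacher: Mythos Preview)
Your proposal is correct and follows essentially the same approach as the paper: both invoke Corollary \ref{cor index cpt} and the main fixed-point formula of \cite{HW} (Theorem 2.16 there) to identify the two indices with the same topological integral $\int_{M^{\varphi}} \AS_{\Phi}(D)$, up to the factor $1/\vol(U)$.
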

\begin{proof}
In this setting, Corollary \ref{cor index cpt} in this paper and Theorem 2.16 in \cite{HW} imply that the two indices equal the same topological expression, up to a factor $1/\vol(U)$.
\end{proof}

The number $\ind_{\Phi}^U(D)$ also generalises the index from \cite{Roe88I}.
\begin{corollary}
Suppose that $\varphi = \Id_M$ and $\Phi = \Id_S$ are the identity maps on $M$ and $S$, respectively. Then $\ind^L(D) \in \widetilde K_0(\cA_{-\infty}^L(D)^{\Phi})$, and
\[
\ind_{\Id_S}^M(D) = \dim_{\tau}(\Ind(D)),
\]
where the right hand side is as in Section 7 and 8 of \cite{Roe88I}.
\end{corollary}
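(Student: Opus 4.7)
The plan is to apply Proposition \ref{prop index D K} directly and then identify the resulting value with Roe's via the index theorem of \cite{Roe88I}. Since $\varphi = \Id_M$ we have $M^{\varphi} = M$, which forces $U = M$ and $U_j = M_j$; the functional \eqref{eq def TrU} reduces to $\Tr_{\Id_S}^M(A) = \langle I, \tr(\kappa_A(m,m))\, dm\rangle$, which is precisely the trace $\tau$ that Roe defines on his algebra of smooth uniform smoothing operators.

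The first step is to verify convergence of $\lim_{t \downarrow 0} \Tr_{\Id_S}^M(\gamma e^{-tD^2})$, so that Proposition \ref{prop index D K} applies. By Proposition \ref{prop asymp exp}(b), with $\AS_{\Id_S}(D) = \AS(D)$ the classical Atiyah--Singer integrand, the density $m \mapsto \tr(\gamma \kappa_t(m,m))\, dm$ approximates $\AS(D)$ pointwise with error controlled by $R(t)$, tending to zero as $t \downarrow 0$ uniformly in the base point. Continuity of $I$ on $|\Omega_b|(M)$ then gives
\[
\lim_{t \downarrow 0} \Tr_{\Id_S}^M(\gamma e^{-tD^2}) = \langle I, \AS(D)\rangle,
\]
so $\ind^L(D) \in \widetilde K_0(\cA_{-\infty}^L(S)^{\Phi})$ and $\ind_{\Id_S}^M(D) = \langle I, \AS(D)\rangle$.

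The last step is to invoke Roe's index theorem (Section 8 of \cite{Roe88I}), which computes $\dim_\tau(\Ind(D)) = \langle I, \AS(D)\rangle$ via the McKean--Singer identity $\tau(\gamma e^{-tD^2}) = \dim_\tau(\Ind(D))$ (valid for every $t > 0$ because $\tau$ is a genuine trace in Roe's setting) combined with the same heat-kernel asymptotics. Chaining the two equalities yields the corollary. The principal obstacle is the bookkeeping required to confirm that our $K$-theoretic index $\ind^L(D)$ corresponds to Roe's analytic index $\Ind(D)$ under the natural comparison between the algebra of asymptotically local paths and Roe's algebra of uniform operators; but since both numerical indices are extracted from the same heat-kernel expression via the same functional $I$, this amounts to a consistency check on the $K$-theory boundary maps rather than a new analytic estimate.
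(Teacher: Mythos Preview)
Your argument is correct, but it takes a detour through the topological side that the paper avoids. The paper observes that once $\Tr_{\Id_S}^M$ is identified with Roe's trace $\tau$, Roe's McKean--Singer identity (Proposition 8.1 in \cite{Roe88I}) already gives
\[
\tau(\gamma e^{-tD^2}) = \dim_\tau(\Ind(D)) \quad \text{for every } t>0,
\]
so the limit as $t\downarrow 0$ trivially exists and equals $\dim_\tau(\Ind(D))$; Proposition~\ref{prop index D K} then finishes immediately. You instead show convergence by heat-kernel asymptotics (Proposition~\ref{prop asymp exp}(b)) to get $\lim_{t\downarrow 0}\tau(\gamma e^{-tD^2}) = \langle I, \AS(D)\rangle$, and then invoke Roe's full index theorem to identify $\langle I, \AS(D)\rangle$ with $\dim_\tau(\Ind(D))$. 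Both routes land in the same place; the paper's is shorter because the $t$-independence of $\tau(\gamma e^{-tD^2})$ makes any asymptotic analysis unnecessary.

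One small point: your appeal to Proposition~\ref{prop asymp exp}(b) for a \emph{pointwise} (sup-norm) bound is a slight overreach of the statement as written, which only gives the integrated estimate $\bigl|\int_V(\tr(\gamma\kappa_t) - \AS(D))\bigr|\le \vol(V)R(t)$. You do recover the pointwise bound by shrinking $V$ to a point (both densities are continuous, and bounded by bounded geometry), but it is worth saying so explicitly. Alternatively, you could argue exactly as in the proof of Proposition~\ref{prop index limit}(b), using the $\liminf$ definition of $I$ together with the integrated estimate directly, and skip the sup-norm step altogether.
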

\begin{proof}
Now $\Tr_{\Id_S}^M$ is the trace $\tau$ in Theorem 6.7 in \cite{Roe88I}. By Proposition 8.1 in \cite{Roe88I}, 
\[
\Tr_{\Id_S}^M(\Ind(D)) = \Tr_{\Id_S}^M(\gamma e^{-tD^2})
\]
for all $t>0$. In particular, $\lim_{t \downarrow 0}  \Tr_{\Id_S}^M(\gamma e^{-tD^2})$ converges. So the claim follows from Proposition \ref{prop index D K}.
\end{proof}

\begin{corollary}\label{cor inf vol}
If $U$ has infinite volume and
\[
\int_{M^{\varphi}} \AS_{\Phi}(D)
\]
converges, then $\ind_{\Phi}^U(D) = 0$.
\end{corollary}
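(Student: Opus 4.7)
The plan is to deduce this directly from Theorem \ref{thm index}(b), by showing both that the relevant limit exists and that it equals zero.

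First, since $U$ is a $\varphi$-invariant open neighbourhood of $M^{\varphi}$, I would note that $M^{\varphi} \subset U$, so $U^{\varphi} = M^{\varphi}$. Consequently
\[
U_j^{\varphi} = (U \cap M_j)^{\varphi} = M^{\varphi} \cap M_j,
\]
and because $(M_j)_{j=1}^{\infty}$ is an increasing exhaustion of $M$, the sets $U_j^{\varphi}$ form an increasing exhaustion of $M^{\varphi}$. The assumed convergence of $\int_{M^{\varphi}} \AS_{\Phi}(D)$ (interpreted, as usual in this setting, along any compact exhaustion) then yields
\[
\lim_{j \to \infty} \int_{U_j^{\varphi}} \AS_{\Phi}(D) = \int_{M^{\varphi}} \AS_{\Phi}(D) \in \C.
\]
In particular, the numerators in the right hand side of \eqref{eq index} form a bounded sequence.

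Second, since $U_j = U \cap M_j$ exhausts $U$, continuity of the Riemannian measure from below gives $\vol(U_j) \to \vol(U) = \infty$ by hypothesis. Combining these two observations,
\[
\lim_{j \to \infty} \frac{1}{\vol(U_j)} \int_{U_j^{\varphi}} \AS_{\Phi}(D) = 0.
\]
In particular, the limit \eqref{eq limit top index} in Theorem \ref{thm index}(b) converges, so the standing hypotheses of that theorem are satisfied, $\ind^L(D) \in \widetilde K_0(\cA_{-\infty}^L(S)^{\Phi})$, and
\[
\ind_{\Phi}^U(D) = 0.
\]

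There is essentially no technical obstacle: the corollary is a direct consequence of Theorem \ref{thm index}(b) once one observes that the standing assumptions of this subsection (orientation preservation, compactness of the groups containing $\varphi$ and $\Phi$) supply the needed input. The only mild point to be careful about is interpreting ``converges'' for the improper integral $\int_{M^{\varphi}} \AS_{\Phi}(D)$; taking it to mean convergence along the exhaustion $(M^{\varphi} \cap M_j)_{j=1}^{\infty}$ makes the argument above immediate.
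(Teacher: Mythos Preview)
Your proposal is correct and follows essentially the same approach as the paper: the paper's proof simply asserts that $\lim_{j \to \infty} \frac{1}{\vol(U_j)} \int_{U_j^{\varphi}} \AS_{\Phi}(D) = 0$ and then implicitly invokes Theorem~\ref{thm index}(b), while you spell out the two ingredients behind that assertion (bounded numerators from convergence of the integral, denominators tending to infinity).
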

\begin{proof}
In this case, 
\[
\lim_{j \to \infty} \frac{1}{\vol(U_j)} \int_{U_j^{\varphi}} \AS_{\Phi}(D) = 0.
\]
\end{proof}


Finally, we obtain an obstruction to positive scalar curvature metrics. Note that the statement does not involve the algebras, asymptotic trace and indices defined in this paper; they are only used in its proof.
\begin{corollary}
If $M$ is $\Spin$ and has a complete Riemannian metric with uniformly positive scalar curvature, then for every isometry $\varphi \colon M \to M$ that has a lift $\Phi$ to the spinor bundle and lies in a compact group of isometries, the limit
\[
\lim_{j \to \infty} \frac{1}{\vol(U_j)} \int_{U_j^{\varphi}} \frac{\hat A(M^{\varphi})}{\det(1-\Phi e^{-R})^{1/2}}
\]
equals zero if it converges. Here $R$ is the curvature of the connection on the normal bundle of $M^{\varphi}$ induced by the Levi--Civita connection.
\end{corollary}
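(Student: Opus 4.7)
The plan is to apply Theorem \ref{thm index}(b) to the Spin Dirac operator and use a Lichnerowicz-type vanishing argument. Take $S$ to be the complex spinor bundle on $M$ with the standard Clifford action, connection induced by Levi--Civita, and $\Z/2\Z$-grading given by the chirality. Let $D$ be the associated Spin Dirac operator, and let $\Phi$ denote the chosen lift of $\varphi$ to $S$. Since $\varphi$ preserves the orientation (it admits a Spin lift) and lies in a compact group of isometries, and $\Phi$ consequently lies in a compact group of bundle isometries covering it, the hypotheses of Theorem \ref{thm index}(b) are met; convergence of the limit is exactly the assumption in the statement.

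The key analytic input is the Lichnerowicz formula
\[
D^2 = \nabla^* \nabla + \tfrac{1}{4} \kappa,
\]
where $\kappa$ is the scalar curvature of $M$. Uniform positivity means there is a constant $c>0$ with $\kappa \geq c$ everywhere, whence $D^2 \geq c/4$ as a self-adjoint operator on $L^2(S)$. In particular $0$ is not in the spectrum of $D$, so $D$ is invertible. By Proposition \ref{prop inv index zero}, this forces $\ind^L(D) = 0$ in $K_0(\cA_{-\infty}^L(S)^{\Phi})$.

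Under the hypotheses of Theorem \ref{thm index}(b), the class $\ind^L(D)$ automatically lies in $\widetilde K_0(\cA_{-\infty}^L(S)^{\Phi})$ and $\ind_{\Phi}^U(D) = \Tr_{\Phi}^U(\ind^L(D))$ is equal to the limit
\[
\lim_{j \to \infty} \frac{1}{\vol(U_j)} \int_{U_j^{\varphi}} \AS_{\Phi}(D).
\]
For the Spin Dirac operator, the Atiyah--Segal--Singer integrand at the fixed-point set takes the explicit form (see Theorem 6.16 in \cite{BGV})
\[
\AS_{\Phi}(D) = \frac{\hat A(M^{\varphi})}{\det(1 - \Phi e^{-R})^{1/2}},
\]
with $R$ the curvature of the induced connection on the normal bundle of $M^{\varphi}$. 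Since $\ind^L(D) = 0$ gives $\ind_{\Phi}^U(D) = 0$, the claimed limit vanishes.

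The argument is essentially a direct chaining of the structural results already proved. The only point requiring care is ensuring that the Spin Dirac operator genuinely fits the framework of this paper, i.e.\ that bounded geometry is compatible with the uniform positive scalar curvature hypothesis in the way the functional $\Tr_{\Phi}^U$ and the algebra $\cA_{-\infty}^L(S)^{\Phi}$ require; this is built into the standing assumptions of the paper and is unaffected by positivity of $\kappa$. Hence no new obstacle appears beyond identifying the Atiyah--Segal--Singer integrand in the Spin case, which is standard.
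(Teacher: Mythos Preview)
Your argument is correct and is essentially the same as the paper's: identify $\AS_{\Phi}(D)$ for the Spin Dirac operator, use the Lichnerowicz formula with uniformly positive scalar curvature to get invertibility of $D$, invoke Proposition \ref{prop inv index zero} to conclude $\ind^L(D)=0$, and then apply Theorem \ref{thm index}(b). The only minor remark is that orientation preservation of $\varphi$ and compactness of the group containing $\Phi$ are standing assumptions in this subsection rather than consequences of the existence of a Spin lift, so you should cite them as hypotheses rather than derive them.
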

\begin{proof}
If $D$ is the $\Spin$-Dirac operator, then
\[
\AS_{\Phi}(D) =  \frac{\hat A(M^{\varphi})}{\det(1-\Phi e^{-R})^{1/2}}.
\]
So the claim follows from the Lichnerowicz formula, Proposition \ref{prop inv index zero} and Theorem \ref{thm index}.
\end{proof}

\bibliographystyle{plain}

\bibliography{mybib}

\end{document}